\newtheorem{theorem}{Theorem}
\newtheorem{remark}{Remark}
\newtheorem{lemma}[theorem]{Lemma}
\newtheorem*{lemma*}{Lemma}
\newtheorem{proposition}[theorem]{Proposition}
\newtheorem{corollary}[theorem]{Corollary}
\newtheorem{df}[theorem]{Definition}
\DeclareMathOperator{\comp}{{Comp}}
\DeclareMathOperator{\incg}{Inc}
\DeclareMathOperator{\sing}{Sing}
\newcommand\mydownarrow{\mathord{\downarrow }}
\newcommand{\QQ}{{\mathbb Q}}
\newcommand{\NN}{{\mathbb N}}
\newcommand{\ZZ}{{\mathbb Z}}
\begin{document}

\title{The structure of interval orders with no infinite antichain}

\author [M. Pouzet]{Maurice Pouzet}%\thanks{*}
\address{Univ. Lyon, Universit\'e Claude-Bernard Lyon1, CNRS UMR 5208, Institut Camille Jordan, 43, Bd. du 11 Novembre 1918, 69622
Villeurbanne, France et Department of Mathematics and Statistics, University of Calgary, Calgary, Alberta, Canada}

\author[I. Zaguia]{Imed Zaguia*}\thanks{*The author is supported by the Canadian Defence Academy Research Program and NSERC}
\address{Department of Mathematics \& Computer Science,  Royal Military College,
P.O.Box 17000, Station Forces, Kingston, Ontario, Canada K7K 7B4}
\email{zaguia@rmc.ca}

\date{\today}

\keywords{(partially) ordered set; chain; antichain; module, clan, modular decomposition; Gallai decomposition, interval order; scattered; Hausdorff rank}
\subjclass[2020]{06A6, 06F15}

\begin{abstract}
We prove that if \(G=(V,E)\) is a nonprime graph with either no infinite
independent set or no infinite clique, then every vertex of \(G\) belongs to a
maximal strong module distinct from \(V\). In particular, \(G\) admits a
Gallai decomposition.

As a consequence, we obtain that every interval order \(P\) with no infinite
antichain admits a Gallai decomposition. That is, \(P\) is a lexicographical
sum of interval orders distinct from \(P\) indexed by either a chain, an antichain, or a
prime interval order.

Next, we prove that every prime interval order with no infinite antichain is
at most countable and does not embed a copy of the chain of
rational numbers.

Finally, for each countable ordinal \(\alpha\), we construct a well-quasi-ordered
prime interval order \(P_\alpha\) whose chain of maximal antichains has
Hausdorff rank \(\alpha\).
\end{abstract}

\maketitle

\section{Introduction}

This paper is about the structure of interval orders of any cardinality. Let us recall that an ordered set $P$ is an \emph{interval order} if $P$ is isomorphic to a subset $\mathcal J$ of the set $Int(C)$ of nonempty intervals of a chain $C$, ordered as follows: if $I, J\in Int(C)$, then

\begin{equation}
I<J \mbox{  if  } x<y  \mbox{  for every  } x\in I  \mbox{  and every  } y\in J.
\end{equation}

See \cite{fi} for a wealth of  information about interval orders. %Clearly, chains and antichains are interval orders.

In \cite{leblet-rampon}, the authors provided an inductive construction of finite interval orders. One of the main goals of this paper is to prove that interval orders with no infinite antichains can be constructed from prime interval orders with no infinite antichains using lexicographical sums.

Let $I$ be an ordered set such that $|I|\geq 2$ and let $\{P_{i}:=(V_i,\leq_i)\}_{i\in I}$ be a family of pairwise disjoint nonempty ordered sets
that are all disjoint from $I$. The \emph{lexicographical sum} $\displaystyle \sum_{i\in I} P_{i}$ is the ordered set defined on
$\displaystyle \cup_{i\in I} V_{i}$ by $x\leq y$ if and only if
\begin{enumerate}[(a)]
\item There exists $i\in I$ such that $x,y\in V_{i}$ and $x\leq_i y$ in $P_{i}$; or
\item There are distinct elements $i,j\in I$ such that $i<j$ in $I$,   $x\in V_{i}$ and $y\in V_{j}$.
\end{enumerate}

The ordered sets $P_{i}$ are called the \emph{components} of the lexicographical sum and the ordered set $I$ is the \emph{index set}.
If $I$ is a totally ordered set, then $\displaystyle \sum_{i\in I} P_{i}$ is called a \emph{linear sum}. At times we will also use the notation $\oplus_{i\in I}P_i$ to indicate a linear sum. On the other hand, if $I$ is an antichain, then $\displaystyle \sum_{i\in I} P_{i}$ is called a \emph{disjoint sum}.%  and will be denoted by $\bigoplus_{i\in I}P_i$ instead.

We should mention that the class of interval orders is not closed under lexicographical sums. Indeed, the disjoint sum of two chains having at least two elements is not an interval order even though chains (and antichains) are interval orders. Still, we will see that, by restricting the type of lexicographical sums, we will be able to generate all interval orders with no infinite antichains from prime interval orders with no infinite antichains.

We recall the notions of modules and prime binary relations.

\begin{def}\label{def:module}
Let  $R:=(V,\rho)$ be a binary relation, i.e., $\rho\subseteq V\times V$. A  \emph{module} in $R$ is any subset $A$ of $V$ such that  \[(x\rho\, a \Leftrightarrow x\rho\, a')  \; \text{and} \; (a\rho\, x \Leftrightarrow a'\rho\, x) \; \text{for all} \; a,a'\in A \;\text{and}\; x\notin A.\]
\end{def}

Sometimes in the literature, modules  are called \emph{intervals},  \emph{autonomous} or \emph{partitive sets}. The empty set, the singletons in $V$ and the whole set $V$ are modules and are called \textit{trivial}. If $R$ has no nontrivial module, it is called \emph{prime} or \textit{indecomposable}.

\noindent For example, if $R:=(V,\leq)$ is a chain, its modules  are the ordinary intervals of the chain. Hence, a chain on at least three vertices is not prime.

The notion of module  goes back to Fra\"{\i}ss\'e \cite{fraisse53, fraisse84} and Gallai  \cite{gallai}. A fundamental decomposition result of a binary relation into modules was obtained by Gallai \cite{gallai} in the finite case (see \cite{kelly85}, \cite{HarjuRozenberg}, \cite{courcelle-delhomme} and \cite{ille-woodrow-2009, ille-woodrow-2011} for further extensions).

Let $P=(V,\leq)$ be an ordered set. An element $x\in V$ is \emph{singular} if the set of elements of $V$ incomparable with $x$, is an antichain. Equivalently, $x$ is singular if and only if $x$ belongs to a unique maximal antichain in $P$. We denote by $\sing(P)$ the set of singular vertices of $P$. Notice that in general $\sing(P)$ could be empty. But if we impose a certain condition on an interval order $P$, then $P$ has at least one singular element (Lemma \ref{lem:minimal-element}). We will prove (see Lemma \ref{lem:singular}) that if $P$ is a prime interval order, then $(\sing(P),\leq)$ is a chain.

Let $IO^{<\omega}$ be the class of interval orders with no infinite antichains. Our first result is this

\begin{theorem}\label{thm:main-lex}Let $P$ be an ordered set. Then $P\in IO^{<\omega}$ if and only if $P$ is the lexicographical sum $\displaystyle \sum_{i\in Q} P_{i}$ where each $P_i\in IO^{<\omega}$ and $Q$ is either
\begin{enumerate}[$(1)$]
  \item a chain,
  \item a finite antichain, in which case all the $P_i$'s but at most one are finite antichains,
  \item a prime interval order belonging to \(IO^{<\omega}\) and each $P_i$ for $i\not \in \sing(Q)$ is a finite antichain.
\end{enumerate}
\end{theorem}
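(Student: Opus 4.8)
The plan is to prove both implications separately. The forward direction rests on the infinite Gallai decomposition supplied by the Courcelle--Delhomm\'e machinery, while the converse is a direct check that no $\mathbf{2}+\mathbf{2}$ (the disjoint union of two $2$-element chains, the obstruction to being an interval order) embeds in $P$ and that $P$ has no infinite antichain.

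\emph{Forward direction.} Let $P\in IO^{<\omega}$. An antichain of $P$ is exactly an independent set of the comparability graph $G(P)$, so $G(P)$ has no infinite independent set; hence, by the chain finiteness of the tree decomposition of $G(P)$ into robust modules (Courcelle--Delhomm\'e), $P$ admits a Gallai decomposition $P=\sum_{i\in Q}P_i$ where the $P_i$ are the maximal proper (robust) modules and the quotient $Q$ is a chain, an antichain, or a prime poset. Each $P_i$ is an induced subposet of $P$, hence lies in $IO^{<\omega}$, and $Q$ embeds into $P$ (pick a representative in each block), so $Q\in IO^{<\omega}$ as well. It remains to sharpen the last two cases. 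If $Q$ is an antichain it must be finite, since $\{x_i:i\in Q\}$ with $x_i\in P_i$ is an antichain of $P$; moreover at most one $P_i$ contains a $2$-chain, for two $2$-chains lying in distinct blocks $P_i,P_j$ (with $i\parallel j$) would form a $\mathbf{2}+\mathbf{2}$ in $P$, contradicting that $P$ is an interval order; the remaining $P_i$ are antichains, hence finite. If $Q$ is prime and $i\notin\sing(Q)$, then $\inc_Q(i)$ is not an antichain, so it contains comparable elements $j<k$; were $P_i$ to contain a $2$-chain $a<b$, then for $x\in P_j$, $y\in P_k$ we would have $x<y$ with $\{a,b\}$ and $\{x,y\}$ cross-incomparable, i.e.\ a $\mathbf{2}+\mathbf{2}$ in $P$ --- impossible; hence $P_i$ is a finite antichain. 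This is precisely the trichotomy $(1)$--$(3)$.

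\emph{Converse direction.} Now suppose $P=\sum_{i\in Q}P_i$ is of one of the three prescribed forms, and let $\pi\colon P\to Q$ be the block-index map. If $A\subseteq P$ is an antichain, then $\pi(A)$ is an antichain of $Q$ (elements lying in blocks $i<j$ would be comparable), hence finite in each case, and each $A\cap P_i$ is a finite antichain (either $P_i$ is a finite antichain, or $i\in\sing(Q)$ and $P_i\in IO^{<\omega}$); so $A$ is finite. For the interval-order property I verify $\mathbf{2}+\mathbf{2}\not\le P$. Suppose $\{a<b\}\sqcup\{c<d\}\hookrightarrow P$ with all four cross pairs incomparable, and write $i_a,i_b,i_c,i_d$ for the block indices. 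If both $2$-chains are split across blocks, then $i_a<i_b$, $i_c<i_d$, and $\{i_a,i_b\}$ is disjoint from and cross-incomparable to $\{i_c,i_d\}$, producing a $\mathbf{2}+\mathbf{2}$ inside $Q$ --- impossible whether $Q$ is a chain, a finite antichain, or an interval order. So some $2$-chain, say $a<b$, lies in a single block $P_i$, which is therefore not an antichain. In case $(1)$ the blocks of the chain $Q$ are intervals, so the cross-incomparabilities force all four elements into $P_i$, contradicting $P_i\in IO^{<\omega}$. In cases $(2)$ and $(3)$, $P_i$ not being an antichain forces $i\in\sing(Q)$; tracing the cross-incomparabilities shows that $c,d$ cannot straddle two blocks without either putting a $\mathbf{2}+\mathbf{2}$ in $Q$ or placing a comparable pair $i_c<i_d$ into $\inc_Q(i)$ (contradicting $i\in\sing(Q)$), and if $c,d$ share a block $P_j$ then $P_j$ is also a non-antichain, so $j\in\sing(Q)$, yet $i\parallel j$ while $\sing(Q)$ is a chain by Lemma~\ref{lem:singular} --- a contradiction. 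In every case $\mathbf{2}+\mathbf{2}\not\le P$, so $P$ is an interval order and $P\in IO^{<\omega}$.

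\emph{Where the difficulty lies.} The only substantive work is in the forward direction, and it is imported: establishing the chain finiteness of the robust-module tree decomposition of $G(P)$ (Courcelle--Delhomm\'e), which is what makes a well-defined top Gallai layer --- and hence the chain/antichain/prime trichotomy --- available in the infinite setting. Once the Gallai decomposition is granted, the sharpening of cases $(2)$ and $(3)$ and the entire converse are routine $\mathbf{2}+\mathbf{2}$-avoidance bookkeeping, the only non-elementary input being Lemma~\ref{lem:singular}.
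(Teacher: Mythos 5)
Your proposal is correct and takes essentially the same route as the paper: the forward direction is the Gallai decomposition extracted from the Courcelle--Delhomm\'e robust-module tree of the comparability graph (Proposition~\ref{prop:finite-chain} together with Corollary~\ref{cor:robust-lexico}), and the sharpening of cases (2)--(3) together with the converse is exactly the content of the paper's Lemma~\ref{lem:lexico-sum}, established by the same ${\rm \bf 2}\oplus{\rm \bf 2}$-avoidance case analysis and Lemma~\ref{lem:singular}. One cosmetic slip: in case (2) the quotient $Q$ is an antichain, so Lemma~\ref{lem:singular} does not make $\sing(Q)$ a chain there; two non-antichain blocks are instead excluded directly by the hypothesis that at most one $P_i$ fails to be an antichain.
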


A consequence is this.

\begin{corollary}
Every member of $IO^{<\omega}$ can be obtained from chains, finite antichains and prime members of $IO^{<\omega}$ by iterating lexicographical sums satisfying the restrictions imposed in the theorem.
\end{corollary}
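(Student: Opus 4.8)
The plan is to make the word ``obtained'' precise and then to reduce the statement to Theorem~\ref{thm:main-lex} together with the chain finiteness of the canonical decomposition underlying it. Let $\mathcal{C}$ be the least class of posets that contains every chain, every finite antichain and every prime member of $IO^{<\omega}$, and that is closed under the following rule: whenever $Q$ is a chain, a finite antichain, or a prime member of $IO^{<\omega}$, and $(P_i)_{i\in Q}$ is a family of members of $\mathcal{C}$ obeying the restriction attached to the corresponding case of Theorem~\ref{thm:main-lex} (for $Q$ a finite antichain, all the $P_i$ but at most one are finite antichains; for $Q$ prime, $P_i$ is a finite antichain whenever $i\notin\sing(Q)$), the lexicographical sum $\sum_{i\in Q}P_i$ lies in $\mathcal{C}$. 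By construction $\mathcal{C}$ is exactly the class of posets obtainable from chains, finite antichains and prime members of $IO^{<\omega}$ by iterating the lexicographical sums allowed by Theorem~\ref{thm:main-lex}, so the corollary amounts to the equality $\mathcal{C}=IO^{<\omega}$.

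I would establish $\mathcal{C}\subseteq IO^{<\omega}$ first, by induction on the way a member of $\mathcal{C}$ is generated: the generators lie in $IO^{<\omega}$, and if $P=\sum_{i\in Q}P_i$ is one of the admitted sums with every $P_i$ already known to be in $IO^{<\omega}$, then the implication of Theorem~\ref{thm:main-lex} asserting that every such sum lies in $IO^{<\omega}$ puts $P$ there.

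For the reverse inclusion, fix $P\in IO^{<\omega}$ and form its decomposition tree $T(P)$ into robust modules in the sense of Courcelle and Delhomm\'e. I would rely on four facts: (i) each node $v$ carries an induced suborder $P_v$, with $P_{\mathrm{root}}=P$, and $P_v\in IO^{<\omega}$ since an induced suborder of an interval order with no infinite antichain is again one; (ii) each leaf carries a singleton, hence a member of $\mathcal{C}$; (iii) at each internal node $v$ the children induce a partition of $P_v$ into modules whose quotient is a chain, a finite antichain, or a prime member of $IO^{<\omega}$, with the restriction of the matching case of Theorem~\ref{thm:main-lex} on which blocks are nontrivial --- so $P_v$ is the lexicographical sum, over that quotient, of the suborders carried by the children of $v$, and this sum is of the form allowed in $\mathcal{C}$; and (iv) $T(P)$ is chain finite because $P$ has no infinite antichain, which is the structural fact on which Theorem~\ref{thm:main-lex} itself rests. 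Given these, the relation ``is a proper descendant of'' on $T(P)$ is well founded by (iv), and I would run a well-founded induction along it: a leaf lies in $\mathcal{C}$ by (ii), and at an internal node $v$ the suborders carried by the children lie in $\mathcal{C}$ by the induction hypothesis, so $P_v\in\mathcal{C}$ by (iii). Applying this at the root yields $P\in\mathcal{C}$.

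The main obstacle is fact (iv): an arbitrary iteration of Theorem~\ref{thm:main-lex} need not terminate --- for instance one can peel a least element off $\omega$ again and again forever --- so the recursion must be run along the canonical robust-module tree, whose branches are finite precisely because there is no infinite antichain. That chain finiteness is the paper's main structural result and is in hand before this corollary; what is left here is the bookkeeping behind (i)--(iii), namely that induced suborders stay in $IO^{<\omega}$, that the leaves of $T(P)$ are singletons, and that each internal quotient is a chain, a finite antichain, or a prime member of $IO^{<\omega}$ with the prescribed constraint on nontrivial blocks --- all immediate or already part of the analysis behind Theorem~\ref{thm:main-lex}.
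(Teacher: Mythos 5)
Your proposal is correct and follows essentially the same route the paper intends: the corollary is left without an explicit proof there, the understood argument being exactly an iteration of Theorem~\ref{thm:main-lex} along the tree of robust modules, whose chain finiteness (Proposition~\ref{prop:finite-chain}, via Lemma~\ref{lem:strong-robust} guaranteeing that the components of each robust module are themselves robust) is what makes the recursion terminate. You correctly identify that termination is the only nontrivial point and supply the well-founded induction making it precise; nothing is missing.
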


The proof is presented in subsection \ref{subsection:interval-gallai}. Theorem \ref {thm:main-lex} is a consequence of the following result.

\begin{theorem}\label{cor:max-strong}If $G=(V,E)$ is a nonprime graph with either no infinite independent set or no infinite clique, then every vertex of $G$ is contained in a maximal strong module distinct from $V$. In particular, $G$ has a Gallai decomposition.
\end{theorem}

The proof is presented in section \ref{section:tree}.

Let $\mathcal{G}$ be the collection of graphs as follows. Every prime graph with no infinite independent set, every clique, and every finite independent set belong to  $\mathcal{G}$. The lexicographical sum of members of $\mathcal{G}$ indexed by any member of $\mathcal{G}$ belongs to $\mathcal{G}$. No other graphs belong to $\mathcal{G}$. %The following is a consequence of Proposition \ref{prop:finite-chain}.

\begin{corollary}The class $\mathcal{G}$ is the class of graphs with no infinite independent sets.
\end{corollary}

Let $P=(V,\leq)$ be an ordered set. Denote by $AM(P)_{\leq}$ the set $AM(P)$ of maximal antichains of $P$ ordered as follows. If $A,B\in AM(P)$:
\begin{equation}\label{eqordreinter}
A\leq_{AM} B \; \mbox{ if for every }\;  a\in A\;  \mbox{ there exists }\; b\in B \; \mbox{such that }\; a\leq b.
\end{equation}

It is easy to see that $\leq_{AM}$ is an order on $AM(P)$. Furthermore, if $P$ is an interval order, then $\leq _{AM}$ is a linear order.\\

Recall that an ordered set is \emph{scattered} if it does not embed the chain of rational numbers \(\QQ\). A characterization of scattered linear orders was obtained by Hausdorff \cite{hausdorff} in 1908 (see Theorem \ref{thm:hausdorff-scattered} of this paper), and rediscovered by Erd\H{o}s
and Hajnal \cite{eh63} in 1962.

In \cite{uri-bonnet}, the authors characterized the class of scattered ordered sets with no infinite antichain as the smallest class containing the
well-founded ordered sets with no infinite antichain and closed under the following operations:
\begin{enumerate}[(1)]
\item taking the dual;
\item lexicographical sums;
\item extensions.
\end{enumerate}

Here, an ordered set is \emph{well-founded} if it contains no chain order-isomorphic to the chain of negative integers, and
\(P=(V,\preceq)\) is an \emph{extension} of \(P=(V,\leq)\) if
\[
x\leq y \Longrightarrow x\preceq y.
\]

Let \(P=(V,\leq)\) be an ordered set. For \(a\in V\), let
\[
D(a):=\{x\in V:x<a\}
\qquad\text{and}\qquad
U(a):=\{x\in V:x>a\}.
\]
Scattered interval orders with no infinite antichains admit the following characterization.

\begin{theorem}\label{thm:scattered}Let $P$ be an interval order with no infinite antichains. The following propositions are equivalent.
\begin{enumerate}[$(i)$]
  \item $P$ is scattered.
  \item $AM(P)_{\leq}$ is a scattered chain.
  \item $(\{U(x) : x\in V\},\subseteq)$ is a scattered chain.
   \item $(\{D(x) : x\in V\},\subseteq)$ is a scattered chain.
  \item $P$ is isomorphic to a collection of intervals of some scattered chain.
\end{enumerate}
\end{theorem}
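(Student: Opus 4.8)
The plan is to prove the cycle of implications $(i)\Rightarrow(v)\Rightarrow(iii)\Leftrightarrow(iv)\Rightarrow(ii)\Rightarrow(i)$, exploiting the special structure of interval orders throughout. The central object is the linear order $AM_\leq(P)$, which is a chain precisely because $P$ is an interval order (as noted in the excerpt just before the statement), together with the two families $\{U(x):x\in V\}$ and $\{D(x):x\in V\}$ ordered by inclusion. For an interval order these families are linearly ordered by inclusion: indeed, the defining ``$2+2$-free'' property of interval orders is exactly equivalent to the statement that $\{D(x):x\in V\}$ is a chain under $\subseteq$ (and dually for $U$). So the content of the theorem is to track scatteredness back and forth between these three chains and the poset $P$ itself, and then to realize $P$ concretely inside a scattered chain.

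First I would treat $(v)\Rightarrow(i)$: a collection of intervals of a scattered chain $C$ embeds no copy of $\mathbb{Q}$, since a chain of length $\omega$ of strictly nested or strictly increasing intervals would produce a copy of the order type in question inside $C$; more carefully, any suborder of $P$ is again a collection of intervals of $C$, and if $\mathbb{Q}$ embedded in $P$ then one extracts from the endpoints of the corresponding intervals an embedding of $\mathbb{Q}$ or its reverse into $C$, contradicting scatteredness of $C$. Conversely, for $(i)\Rightarrow(v)$ I would use the canonical interval representation: map each $x\in V$ to the pair $(D(x), V\setminus U(x))$, or more precisely realize $P$ on the chain $C$ consisting of the Dedekind-type cuts determined by the $D(x)$'s and the complements of the $U(x)$'s, ordered by inclusion. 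The key point is that $C$ is built from $\{D(x):x\in V\}$ and $\{U(x):x\in V\}$, so once we know these are scattered chains, $C$ is scattered — a union/interleaving of finitely many (here two) scattered pieces indexed along a scattered chain is scattered by Hausdorff's theorem (Theorem \ref{thm:hausdorff-scattered}). Thus $(i)\Rightarrow(v)$ is really routed through $(iii)$ and $(iv)$, and the crux becomes: $P$ scattered $\Rightarrow$ $\{D(x):x\in V\}$ and $\{U(x):x\in V\}$ are scattered.

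For that crux and for the remaining equivalences I would set up explicit order-preserving or order-reversing maps between $AM_\leq(P)$, $(\{U(x)\},\subseteq)$ and $(\{D(x)\},\subseteq)$. There is a natural surjection from $V$ onto $AM_\leq(P)$ or onto each cut-family, and an embedding going the other way, so that scatteredness of any one of them forces scatteredness of the others; the equivalence $(iii)\Leftrightarrow(iv)$ is a duality statement. The implications $(ii)\Leftrightarrow(iii)$ hinge on relating a maximal antichain $A$ to the set $\bigcup_{a\in A}U(a)$ (or $\bigcap$, depending on orientation): for interval orders this assignment is an order isomorphism, or at worst finite-to-one, between $AM_\leq(P)$ and a cofinal/coinitial subfamily of $\{U(x):x\in V\}$. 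Finally, if any of these chains fails to be scattered, it contains a copy of $\mathbb{Q}$, and by choosing witnesses $x_q\in V$ realizing the corresponding nested $D$'s (or $U$'s) and invoking the absence of infinite antichains to guarantee the $x_q$ are pairwise comparable in the right way, one produces an embedding of $\mathbb{Q}$ into $P$, giving $\neg(i)$; this is the step where the hypothesis ``no infinite antichain'' is genuinely used.

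The main obstacle I anticipate is the careful bookkeeping in $(i)\Rightarrow(iii)$: from a copy of $\mathbb{Q}$ inside the chain $(\{D(x):x\in V\},\subseteq)$ one must extract an actual suborder of $P$ order-isomorphic to $\mathbb{Q}$, and the naive choice of representatives $x_q$ need not be a chain in $P$ — two elements with $D(x)\subsetneq D(y)$ need not satisfy $x<y$. Resolving this requires combining the interval-order axiom (which linearly orders the $D$'s and $U$'s) with the no-infinite-antichain hypothesis: among any infinite set of representatives there is an infinite chain, and one argues that the induced suborder still contains $\mathbb{Q}$ because densely ordered sets are preserved under passing to cofinal/coinitial dense subsets. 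Making this extraction uniform — so that it simultaneously handles the $D$-version, the $U$-version, and the $AM$-version — is the delicate part, and I would isolate it as a lemma: for $P\in IO^{<\omega}$, $P$ embeds $\mathbb{Q}$ if and only if one (equivalently each) of $AM_\leq(P)$, $(\{D(x)\},\subseteq)$, $(\{U(x)\},\subseteq)$ embeds $\mathbb{Q}$.
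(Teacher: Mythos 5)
Your overall architecture is reasonable, and the ``easy'' arrows are essentially those of the paper: $(v)\Rightarrow(i)$ by picking one point from each of the pairwise disjoint intervals representing a chain of $P$ (your first formulation via ``endpoints'' should be dropped, since intervals of a chain need not have endpoints, but your ``more careful'' version is exactly the paper's argument), $(iii)\Leftrightarrow(iv)$ by duality, and $(iv)\Rightarrow(v)$ via a cut representation (the paper uses Bogart's representation and explicitly patches its failure of injectivity on doubly critical pairs, a point you would also need to address).

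The genuine gap is in the crux you yourself isolate: showing that if $(\{D(x):x\in V\},\subseteq)$ (equivalently $AM_\leq(P)$) embeds $\mathbb{Q}$ then so does $P$. Your proposed resolution --- choose representatives $x_q$ with $D(x_q)$ strictly $\subseteq$-increasing along $\mathbb{Q}$, use the absence of infinite antichains to extract an infinite chain among them, and then argue that density survives the passage to a cofinal/coinitial subset --- does not work. Colouring a pair $q<r$ according to whether $x_q<x_r$ or $x_q$ is incomparable to $x_r$ (these are the only possibilities), Ramsey's theorem yields only an infinite homogeneous set, whose order type as a subset of $\mathbb{Q}$ may well be $\omega$; the corresponding chain in $P$ is then well-ordered, hence scattered, and no density is inherited. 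Nor can one upgrade to a homogeneous set of order type $\mathbb{Q}$: Sierpi\'nski's partition of the pairs of rationals shows that a $2$-colouring of $[\mathbb{Q}]^2$ need not admit a monochromatic subset of type $\mathbb{Q}$. So any correct extraction must exploit the interval-order structure itself, not merely the fact that the set of representatives contains no infinite antichain. The paper sidesteps the issue by deducing $(i)\Rightarrow(ii)$ from a theorem of Bonnet and Pouzet on chains of initial segments of posets with no infinite antichain, and then transfers scatteredness between $AM_\leq(P)$ and the $U$- and $D$-chains by the representative maps you describe. If you want a self-contained argument, the usable extra leverage is the standard representation of Lemma~\ref{lem:standar}: each point of $AM_\leq(P)$ is a finite maximal antichain and therefore lies in only finitely many of the intervals $C(x)$, and by the finite intersection (Helly) property any infinite pairwise-overlapping subfamily of $\{C(x):x\in V\}$ produces an infinite antichain of $P$ --- this is the mechanism behind Lemma~\ref{lem:overlapping} and the proof of Theorem~\ref{thm:prime-scattered} --- but this argument has to be written out; as it stands the key step of your plan is unproved.
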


Notice that in Theorem \ref{thm:scattered}, the implication
\[
P \text{ non-scattered } \Longrightarrow AM(P)_\le \text{ non-scattered}
\]
requires the assumption that \(P\) has no infinite antichain. Indeed, define an order relation $\leq$ on the set $\{0,1\}\times \QQ$ as follows: $(i,r)<(i',r')$ if and only if $i=0$ and $i'=1$ and $r<r'$ in $\QQ$. The ordered set $B(\QQ):=(\{0,1\}\times \QQ,\leq)$, called the \emph{incidence bipartite ordered set} of the chain $\QQ$, is a scattered interval order but $AM_{\leq}(B(\QQ))$ is not.\\

The proof of Theorem \ref{thm:scattered} is presented in section \ref{section:scattered}.

The previous theorem strongly restricts the structure of scattered interval orders. For prime interval orders, we obtain an even stronger conclusion.

\begin{theorem}\label{thm:prime-scattered}A prime interval order with no infinite antichains is at most countable and scattered.
\end{theorem}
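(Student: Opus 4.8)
The plan is to reduce the assertion to a property of a single chain attached to $P$, and then to play primeness against the absence of infinite antichains.

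\textbf{Reduction.} We may assume $|V|\ge 3$. Every $x\in V$ lies in a maximal antichain (Zorn's Lemma: a union of a chain of antichains is an antichain), and since $P$ has no infinite antichain every maximal antichain is finite; hence $V=\bigcup AM(P)$ is a union of finite sets and $|V|\le\aleph_0\cdot|AM(P)|$. Moreover $A\mapsto D(A):=\{y:y<a\text{ for some }a\in A\}$ is injective on $AM(P)$: as $P$ is an interval order the down-sets $D(a)$ ($a\in A$) are linearly ordered by $\subseteq$, so $D(A)=D(a^{*})$ for the $a^{*}\in A$ of largest down-set, whence $D(A)\in\mathcal D(P):=\{D(x):x\in V\}$; and $A$ is recovered as the set of minimal elements of $V\setminus D(A)$ (using maximality of $A$). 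Thus $|AM(P)|\le|\mathcal D(P)|$, and by Theorem~\ref{thm:scattered} ($(i)\Leftrightarrow(iv)$) $P$ is scattered iff $\mathcal D(P)$ is a scattered chain. So it suffices to prove that \emph{$\mathcal D(P)$ is a countable scattered chain.}

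\textbf{Contradiction via primeness.} Assume $\mathcal D(P)$ is not a countable scattered chain. If it is non-scattered it embeds $\QQ$; if it is uncountable one first obtains an uncountable chain inside $P$ by applying the Dushnik--Miller relation $\aleph_1\to(\aleph_1,\omega)^{2}$ to the partition ``comparable/incomparable in $P$'' of a transversal of the distinct down-sets, linearly ordered by $\subsetneq$ (the case of an infinite incomparable set being excluded by hypothesis). In either case, after routine reductions one finds a strictly increasing $\omega$-sequence of down-sets $D(x_{0})\subsetneq D(x_{1})\subsetneq\cdots$ approaching a cut $\xi$ of $\mathcal D(P)$ from below. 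Realize $P$ as a family of intervals of a chain $C$ — for instance $C=AM_\leq(P)$, with $x\mapsto[\alpha(x),\beta(x)]$ the interval of maximal antichains containing $x$, in which every point of $C$ lies in only finitely many of these intervals (no infinite antichain). Choosing for each $n$ a witness $w_{n}\in D(x_{n+1})\setminus D(x_{n})$, the right endpoints $\beta(w_{n})$ are trapped in the gaps that close down on $\xi$. The crux is now a dichotomy: \emph{either} infinitely many of the intervals of the $w_{n}$ (or infinitely many elements whose interval straddles the cut $\xi$) pass through a common point of $C$, yielding an infinite antichain of $P$ — contradiction; \emph{or} a cofinal subfamily of these intervals shrinks onto $\xi$, and then, using that $P$ has \emph{no nontrivial module}, one shows that this forces an infinite antichain among the $w_{n}$ after all, or else exhibits a nontrivial module built from the elements whose interval degenerates at $\xi$ — again a contradiction. (In the $\QQ$-case take $\xi$ a two-sided limit point; in the uncountable case a limit of countable cofinality.) This contradiction completes the proof.

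\textbf{Main obstacle.} The whole weight of the argument is in this last dichotomy: showing that for a \emph{prime} interval order the witnesses $w_{n}$ cannot be ``chained'' as they are in the non-prime interval orders $\sum_{q\in\QQ}A$ and $B(\QQ)$ (both non-scattered) or $\sum_{\alpha<\omega_1}A$ (uncountable), where $A$ denotes a two-element antichain. Primeness is essential precisely here: $\QQ$ itself shows scatteredness can fail without it and $\sum_{\alpha<\omega_1}A$ shows countability can fail without it, and in those examples the offending chain is manufactured out of exactly such ``chained'' configurations. So the technical heart is to convert the rigidity coming from the absence of nontrivial modules into overlaps among the $w_{n}$, overlaps that the absence of infinite antichains then forbids.
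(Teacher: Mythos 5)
Your reduction is fine (and close to what the paper does: by Lemma \ref{lem:standar} and Theorem \ref{thm:scattered} everything is pushed onto the chain $AM_\leq(P)$, equivalently $\mathcal D(P)$), but the proof stops exactly where the proof has to begin. The ``dichotomy'' you state --- either infinitely many of the intervals of the $w_n$ share a point of $C$, or a cofinal subfamily shrinks onto $\xi$ and then primeness ``forces an infinite antichain \dots or else exhibits a nontrivial module'' --- is the entire content of the theorem, and it is asserted rather than proved. You yourself flag this in your ``Main obstacle'' paragraph. As stated the dichotomy is not even obviously exhaustive, and the second horn is precisely the claim one must establish. For comparison, the paper's scatteredness argument makes this concrete in one lemma: if $I$ is an interval of $C$ whose Hausdorff quotient is $\QQ$ and \emph{no} interval in the representation overlaps $I$ (meets it without being contained in it), then $\check I=\{x:\varphi(x)\subseteq I\}$ is a nontrivial module --- the separation property of the standard representation gives $|\check I|\ge 2$ and $\check I\ne V$. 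Primeness therefore yields an overlapping interval, and iterating (always restricting to a sub-interval with dense Hausdorff quotient) produces infinitely many pairwise intersecting, pairwise distinct intervals, i.e.\ an infinite antichain. Nothing in your sketch substitutes for this lemma or its iteration.

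The gap is worse in the uncountable case, because there the ``$\omega$-sequence of down-sets approaching a cut $\xi$ of countable cofinality'' cannot carry the contradiction at all: the chain $\omega_1$ is itself an uncountable scattered interval order with no infinite antichain, every countable limit point of it is such a cut, and no contradiction arises --- what fails is primeness, and one must actually \emph{build} a nontrivial module from the $\omega_1$-chain. The paper does this with machinery your sketch never touches: the ideal $I=\mathord{\downarrow}A$ generated by an $\omega_1$-chain $A$ (obtained from Dushnik--Miller, as you say) is pure (Lemma \ref{lem:ideal-pure}), hence by the Assous--Pouzet theorem it is a lexicographical sum $\sum_{\alpha\in K}P_\alpha$ over a chain $K$ of type $\omega_1$; then one shows (Lemma \ref{lem:notprime}) that only finitely many elements of $I^+$ are incomparable to something in $I$ (they are minimal in $I^+$, hence form an antichain), and that $V\setminus(I\cup I^+)$ is an antichain totally incomparable to $I$, so a final segment $\bigcup_{\alpha\in\lambda}K_\alpha$ of the sum is a nontrivial module. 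Without an argument of this kind --- or a worked-out replacement for your second horn --- the proposal is a plan, not a proof.
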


The conclusion of Theorem~\ref{thm:prime-scattered} becomes false if the
assumption of having no infinite antichain is removed. Indeed, let
\(Q:=\QQ\oplus 1,\) that is, the chain of rational numbers with a largest element \(m\) added.

For each \(r\in \QQ\), add a new element \(q_r\) such that \(r<q_r,\) and \(q_r\) is incomparable with every element \(s\in Q\) satisfying \(r\leq s.\)
Furthermore, the elements \(q_r\) are pairwise incomparable. Next, add a new element \(q_m\) such that \(q_m<m,\) and \(q_m\) is incomparable with every element of \(\QQ\) and with every element \(q_r\) with \(r\in \QQ\). Finally, take the transitive closure of the resulting relation.

The resulting ordered set is a prime interval order, is not scattered, and contains an infinite antichain.

\medskip

Another important contribution of Hausdorff to the study of scattered linear
orders is the introduction of the Hausdorff rank (see \cite{fraissetr},
Chapter~6). The scattered chains of rank \(0\) are precisely the finite
chains, the chain of natural numbers \(\NN\), its dual \(\NN^*\), and the
chain of integers \(\ZZ\). Moreover, every scattered chain of nonzero rank can
be decomposed into a sum of scattered chains of strictly smaller rank (see
\cite{fraissetr}, Section~6.2.6, p.~171).

An ordered set \(P\) is \emph{well-quasi-ordered} if every nonempty subset of
\(P\) has finitely many minimal elements (this number being nonzero). Equivalently, \(P\) is
well-quasi-ordered if and only if it contains no chain order-isomorphic to the
chain of negative integers and no infinite antichain.

Our final result shows that prime interval orders may still exhibit arbitrary
countable scattered complexity.

\begin{theorem}\label{thm:main-rank}
For every countable ordinal \(\alpha\), there exists a well-quasi-ordered
prime interval order \(P_\alpha\) whose chain of maximal antichains has
Hausdorff rank \(\alpha\).
\end{theorem}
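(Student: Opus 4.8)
The plan is to construct the family $(P_\alpha)_{\alpha<\omega_1}$ by transfinite recursion on $\alpha$, keeping as invariants that $P_\alpha$ is a prime interval order, that $P_\alpha$ is well-quasi-ordered, and that $AM_{\leq}(P_\alpha)$ is a scattered chain of Hausdorff rank exactly $\alpha$. For the base case $\alpha=0$ one may take the explicit ``half-graph'' interval order, namely the poset on $\NN$ in which $m<n$ exactly when $n\geq m+2$: every antichain of it has at most two elements and it has no infinite descending chain, so it is well-quasi-ordered; a short case analysis on its modules shows it is prime; and its maximal antichains are exactly the pairs $\{n,n+1\}$, so $AM_{\leq}$ is order-isomorphic to $\NN$, a chain of rank $0$.

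For the recursion step the basic device is a \emph{linear sum with connectors}. At a successor $\alpha=\beta+1$ take countably many pairwise disjoint copies $Q_0,Q_1,\dots$ of $P_\beta$; at a limit $\alpha=\lambda$ fix a sequence $\alpha_n\nearrow\lambda$ and take one copy $Q_n$ of $P_{\alpha_n}$ for each $n$. The linear sum $\oplus_{n\in\NN}Q_n$ is already a well-quasi-ordered interval order, and its chain of maximal antichains is $\oplus_{n\in\NN}AM_{\leq}(Q_n)$, which by the computation of the Hausdorff rank of $\NN$-indexed sums has rank $\beta+1$ (respectively $\lambda$); the choice of an $\NN$-indexed sum, rather than for instance a $\ZZ$-indexed one, is precisely what keeps the sum well-quasi-ordered. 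This sum is not prime, so one adjoins extra ``connecting'' elements, each incomparable to only a finite antichain of $\bigcup_n Q_n$ and placed cofinally, whose task is to destroy every module. Using the characterization of interval orders by the absence of an induced $2+2$, one checks that adjoining a connector creates no $2+2$, so $P_\alpha$ is still an interval order; since every antichain of $P_\alpha$ still meets a single $Q_n$ or lies inside one connector's finite incomparability set, and every element still has only finitely many predecessors outside its own copy, $P_\alpha$ is still well-quasi-ordered; and since the connectors perturb the chain of maximal antichains only boundedly, $AM_{\leq}(P_\alpha)$ still has the prescribed rank.

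The hard part is showing $P_\alpha$ is prime. Let $M$ be a module with $|M|\geq 2$. If $M$ meets some copy $Q_n$ in at least two points, then $M\cap Q_n$ is a module of $Q_n$, so by the inductive hypothesis $Q_n\subseteq M$; an adjacent connector is then related non-uniformly to $Q_n$ (incomparable to part of it, comparable to the rest), which forces that connector into $M$, and iterating this one pushes $M$ up to all of $V(P_\alpha)$. If instead $M$ meets every $Q_n$ in at most one point, $M$ is ``spread out'', and the danger is exactly the convex modules carried by an unadorned linear sum — the initial segments $\bigcup_{n<k}Q_n$ and their complements — which have to be broken by producing a connector lying strictly between the bottom and the top of $M$ that is incomparable to one element of $M$ while comparable to another. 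Arranging the number, positions and incomparability sets of the connectors so that this argument succeeds simultaneously for every candidate $M$, without spoiling either the $2+2$-freeness or the well-quasi-order, is the technical core of the proof; the tension to be resolved is that finiteness of all antichains severely restricts how far any single new element may ``straddle'', yet indecomposability requires the sum to be thoroughly tied together. Once primality is verified the three invariants are re-established, and the recursion proceeds through all countable ordinals, yielding the desired family $P_\alpha$.
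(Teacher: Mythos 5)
Your architecture is the same as the paper's: start from the transitive orientation of the complement of the one-way infinite path (your ``half-graph'' is exactly the paper's $I_\NN$), and at each stage form an $\NN$-indexed linear sum of previously constructed posets and adjoin connecting elements to kill all modules. The base case and the rank bookkeeping are essentially fine (the paper runs the induction over all well-founded countable scattered chains $C$, decomposed via Theorem \ref{thm:countable-hausdorff-rank-scattered}, rather than over ranks, but your variant works).

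The gap is exactly where you place it: the connectors are never defined, and the verifications of primality, of ${\rm \bf 2}\oplus{\rm \bf 2}$-freeness, and of the identification of $AM_\leq$ are never carried out --- you only state what the connectors would need to accomplish. Moreover, the one concrete property you do ascribe to them, that each connector be ``incomparable to only a finite antichain of $\bigcup_n Q_n$,'' does not describe a construction that works. In the paper's Lemma \ref{lem:basi-construction} one inserts a single element $y_i$ with $x_i<_Q y_i<_Q x_{i+1}$, where $x_{i+1}$ is a \emph{minimal} element of $P_{i+1}$ whose up-set is maximal among up-sets of minimal elements; Lemma \ref{lem:minimal-element} then gives $U(x_{i+1})=V_{i+1}\setminus Min(P_{i+1})$, which is what keeps the transitive closure an interval order. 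After closure, $y_i$ is incomparable to all of $V_i\setminus\{x_i\}$ --- an infinite set that is not an antichain --- and this is essential: it is precisely what destroys the module $V_i$ and the initial-segment modules $\bigcup_{j\leq i}V_j$. A connector whose incomparability set is a finite antichain inside one block is comparable to a cofinite subset of that block, hence must sit above an initial segment and below the complementary final segment of it; showing this can be arranged without creating a ${\rm \bf 2}\oplus{\rm \bf 2}$, while still breaking every module of the linear sum, is the load-bearing step, and it is missing. In short, the right scaffolding is in place, but the actual content of the inductive step (the choice of the $x_i$'s, the definition of the $y_i$'s, and the three claims of Lemma \ref{lem:basi-construction}) is not supplied.
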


The paper is organized as follows. In section \ref{section:prereq} we introduce some basic definitions and facts. In section \ref{section:scattered} we provide a proof of Theorem \ref{thm:scattered}. In section \ref{section:prime-scattered} we prove that prime interval orders with no infinite antichains are scattered. Section \ref{section:countable-scattered} is devoted to proving that prime interval orders with no infinite antichains are at most countable. Section \ref{section:example} of the paper is devoted to describing examples of well-quasi-ordered prime interval orders whose chain of maximal antichains has arbitrary countable Hausdorff rank, thus providing a proof of Theorem \ref{thm:main-rank}.

\section{Prerequisites}\label{section:prereq}
We introduce some basic definitions and facts that we will need for the proofs.

\subsection{Well-founded ordered sets and decomposition into levels}\label{sub:well-founde}
We recall that an ordered set is \emph{well-founded} if it has no chain order-isomorphic to the chain of negative integers. The decomposition of a well-founded ordered set $P=(V,\leq)$ into \textit{levels} is the sequence $P_{0},\cdots,P_{l},\cdots$
defined by transfinite induction by the formula
\begin{equation*}
P_{l}:=\min(P\setminus \bigcup \{P_{l^{\prime }}:l^{\prime }<l\}).
\end{equation*}
In particular, $P_{0}=\min(P)$, is the set of minimal elements of $P$. The \textit{height} of $P$, denoted by $h(P)$, is the smallest ordinal $l$ such that $P_{l}= \varnothing $. Hence, $V=\cup \{P_{l}:l\leq h(P)\}$. A well-founded ordered set is \emph{level finite} if all its levels are finite.

\subsection{Properties of modules}\label{sub:module}
We now recall some properties of modules in an ordered set. The proof of the following lemma is easy and is left to the reader (see Lemma 4.1 \cite{courcelle-delhomme}).

\begin{lemma}\label{lem:1}Let $P=(V,\leq)$ be an ordered set. The following propositions are true.
\begin{enumerate}[$(1)$]
\item If $M$ is a module, then $M$ is convex, that is for all $u,v\in M$ if $u<v$, then $\{x\in V:u\leq x\leq v\}\subseteq M$.
  \item The intersection of a nonempty set of modules is a module (possibly empty).
  \item The union of two modules with nonempty intersection is a module.
  \item For two modules $M$ and $N$, if $M\setminus N\neq \varnothing$, then $N\setminus M$ is a module.
\end{enumerate}
\end{lemma}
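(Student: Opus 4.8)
The plan is to unwind Definition \ref{def:module} in the case of a poset $P=(V,\leq)$: a set $M\subseteq V$ is a module exactly when, for all $a,a'\in M$ and every $x\notin M$, one has $x\leq a\Leftrightarrow x\leq a'$ together with $a\leq x\Leftrightarrow a'\leq x$; informally, each point outside $M$ relates to every point of $M$ in the same way. Each of the four assertions then reduces to a short deduction from this equivalence, and I expect only part (4) to require genuine care.

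For (1) I would argue by contradiction: if $u<v$ in $M$ and $u\leq x\leq v$ with $x\notin M$, then applying the module condition to $u,v$ and the outside point $x$ gives $x\leq u\Leftrightarrow x\leq v$; since $x\leq v$ holds so does $x\leq u$, and together with $u\leq x$ antisymmetry forces $x=u\in M$, a contradiction. For (2), write $M=\bigcap_{i\in I}M_i$ with $I\neq\varnothing$ and each $M_i$ a module; if $M=\varnothing$ there is nothing to prove, and otherwise, given $a,a'\in M$ and $x\notin M$, I would choose an index $i_0$ with $x\notin M_{i_0}$, note that $a,a'\in M\subseteq M_{i_0}$, and read off the two required equivalences directly from the module $M_{i_0}$ (a single witnessing module excluding $x$ suffices). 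For (3), fix $c\in M\cap N\neq\varnothing$ and $x\notin M\cup N$; given $a,a'\in M\cup N$ the case where both lie in $M$ (or both in $N$) is immediate, while in the mixed case $a\in M$, $a'\in N$ I would bridge through $c$, using the module property of $M$ to get $x\leq a\Leftrightarrow x\leq c$ and that of $N$ to get $x\leq c\Leftrightarrow x\leq a'$, chaining these (and symmetrically for the relation $a\leq x$).

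Part (4) is where I expect the real work. Fix some $b\in M\setminus N$, which exists by hypothesis, and take $a,a'\in N\setminus M$ together with an outside point $x\notin N\setminus M$; the latter means either $x\notin N$ or $x\in M$. If $x\notin N$, then $a,a'\in N$ and the module property of $N$ settles the case at once. If $x\in M$, I would bridge through $b$ using both modules in turn: since $a,a'\notin M$ while $x,b\in M$, the module condition for $M$ yields $x\leq a\Leftrightarrow b\leq a$ and $x\leq a'\Leftrightarrow b\leq a'$; since $b\notin N$ while $a,a'\in N$, the module condition for $N$ yields $b\leq a\Leftrightarrow b\leq a'$. Concatenating the three equivalences gives $x\leq a\Leftrightarrow x\leq a'$, and the parallel concatenation in the other direction gives $a\leq x\Leftrightarrow a'\leq x$, so $N\setminus M$ is a module. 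This double bridging—first passing from $x$ to $b$ inside $M$, then from $a$ to $a'$ inside $N$—is the only step that uses both module hypotheses simultaneously, and the case split $x\notin N$ versus $x\in M$ is the one place where mild care is needed.
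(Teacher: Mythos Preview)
Your proof is correct in all four parts; the case split in (4) and the double bridging through the auxiliary point $b\in M\setminus N$ are exactly what is needed. The paper itself does not supply a proof of this lemma---it declares the result easy and leaves it to the reader (with a pointer to Lemma~4.1 of \cite{courcelle-delhomme})---so your argument is precisely the standard unwinding the authors have in mind.
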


\begin{lemma}\label{lem:module-antichain} Let $P$ be a poset. Then:
\begin{enumerate}[$(1)$]
\item The union $A\cup B$ of two antichains and modules $A$ and $B$ of $P$ such that  $A\cap B\neq \emptyset$   is an antichain and a module of $P$.
\item Every antichain and a module of $P$ is contained in an antichain and module of $P$, maximal with respect to set inclusion.
\end{enumerate}
\end{lemma}
\begin{proof} $(1)$ Since \(A\) and \(B\) are modules and \(A\cap B\neq\emptyset\), it follows
from Lemma~\ref{lem:1}(3) that \(A\cup B\) is a module.

We prove that \(A\cup B\) is an antichain. Let \(a,b\in A\cup B\). If
\(a,b\in A\), then \(a\) and \(b\) are incomparable because \(A\) is an
antichain. Similarly, if \(a,b\in B\), then \(a\) and \(b\) are incomparable.

It remains to consider the case \(a\in A\setminus B\) and \(b\in B\setminus A\).

Choose \(c\in A\cap B\). Since \(a,c\in A\) and \(A\) is an antichain, \(a\)
is incomparable with \(c\). Since \(B\) is a module and \(a\notin B\), the
relation of \(a\) to all elements of \(B\) is the same. Hence, because \(a\)
is incomparable with \(c\in B\), it follows that \(a\) is incomparable with
every element of \(B\), in particular with \(b\). Therefore \(A\cup B\) is an
antichain.\\
$(2)$ This second assumption of the lemma follows readily from the previous one and Zorn's Lemma.
\end{proof}

Let $P=(V,\leq)$ be a poset and $x, y \in P$. We set $x \equiv_P y$ if there exists an antichain and a module $A$ of $P$ containing $x$ and $y$. Clearly, $\equiv_P$ is reflexive and symmetric. It follows from Lemma \ref{lem:module-antichain} that $\equiv_P$ is also transitive. Hence, $\equiv_P$ is an equivalence relation on $V$. The equivalence class of each element $v\in V$ is an antichain and a module, maximal with respect to set inclusion,  containing $v$. We denote by $P/\equiv_P$ the quotient ordered set and let $\psi:P \rightarrow P/\equiv_P$ be the \emph{quotient map} and notice that $\psi$ is order preserving and surjective.

\begin{lemma}\label{lem:gallaiquotient}Let $P$ be a poset. Then:
\begin{enumerate}[$(1)$]
\item $P$ is the lexicographical sum $\sum_{i\in Q} A_i$ of antichains $A_i$ indexed by a poset $Q$ in which no nontrivial module is an antichain.
\item If $P$ is an interval order, then $Q$ is also an interval order.
\end{enumerate}
\end{lemma}

\begin{proof}
$(1)$  $P$ is the lexicographical sum of its equivalence classes indexed by $P/\equiv_P$. If \(A\) is an antichain and a module of \(P/\equiv_P\), then
\(\psi^{-1}(A)\) is an antichain and a module of \(P\). If \(|A|\ge 2\), then
\(\psi^{-1}(A)\) strictly contains each equivalence class lying over an element
of \(A\), contradicting the maximality of the equivalence classes. Hence, \(|A|\le 1\). So we may set   $Q:=P/\equiv_P$.

For $(2)$, suppose that \(P\) is an interval order. Choose one representative
\(x_i\in A_i\) from each equivalence class \(A_i\). Let \(R=\{x_i:i\in Q\}\).
We claim that \(Q\) is isomorphic to the induced subposet \(P[R]\). Indeed,
because the equivalence classes are modules, if \(A_i\neq A_j\), then the
order relation between \(A_i\) and \(A_j\) is uniform: either every element of
\(A_i\) is below every element of \(A_j\), or every element of \(A_j\) is below
every element of \(A_i\), or all elements of \(A_i\) are incomparable with all
elements of \(A_j\). Hence, \[
A_i\le_Q A_j
\quad\Longleftrightarrow\quad
x_i\le_P x_j.
\]
Thus, \(Q\cong P[R]\).

Since interval orders are hereditary under taking induced subposets, \(P[R]\)
is an interval order. Therefore \(Q\) is an interval order.
\end{proof}

\begin{lemma}\label{lem:module-linearsum}
Let $I$ be linear order and $(P_i=(V_i,\leq_i))_{i\in I}$ be a sequence of prime ordered sets. Suppose that for all $i\in I$, $|V_i|\geq 4$. If $M$ is a nontrivial module in the linear sum $\sum_{i\in I} P_{i}$, then there exists an interval $J\neq I$ such that  $M=\cup_{j\in J} V_j$.
\end{lemma}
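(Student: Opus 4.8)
My strategy is to analyze how a nontrivial module $M$ of the linear sum $\sum_{i\in I}P_i$ intersects the individual blocks $V_i$. For each $i\in I$, set $M_i := M\cap V_i$. The key structural observation is that in a linear sum, the position of a block $V_i$ relative to everything outside it is ``monolithic'': if $i<j$ then every element of $V_i$ is below every element of $V_j$. So the only nontrivial comparability structure ``inside'' $M$ that an outside element $x$ can detect comes either from the block $V_i$ containing $x$, or from whether $x$ is above/below entire blocks. First I would prove: \textbf{Claim A}: for each $i\in I$, $M_i$ is a module of $P_i$. This is immediate from the definition, since any $x\in V_i\setminus M_i$ lies outside $M$ and distinguishes elements of $M_i$ in $P_i$ exactly as it distinguishes them in the sum (comparabilities within $V_i$ are unaffected by the sum construction). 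Since $P_i$ is prime, $M_i$ is trivial in $P_i$: that is, $M_i=\varnothing$, $M_i=V_i$, or $M_i$ is a singleton.

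Next I would rule out the singleton case when it is ``isolated,'' and more generally pin down the set $J := \{i\in I : M_i\neq\varnothing\}$. \textbf{Claim B}: $J$ is an interval of $I$. Indeed, $M$ is convex in the sum by Lemma \ref{lem:1}(1), and if $i<k<j$ with $M_i,M_j\neq\varnothing$, pick $u\in M_i$, $v\in M_j$; then $u<v$ in the sum and every $w\in V_k$ satisfies $u\le w\le v$, so $V_k\subseteq M$, giving $M_k=V_k\neq\varnothing$; hence $k\in J$. Notice this argument also shows $M_k=V_k$ whenever $k$ is strictly between two elements of $J$. \textbf{Claim C}: if $i\in J$ is not the largest nor the smallest element of $J$, then $M_i=V_i$ — this follows from the same convexity squeeze using blocks strictly below and above within $J$ when $J$ has more than one element; the endpoint blocks of $J$ need separate treatment.

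The heart of the argument is \textbf{Claim D}: for the endpoint block(s) of $J$, $M_i$ is in fact all of $V_i$, so that $M=\bigcup_{j\in J}V_j$. Suppose $J$ has a least element $i_0$ and $\varnothing\neq M_{i_0}\subsetneq V_{i_0}$; by primeness $M_{i_0}$ is a singleton $\{a\}$. I would derive a contradiction using that $|V_{i_0}|\geq 4$: since $P_{i_0}$ is prime and $|V_{i_0}|\ge 3$, $P_{i_0}$ is not a chain, so $a$ has some element $b\in V_{i_0}$ incomparable to it, and since $|V_{i_0}|\ge 4$ one can find witnesses showing that $b$ (which lies outside $M$) fails to treat $a$ uniformly with respect to the rest of $M$ — concretely, if $J\neq\{i_0\}$ then there is $v\in M_j$ for some $j>i_0$ with $a<v$ but we need $b<v$ too (true, since $i_0<j$), so that route must instead exploit an element of $M$ in a block below $i_0$, which does not exist, forcing us to use that $M$ being a module means $b$ relates to $a$ the same way it relates to every other element of $M$; picking a second element $c\in V_{i_0}$ comparable to $a$ but with a different relation to $b$ (available since $|V_{i_0}|\ge 4$ and $P_{i_0}$ prime) yields the contradiction. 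The symmetric argument handles a largest element of $J$, and the case $J=\{i_0\}$ a single block is excluded because then $M=M_{i_0}$ is a nontrivial module of the prime poset $P_{i_0}$ — impossible. Finally, $J$ is a nontrivial interval of $I$: it is not empty (as $M\neq\varnothing$), not a singleton (just shown), and not all of $I$ (as $M\neq V$). Combining Claims B–D gives $M=\bigcup_{j\in J}V_j$ with $J$ a nontrivial interval, as required.

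The step I expect to be the main obstacle is \textbf{Claim D}, the endpoint analysis: ruling out a singleton $M_{i_0}=\{a\}$ at the bottom (or top) of $J$. The subtlety is that every outside element $x$ in a block $j>i_0$ automatically satisfies $a<x$ and $b<x$ for any $b\in V_{i_0}$, so such $x$ cannot distinguish $a$ from other elements of $M\cap V_{i_0}$ — the distinguishing must come from within $V_{i_0}$ itself, which is exactly where the hypothesis $|V_i|\ge 4$ (rather than just $\ge 3$) is needed to guarantee enough elements in $V_{i_0}$ with the right pattern of comparabilities to $a$ and to a chosen incomparable $b$. I would make this precise by a short case analysis on the comparability type of $a$ in $P_{i_0}$, using primeness to ensure $a$ has both a comparable and an incomparable partner, and then a fourth element to force the module condition to fail.
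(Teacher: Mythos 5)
Your Claims A--C are fine, but the whole content of the lemma sits in Claim D, and your treatment of it does not close. With $i_0=\min J$, $M_{i_0}=\{a\}$ and some $v\in M\cap V_j$ for $j>i_0$, pick $b\in V_{i_0}$ incomparable to $a$. (Such a $b$ exists, but not because $P_{i_0}$ ``is not a chain'' --- that is too weak a reason; it exists because in a prime poset on at least four vertices no vertex is comparable to all others: otherwise $D(a)$ and $U(a)$ would partition $V_{i_0}\setminus\{a\}$ into two modules, one of which would have at least two elements and be nontrivial.) Then $b\notin M$, $b$ is incomparable to $a\in M$, and $b<v$ with $v\in M$: this is already a violation of the module condition for $M$, and the proof is finished. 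You actually write down both ingredients (``$b<v$ too (true, since $i_0<j$)'' and ``$b$ relates to $a$ the same way it relates to every other element of $M$''), but then declare that this route fails and pivot to choosing a further element $c\in V_{i_0}$ with ``a different relation to $b$.'' That final move proves nothing: $b$ and $c$ both lie outside $M$, and the module condition places no constraint on how two outside elements relate to each other. So as written the endpoint analysis is incoherent; the correct closing step is precisely the one you abandoned.

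A second genuine error: your assertion that $J$ cannot be a singleton is false, and the argument you give for it conflates two notions of triviality. If $M=V_{i_0}$ for a single index $i_0$, then $M$ is a legitimate nontrivial module of the linear sum (proper because $|I|\ge 2$), yet $J=\{i_0\}$ and $M_{i_0}=V_{i_0}$ is a \emph{trivial} module of $P_{i_0}$, so primeness excludes nothing here. The word ``nontrivial'' in the conclusion can only mean that $J$ is a nonempty proper interval of $I$, which is all the paper's proof establishes. Finally, note that the paper avoids your endpoint case analysis entirely: if $M\cap V_i$ and $V_i\setminus M$ are both nonempty (and $M\not\subseteq V_i$), then by Lemma \ref{lem:1} both $M\cap V_i$ and $V_i\setminus M$ are modules of $P_i$, hence singletons by primeness, forcing $|V_i|=2<4$. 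That little computation with the module calculus is the idea your plan is missing; with it, every block meeting $M$ is wholly contained in $M$, and convexity does the rest.
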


\begin{proof}
Let \(P:=\sum_{i\in I}P_i\). Let \(M\) be a nontrivial module of \(P\). We claim first that, for every
\(i\in I\),
\(
M\cap V_i\neq\emptyset
\quad\Longrightarrow\quad
V_i\subseteq M.
\)

Fix \(i\in I\) such that \(M\cap V_i\neq\emptyset\). Since \(M\) is a module
of \(P\), the set \(M\cap V_i\) is a module of \(P_i\). Because \(P_i\) is
prime, we have \(|M\cap V_i|\in\{1,|V_i|\}\).

Suppose, for a contradiction, that \(M\cap V_i=\{x\}\). Since \(M\) is nontrivial, there exists \(z\in M\setminus\{x\}\). Then
\(z\in V_j\) for some \(j\neq i\).

Assume first that \(i<j\). Then \(x<z\).

Let \(y\in V_i\setminus\{x\}\). Since \(y<z\) and \(y\notin M\), while
\(x,z\in M\), the module property of \(M\) forces \(y<x\). Thus, every element of \(V_i\setminus\{x\}\) is below \(x\). Hence, \(V_i\setminus\{x\}\) is a module of \(P_i\) and is nontrivial because \(|V_i|\ge 4\), contradicting that \(P_i\) is prime and .

The case \(j<i\) is symmetric. Then \(z<x\), and the same argument shows that every element of \(V_i\setminus\{x\}\) is above \(x\). Again
\(V_i\setminus\{x\}\) is a nontrivial module of \(P_i\), a contradiction.

Therefore the singleton case cannot occur, and so \(M\cap V_i=V_i\). This proves the claim.

Now let \(J:=\{i\in I:M\cap V_i\neq\emptyset\}\). By the claim, \(M=\cup_{j\in J}V_j\).

It remains to show that \(J\) is an interval of \(I\). Let \(i,k\in J\) and suppose \(i<j<k\).

Choose \(a\in V_i\) and \(b\in V_k\). Then \(a,b\in M\). If \(j\notin J\),
choose \(y\in V_j\). Then \(y\notin M\), and in the linear sum we have \(a<y<b\).  This contradicts the fact that \(M\) is a module, since an element outside \(M\) must have the same relation to every element of \(M\). Therefore \(j\in J\), and \(J\) is an interval.

Finally, since \(M\) is nontrivial, \(M\neq V(P)\). Hence, \(J\neq I\). Thus, there exists an interval \(J\neq I\) such that \(M=\cup_{j\in J}V_j\).
\end{proof}

\subsection{Basic facts about interval orders}

The following are well-known characterisations of interval orders. For a proof see \cite{fi}.

\begin{lemma}\label{lem:intervalorder-prop}
  Let $P$ be an ordered set. The following propositions are equivalent.
  \begin{enumerate}[(i)]
    \item $P$ is an interval order.
    \item $P$ does not embed ${\rm \bf 2} \oplus {\rm \bf 2}$, the disjoint sum of two $2$-element chains.
    \item $AM(P)_{\leq}$ is a chain.
    \item $(\{U(x) : x\in V\},\subseteq)$ is a chain.
    \item $(\{D(x) : x\in V\},\subseteq)$ is a chain.
  \end{enumerate}
\end{lemma}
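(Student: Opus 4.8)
The plan is to establish the five equivalences by the cycle $(i)\Rightarrow(ii)\Rightarrow(iii)\Rightarrow(i)$, together with $(ii)\Leftrightarrow(iv)$ and, by dualizing, $(ii)\Leftrightarrow(v)$. Throughout I would exploit a reformulation of the order \eqref{eqordreinter} on maximal antichains: writing $\downarrow A:=\{x\in V: x\le a \text{ for some } a\in A\}$ and $\uparrow A:=\{x\in V: x\ge a \text{ for some } a\in A\}$, one checks directly that for maximal antichains $A\le B$ if and only if $\downarrow A\subseteq\downarrow B$, and moreover that $A\le B$ also forces $\uparrow B\subseteq\uparrow A$ (if $z\ge b_0\in B$ but $z\notin\uparrow A$, then maximality of $A$ gives $z\le a'\in A$, and $A\le B$ gives $a'\le b_1\in B$, whence $b_0\le b_1$ collapses everything and puts $z\in A$, a contradiction). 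Finally, self-duality of $\mathbf{2}\oplus\mathbf{2}$ together with the fact that $D(x)$ in $P$ is $U(x)$ in the dual poset $P^{\ast}$ lets me deduce $(ii)\Leftrightarrow(v)$ from $(ii)\Leftrightarrow(iv)$ applied to $P^{\ast}$, so I only treat the up-set statement.

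For $(i)\Rightarrow(ii)$ I fix an interval representation $x\mapsto I_x$ and suppose $\mathbf{2}\oplus\mathbf{2}$ embeds, say $a<b$, $c<d$ with all four cross pairs incomparable. Then $a<b$ and $c<d$ push the right endpoint of $I_a$ below the left endpoint of $I_b$ and likewise for $c,d$, while each incomparable pair forces the two intervals to meet; chaining these endpoint inequalities around the four elements produces a point strictly below itself, a contradiction. For $(ii)\Leftrightarrow(iv)$ I argue contrapositively in both directions. If $\mathbf{2}\oplus\mathbf{2}$ embeds as above, then $b\in U(a)\setminus U(c)$ and $d\in U(c)\setminus U(a)$, so $U(a)$ and $U(c)$ are $\subseteq$-incomparable. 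Conversely, if $U(x)$ and $U(y)$ are $\subseteq$-incomparable, choose $b\in U(x)\setminus U(y)$ and $d\in U(y)\setminus U(x)$; a short case analysis using only transitivity and the defining memberships shows that $x,b,y,d$ are pairwise related exactly as $x<b$, $y<d$ with all cross pairs incomparable, i.e. they span a copy of $\mathbf{2}\oplus\mathbf{2}$.

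The implication $(ii)\Rightarrow(iii)$ I also prove contrapositively. If $AM(P)_{\le}$ is not a chain, then, via the reformulation, there are maximal antichains $A,B$ and elements $p\le a_0\in A$ and $q\le b_0\in B$ with $p\notin\downarrow B$ and $q\notin\downarrow A$. Since $p$ is below nothing in $B$ yet $B$ is maximal, some $b_1\in B$ lies below $p$, and symmetrically some $a_1\in A$ lies below $q$, so that $b_1<a_0$ and $a_1<b_0$. A case analysis, using that $A$ and $B$ are antichains and that $p$ (resp. $q$) is $\le$ no element of $B$ (resp. $A$), shows $\{b_1,a_0,a_1,b_0\}$ are distinct and realize $\mathbf{2}\oplus\mathbf{2}$, contradicting $(ii)$.

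The heart of the proof, and the step I expect to be the main obstacle, is $(iii)\Rightarrow(i)$, where I build an interval representation directly on the chain $C:=AM(P)_{\le}$. To each $x\in V$ I assign $I_x:=\{A\in AM(P): x\in A\}$, which is nonempty since $\{x\}$ extends to a maximal antichain. First $I_x$ is convex in $C$: if $A,B'\in I_x$ and $A\le B\le B'$, then $x\in\downarrow A\subseteq\downarrow B$ gives $x\le b$ and $x\in\uparrow B'\subseteq\uparrow B$ gives $x\ge b''$ with $b,b''\in B$, so $b''\le x\le b$ forces $x=b\in B$. For the order, if $x<y$ then any $A\ni x$ and $B\ni y$ satisfy $A\neq B$, and $B\le A$ is impossible (it would yield $y\le a_{\ast}\in A$ with $x<y$, contradicting that $A$ is an antichain), so by totality $A<B$; conversely, if every antichain through $x$ is strictly below every antichain through $y$, then $x$ and $y$ are neither equal, nor incomparable (a common maximal antichain would violate strictness), nor related by $y<x$, forcing $x<y$. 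Thus $x\mapsto I_x$ realizes $P$ as a family of intervals of $C$. The one technical point to absorb is that incomparable twins, i.e.\ distinct $x\neq y$ lying in exactly the same maximal antichains, receive equal intervals; these are separated by a routine refinement of $C$ (replacing each shared interval by a system of nested ones), which does not disturb the order between distinct classes. This closes the cycle and yields all five equivalences.
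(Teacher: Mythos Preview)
The paper does not give a proof of this lemma at all: it simply records the five conditions as ``well-known characterisations of interval orders'' and cites Fishburn~\cite{fi}. So there is no argument in the paper to compare yours against; what can be said is that your self-contained proof is correct and is organised in the standard way, and that your construction in $(iii)\Rightarrow(i)$ is exactly the ``standard representation'' $x\mapsto C(x)$ that the paper later develops independently in Lemma~\ref{lem:standar}.

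The only place where a reader might ask for more is your final sentence about ``incomparable twins'' and the ``routine refinement'' of $C$. You are right that distinct $x,y$ with $I_x=I_y$ are precisely incomparable two-element modules (what the paper calls \emph{doubly critical pairs}), so the map $x\mapsto I_x$ is an order isomorphism only after quotienting by this relation; turning the quotient representation into an honest interval representation of $P$ itself requires replacing suitable points of $C$ by short chains so that each twin class receives pairwise distinct, mutually overlapping intervals with identical external behaviour. This is indeed routine, and the paper performs exactly the same fix-up later (see the proof of Theorem~\ref{thm:scattered}, implication $(iv)\Rightarrow(v)$), but it is the one step where a line or two of construction would strengthen the write-up.
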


We will need the following lemma.

\begin{lemma}\label{lem:minimal-element}Let $P=(V,\leq)$ be a well-founded level finite interval order. Let $x\in \min(P)$ with the property that $U(x)$ is a maximal element in the ordered set \break $(\{U(t): t\in \min(P)\},\subseteq)$. Then $U(x)=V\setminus \min(P)$. Equivalently, $\min(P)$ is the only maximal antichain of $P$ containing $x$.
\end{lemma}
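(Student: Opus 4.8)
The plan is to prove the displayed equality $U(x)=V\setminus Min(P)$ by establishing both inclusions, and then to read off the reformulation in terms of maximal antichains. First I would record two easy preliminary facts. Since $P$ is well-founded, every $v\in V$ lies above or equals some minimal element: the set $\{u\in V:u\le v\}$ is nonempty and well-founded, so it has a minimal element $t$, and one checks at once that $t$ is minimal in all of $P$; moreover $t=v$ forces $v\in Min(P)$. In particular $Min(P)$ is a maximal antichain of $P$ (it is an antichain, and every element of $V\setminus Min(P)$ is comparable to some element of $Min(P)$). Also, no element of $U(x)$ can be minimal, so the inclusion $U(x)\subseteq V\setminus Min(P)$ holds with no hypothesis at all.

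The substance is the reverse inclusion, and this is where the interval-order hypothesis enters. By Lemma \ref{lem:intervalorder-prop}, $(\{U(t):t\in V\},\subseteq)$ is a chain, hence so is its sub-poset $(\{U(t):t\in Min(P)\},\subseteq)$; in a chain a maximal element is the greatest element, so the hypothesis that $U(x)$ is maximal gives $U(t)\subseteq U(x)$ for every $t\in Min(P)$. (The role of level-finiteness here is merely to guarantee that $Min(P)$ is finite, so that such an $x$ exists; it is not otherwise used in the argument.) Then, given any $v\in V\setminus Min(P)$, the preliminary fact supplies $t\in Min(P)$ with $t<v$, whence $v\in U(t)\subseteq U(x)$. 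Thus $V\setminus Min(P)\subseteq U(x)$, and combined with the first inclusion we get $U(x)=V\setminus Min(P)$.

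For the reformulation I would argue that any maximal antichain $A$ containing $x$ must equal $Min(P)$. Since $A$ is an antichain, $A\subseteq\{x\}\cup\inc_P(x)$, where $\inc_P(x)=V\setminus(\{x\}\cup D(x)\cup U(x))$; because $x$ is minimal, $D(x)=\varnothing$, and using the equality just proved, $\inc_P(x)=V\setminus(\{x\}\cup(V\setminus Min(P)))=Min(P)\setminus\{x\}$. Hence $A\subseteq Min(P)$, and since $A$ is a maximal antichain and $Min(P)$ is an antichain, $A=Min(P)$; conversely $Min(P)$ is itself a maximal antichain containing $x$. I do not expect a genuine obstacle: the only delicate point is the passage from ``$U(x)$ maximal'' to ``$U(x)$ greatest'', which is precisely what the interval-order property buys via Lemma \ref{lem:intervalorder-prop}, together with the routine verification that in a well-founded poset every non-minimal element sits strictly above a minimal one.
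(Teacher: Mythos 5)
Your proof is correct and follows essentially the same route as the paper's: well-foundedness supplies a minimal element below any non-minimal $v$, and Lemma \ref{lem:intervalorder-prop}$(iv)$ makes $\{U(t):t\in Min(P)\}$ a chain, so the maximal $U(x)$ absorbs every $U(t)$ (the paper phrases this as a contradiction, you phrase it directly via ``maximal in a chain is greatest''). You also spell out the ``equivalently'' clause about maximal antichains, which the paper leaves implicit; that verification is fine.
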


\begin{proof}
Let \(v\in V\setminus \min(P)\). We prove that \(x<v\).

Suppose, for a contradiction, that \(v\) is incomparable with \(x\). Since
\(P\) is well-founded and \(v\notin \min(P)\), there exists
\(y\in \min(P)\) such that \(y<v\). Since \(v\) is incomparable with \(x\), we have \(y\neq x\). Now \(v\in U(y)\), while \(v\notin U(x)\). Hence, \(U(x)\neq U(y)\). By Lemma~\ref{lem:intervalorder-prop}(iv), the sets \(U(x)\) and \(U(y)\) are
comparable under set inclusion. Since \(v\in U(y)\setminus U(x)\), we must have
\(
U(x)\subsetneq U(y).
\)
This contradicts the choice of \(x\), because \(U(x)\) was assumed maximal
among the sets
\(
\{U(t):t\in \min(P)\}.
\)
Therefore \(v\) cannot be incomparable with \(x\). Since \(x\) is minimal and
\(v\notin \min(P)\), it follows that \(x<v\). Thus, \(V\setminus \min(P)\subseteq U(x)\).
The reverse inclusion is immediate, since no minimal element lies above \(x\).
Hence, \(U(x)=V\setminus \min(P)\).

Finally, this is equivalent to saying that \(\min(P)\) is the only maximal
antichain containing \(x\). Indeed, if \(U(x)=V\setminus\min(P)\), then \(x\)
is comparable with every nonminimal element, so any antichain containing \(x\)
is contained in \(\min(P)\). Hence, the unique maximal antichain containing
\(x\) is \(\min(P)\).

Conversely, if some \(v\in V\setminus\min(P)\) were incomparable with \(x\),
then \(\{x,v\}\) could be extended to a maximal antichain containing \(x\),
different from \(\min(P)\), a contradiction.
\end{proof}

\section{A proof of Theorem \ref{thm:scattered}}\label{section:scattered}

Before proceeding to the proof of Theorem~\ref{thm:scattered}, we need a
preliminary result. Let \(P=(V,\leq)\) be an ordered set. An \emph{initial
segment} of \(P\) is a subset \(I\subseteq V\) such that, whenever \(x\in I\)
and \(y\leq x\), we have \(y\in I\). We denote by \(I(P)\) the set of initial
segments of \(P\), and by $(I(P),\subseteq)$ the set \(I(P)\) ordered by set inclusion.

For \(A\subseteq V\), set
\[
\mydownarrow A:=\{x\in V:x\leq a \text{ for some }a\in A\}.
\]
Clearly, \(\mydownarrow A\) is an initial segment of \(P\).

If \(A\) and \(B\) are maximal antichains of \(P\) and \(A\leq_{AM}B\), then \(\mydownarrow A\subseteq \mydownarrow B\). Moreover, if \(A\neq B\), then
\(
\mydownarrow A\subsetneq \mydownarrow B.
\)
Therefore the map
\[
A\longmapsto \mydownarrow A
\]
is an order-embedding of \(AM(P)_\leq\) into \((I(P),\subseteq)\). Hence, if
\((I(P),\subseteq)\) is scattered, then \(AM(P)_\leq\) is scattered.

The following result is part of the second theorem on page~149 of
\cite{bonnet-pouzet}; see also Theorem~2 of \cite{pouzet-zaguia}.%to prove the implication $(i) \Rightarrow (ii)$ of Theorem \ref{thm:scattered}.

\begin{theorem}\label{thm:bonnet-pouzet}Let $P=(V,\leq)$ be an ordered set. The following are equivalent.
  \begin{enumerate}[$(a)$]
    \item $P$ is scattered and with no infinite antichains.
    \item $(I(P),\subseteq)$ is scattered.
  \end{enumerate}
\end{theorem}

We will also need the following two lemmas. Let \(C\) be a chain and let
\(\mathcal J\) be a collection of nonempty intervals of \(C\). We say that
\((C,\mathcal J)\) is \emph{discriminating} if the following properties hold:

\begin{enumerate}[(1)]
\item Every element of \(C\) belongs to some interval in \(\mathcal J\);

\item Every two distinct elements \(x,y\in C\) are \emph{separated}, that is,
there exist intervals \(I_x,I_y\in\mathcal J\) such that
\(
x\in I_x,\quad y\in I_y,
\quad\text{and}\quad
I_x\cap I_y=\varnothing;
\)

\item \(\mathcal J\) has the \emph{finite intersection property}, that is,
for every subfamily \(\mathcal J'\subseteq\mathcal J\), the intersection \(\cap\mathcal J'\) is nonempty whenever every finite subfamily of \(\mathcal J'\) has nonempty intersection.
\end{enumerate}

The following lemma (Lemma~3 of \cite{zaguia2008}) gives a necessary and
sufficient condition for an interval order to admit a unique representation by
its chain of maximal antichains.

\begin{lemma}[\cite{zaguia2008}]\label{lem:standard}
Let \(P\) be an interval order, \(C:=AM(P)_\leq\),  and, for every \(x\in P\), let
\(C(x):=\{X\in C:x\in X\}\). Set \(\mathcal J:=\{C(x):x\in P\}\).

Then \(\mathcal J\) is a discriminating family of nonempty intervals of \(C\),
and the map \(\varphi:P\to\mathcal J\) defined by \(\varphi(x):=C(x)\) is an order-preserving surjection. Moreover, \(\varphi\) is an order
isomorphism if and only if \(P\) has no \(2\)-element antichain as a module.
\end{lemma}

The next lemma extends the previous one to interval orders having a
\(2\)-element antichain as a module.

\begin{lemma}\label{lem:extended-standard}
Let \(P=(V,\leq)\) be a scattered interval order with no infinite antichain.
Then \(P\) is isomorphic to a family of intervals of a scattered chain.
\end{lemma}

\begin{proof}
Let \(Q:=P/{\equiv_P}\), where the equivalence classes are the maximal
modules of \(P\) which are antichains. By Lemma~\ref{lem:gallaiquotient},
\(
P=\sum_{q\in Q} A_q,
\)
where \(A_q\) is the equivalence class over \(q\) is a finite antichain, and \(Q\) has no nontrivial
module which is an antichain. Write
\(
q=\{x_1^q,\dots,x_{n(q)}^q\}.
\)

Since \(P\) is scattered and has no infinite antichain, the same is true of
\(Q\). Indeed, any chain or antichain in \(Q\) lifts to one in \(P\) by
choosing representatives. Hence, by Theorem~\ref{thm:bonnet-pouzet},
\((I(Q),\subseteq)\) is scattered. Since
\(
A\longmapsto \mydownarrow A
\)
embeds \(AM(Q)_\leq\) into \((I(Q),\subseteq)\), the chain
\(
C:=AM(Q)_\leq
\)
is scattered.

For each \(q\in Q\), put
\(
I_q:=\{A\in C:q\in A\}.
\)
By Lemma~\ref{lem:standard}, since \(Q\) has no nontrivial antichain module,
the map
\(
q\longmapsto I_q
\)
is an isomorphism from \(Q\) onto a family of nonempty intervals of \(C\).

For each \(q\in Q\), define the following initial segments of \(C\).
\[
L_q:=\{A\in C:A<B\text{ for every }B\in I_q\},
\qquad
R_q:=L_q\cup I_q.
\]
Then
\(
I_q=R_q\setminus L_q
\quad\text{and}\quad
L_q\subsetneq R_q.
\)

Let \(\mathcal I(C)\) be the chain of initial segments of \(C\), ordered by
set inclusion. Since \(C\) is scattered, \(\mathcal I(C)\) is scattered (for
a proof see section 6.1.4 of \cite{fraissetr}).  For each
\(H\in\mathcal I(C)\), set
\[
Q_H:=\{q\in Q:L_q=H\}.
\]
Suppose \(r<s\) in \(Q_H\). Then \(I_r<I_s\), hence \(R_r\subseteq L_s\).
But \(L_s=L_r\), so \(R_r\subseteq L_r\), contradicting \(L_r\subsetneq R_r\).
Therefore no two distinct elements of \(Q_H\) are comparable. Hence, \(Q_H\) is an
antichain. Since \(Q\) has no infinite antichain, \(Q_H\) is finite.

For each \(H\in\mathcal I(C)\), let
\[
X_H:=\{p_i^q:q\in Q_H,\ 1\leq i\leq n(q)\}
\]
be a finite set of new points, one for each \(x_i^q\in q\), all chosen
pairwise distinct. Choose an arbitrary linear order on \(X_H\). Let \(0_H\) be a new point,
chosen distinct from all the points \(p_i^q\), and set
\[
F_H:=\{0_H\}\oplus X_H,
\]
the linear sum in which \(0_H\) lies below every point of \(X_H\). If
\(Q_H=\emptyset\), then \(X_H=\emptyset\), and hence \(F_H=\{0_H\}\).

Now define
\[
D:=\sum_{H\in\mathcal I(C)}F_H.
\]
Since \(\mathcal I(C)\) is scattered and every \(F_H\) is finite, \(D\) is
scattered.

For \(x=x_i^q\in q\), define
\[
J_x:=[p_i^q,0_{R_q}]_D.
\]
This is a well-defined interval of \(D\), because
\[
p_i^q\in F_{L_q},
\qquad
0_{R_q}\in F_{R_q},
\qquad
L_q\subsetneq R_q,
\]
and hence \(p_i^q<0_{R_q}\) in \(D\).

We claim that
\[
x<y
\quad\Longleftrightarrow\quad
J_x<J_y.
\]
Let \(x=x_i^q\) and \(y=x_j^r\). If \(x<y\) in \(P\), then \(q<r\) in \(Q\), hence
\(I_q<I_r\). Therefore
\(
R_q\subseteq L_r,
\)
so
\(
0_{R_q}<p_j^r,
\)
and consequently \(J_x<J_y\).

Conversely, if \(J_x<J_y\), then
\(
0_{R_q}<p_j^r.
\)
Since these points lie respectively in \(F_{R_q}\) and \(F_{L_r}\), we get
\(
R_q\subseteq L_r.
\)
Thus, \(
I_q<I_r,
\)
so \(q<r\), and therefore \(x<y\) in the lexicographical sum \(P\).

It remains to prove that the map \(x\mapsto J_x\) is injective. Suppose
\(
J_x=J_y.
\)
Write
\[
x=x_i^q,\qquad y=x_j^r.
\]
Then
\[
J_x=[p_i^q,0_{R_q}]_D,
\qquad
J_y=[p_j^r,0_{R_r}]_D.
\]

Since two closed intervals in a chain are equal only if they have the same
left endpoint and the same right endpoint, we get
\(
p_i^q=p_j^r
\quad\text{and}\quad
0_{R_q}=0_{R_r}.
\)

The points \(p_i^q\) were chosen pairwise distinct, so
\(
p_i^q=p_j^r
\)
implies
\(
q=r
\quad\text{and}\quad
i=j.
\)
Therefore
\(
x=x_i^q=x_j^r=y.
\)
Thus, the map \(x\mapsto J_x\) is injective.

Thus, \(P\) is isomorphic to the interval order induced by the family
\(
\{J_x:x\in P\}
\)
of intervals of the scattered chain \(D\).
\end{proof}

We now present a proof of Theorem \ref{thm:scattered}.

\begin{proof}
We first prove
\[
(i)\Longleftrightarrow(ii),\qquad
(i)\Longrightarrow(iii),\qquad
(i)\Longrightarrow(iv),
\]
and then
\[
(iii)\Longrightarrow(i),\qquad
(iv)\Longrightarrow(i),
\]
and finally \((i)\Longleftrightarrow(v)\).

\((i)\Rightarrow(ii)\). Suppose that \(P\) is scattered. Since \(P\) has no infinite antichain,
Theorem~\ref{thm:bonnet-pouzet} implies that \((I(P),\subseteq)\) is scattered. The map
\[
A\longmapsto \mydownarrow A
\]
is an order-embedding of \(AM(P)_\leq\) into \((I(P),\subseteq)\). Hence, \(AM(P)_\leq\) is scattered.

\((ii)\Rightarrow(i)\). Suppose that \(AM(P)_\leq\) is scattered. We show that \(P\) is
scattered. Let \(L\) be a chain of \(P\). For each \(x\in L\), choose a
maximal antichain \(A_x\) containing \(x\). We claim that the map \(x\longmapsto A_x\) is an order-embedding of \(L\) into \(AM(P)_\leq\).

Indeed, if \(x<y\) in \(L\), then \(A_x\neq A_y\). Since \(P\) is an interval order, \(AM(P)_\leq\) is a chain. Thus, either
\(
A_x<_{AM}A_y
\quad\text{or}\quad
A_y<_{AM}A_x.
\)
The second inequality is impossible. For if \(A_y\leq_{AM}A_x\), then, since
\(y\in A_y\), there exists \(a\in A_x\) such that \(y\leq a\). But \(x\in A_x\) and \(x<y\leq a\), contradicting the fact that \(A_x\) is an
antichain. Hence, \(
A_x<_{AM}A_y.
\)
Therefore, \(L\) embeds into \(AM(P)_\leq\). Since \(AM(P)_\leq\) is scattered,
\(L\) is scattered. Thus, every chain of \(P\) is scattered, so \(P\) is
scattered.

\((i)\Rightarrow(iii)\) Suppose that \(P\) is scattered. Then the dual \(P^*\) is also scattered
and has no infinite antichain. By Theorem~\ref{thm:bonnet-pouzet}, \((I(P^*),\subseteq)\) is scattered. For every \(x\in P\),
\[
U_P(x)=\{y\in P:x<y\}
\]
is an initial segment of \(P^*\). Hence, \((\{U(x):x\in V\},\subseteq)\) is a subchain of \((I(P^*),\subseteq)\), and is therefore scattered.

\((i)\Rightarrow(iv)\). Similarly, for every \(x\in P\),
\(
D_P(x)=\{y\in P:y<x\}
\)
is an initial segment of \(P\). Since \((I(P),\subseteq)\) is scattered, the
subchain \((\{D(x):x\in V\},\subseteq)\) is scattered.

\((iii)\Rightarrow(i)\). Suppose that \((\{U(x):x\in V\},\subseteq)\) is scattered. Let \(L\) be a chain of \(P\). If \(x<y\) in \(L\), then \(U(y)\subsetneq U(x)\), because \(U(y)\subseteq U(x)\), while \(y\in U(x)\) but \(y\notin U(y)\). Thus, \[
x\longmapsto U(x)
\]
is an order-reversing embedding of \(L\) into the scattered chain \((\{U(x):x\in V\},\subseteq)\). Hence, \(L\) is scattered. Therefore \(P\) is scattered.

\((iv)\Rightarrow(i)\). Suppose that \((\{D(x):x\in V\},\subseteq)\) is scattered. Let \(L\) be a chain of \(P\). If \(x<y\) in \(L\), then
\(D(x)\subsetneq D(y)\), because \(D(x)\subseteq D(y)\), while \(x\in D(y)\) but \(x\notin D(x)\). Thus, \[
x\longmapsto D(x)
\]
is an order-embedding of \(L\) into the scattered chain \((\{D(x):x\in V\},\subseteq)\).
Hence, \(L\) is scattered. Therefore \(P\) is scattered.

The four conditions \((i), (ii), (iii), (iv)\) are therefore equivalent.\\

We prove $(v) \Rightarrow (i)$. Suppose $P$ is isomorphic to a collection of intervals $\mathcal{J}$ of some scattered chain $C$ and let $\varphi$ be such an isomorphism. Let $L$ be a chain of $P$. For each $l\in L$ let $I_l$ be the interval of $C$ such that $\varphi(l)=I_l$. It follows from our assumption that $L$ is a chain that the intervals $I_l$, for $l\in L$ are pairwise disjoint. Choosing an arbitrary element $x_l\in I_l$ for every $l\in L$ we have  that $L$ is isomorphic to $\displaystyle \sum_{l\in L} \{x_l\}$. Hence, $L$ is isomorphic to a subchain of the scattered chain $C$ and is therefore scattered.\\
To prove $(i) \Rightarrow (v)$ Suppose $P$ is scattered. It follows from $(i)\Rightarrow (ii)$ that $AM(P)_{\leq}$ is scattered. Then use Lemma \ref{lem:extended-standard}.
\end{proof}

\section{Prime interval orders with no infinite antichains are scattered}\label{section:prime-scattered}

We provide a proof of the statement in the title of this section. Our original proof was based on a necessary and sufficient condition for an interval order to have a unique representation by intervals from its chain of maximal antichains (see  Lemma \ref{lem:standard}). The following simpler proof was suggested to us by one of the referees.

\begin{proof}
Let $P=(V,\leq)$ be a prime interval order with no infinite antichains. Let $K$ be a chain in $P$ that is order-isomorphic to the chain of rational numbers $\QQ$.

Suppose, by way of contradiction, that every element $c\in V$ that is comparable to some, but not all, elements of $K$ is incomparable with exactly one element $c_K \in K$. Let $H$ be the convex hull of
\[K \cup \{c \in V : \exists k\in K, c_K \in K \mbox{ and } c \mbox{ is comparable to } k \mbox{ and } c \mbox{ is incomparable with } c_K\}.\]
For $c \in K$, set $c_K := c$. Let $z \in V \setminus H$ be comparable to some element $c\in H$, without loss of generality, $z>c$. Because $P$ is an interval order the down sets are totally ordered by set inclusion and hence $c$ is larger than all elements of $\{k \in K : k<c_K \}\neq \varnothing$. Because $z\not \in H$, $z$ is comparable to all elements of $K$. Indeed, any element incomparable with some but not all elements of \(K\)
belongs to the defining set of \(H\). Because $H$ is convex, we obtain $z$ is larger than all elements of $K$. By definition, for all $h\in H$, there exists a $c'$ that is comparable to all elements in $K \setminus \{c'_K\}$ such that $h \leq c'$. Since  $c'$ is below all elements of $\{k \in K : k>c_K \}$ and $z$ is larger than all elements of the latter set we infer that $h<z$. Consequently, $H$ is a module in $P$, and, because $P$ is prime, we have $H = V$. Hence, every element of $V \setminus K$ is in $H$.

Moreover, if there was no $c\in V$ that is not comparable to all elements of $K$, then $P$ would be a lexicographic sum over an infinite chain and hence not prime. Hence, there exists $c\in V$ that is not comparable to all elements of $K$. Let $L_c$ be the set of all elements $h\in H$ such that $\{k \in K : k>c_K\} = \{k\in K : k>h\}$ and $\{k \in K : k<c_K \} = \{k \in K : k<h\}$. Then $\{c_K ,c\} \subseteq L_c$ and $L_c\neq V$. Let $z$ be strictly above an element in $L_c$. If $z$ is not comparable to all elements of $K$, then there is a $k \in K$ such that $z, z_K >k>c_K$ and $z$ is above all elements of $L_c$. If $z$ is comparable to all elements of $K$, then there is a $k \in K$ such that $z>k>c_K$ and $z$ is above all elements of $L_c$. Hence, $L_c$ is a nontrivial module in $P$, a contradiction.

Thus, for every chain $K$ in $P$ that is order isomorphic to $\QQ$, there is an $x^K$ that is not comparable to an interval of positive length in $K$. This interval contains a subset $K_1$ that is order-isomorphic to $\QQ$. Once $K_n$ a chain in $P$ that is order-isomorphic to $\QQ$ has been chosen, let $x_n$ be an element $x^{K_n}$ that is not comparable to an interval $J_n$ of positive length in $K_n$, and let $K_{n+1}$ be a subset of $J_n$ that is isomorphic to $\QQ$. Now $\{x_n : n \in \NN\}$ cannot be an antichain, so there are $n<m$ such that $x_n$ is comparable to $x_m$. By definition neither of $x_n$ and $x_m$ is comparable to any element of the infinite chain $K_m$. Hence, $P$ has a disjoint sum of two 2-element chains, contradicting that $P$ is an interval order. Therefore $P$ cannot contain copies of the chain $\QQ$ of rational numbers.
\end{proof}

\section{Prime interval orders with no infinite antichains are at most countable}\label{section:countable-scattered}

Let $P=(V,\leq)$ be an ordered set.  An initial segment $I$ is \emph{principal} if there exists $x\in V$ such that $I=\mydownarrow\,{x} :=\{y\in V: y \leq x\}$. A nonempty initial segment $I$ is an \emph{ideal} if $I$ is \emph{updirected}, that is for every $x,y\in I$ there exists $z\in I$ such that $x\leq z$ and $y\leq z$. For instance, every principal initial segment is an ideal.

In any ordered set, every initial segment is a union of ideals. The structure of initial segments is a bit simpler in the case of interval orders. If $I$ is an initial segment not generated by the set $\max(I)$ of the maximal elements of $I$, that is $I\neq \mydownarrow \max(I)$, then $I\setminus \mydownarrow \max(I)$ is updirected with no largest element (Lemma \ref{lem:is2}). Next, every ideal contains a cofinal chain (Theorem \ref{thm:cofinal-chain}). Furthermore, if an ideal has uncountable cofinality \(\kappa\) and there is no infinite antichain, then $I$ is the linear sum of ordered sets indexed by a chain of order type $\kappa$. From which follows that an uncountable scattered interval order with no infinite antichain cannot be prime (Proposition \ref{prop:notprime}).

\begin{lemma}\label{lem:is2}Let $P$ be an interval order. Let $I$ be an initial segment in $P$ such that $I\neq \mydownarrow \max(I)$. Then $I\setminus \mydownarrow \max(I)$ is updirected with no largest element. Furthermore, $I$ is the union of an antichain and a nonprincipal ideal.
\end{lemma}
\begin{proof}Let \(S:=I\setminus \mydownarrow\max(I)\). We first prove that \(S\) is updirected. Let \(x,y\in S\). Since
\(x\notin \mydownarrow\max(I)\), \(x\) is not maximal in \(I\). Hence, there exists \(x'\in I\) such that \(x<x'\). Similarly, there exists \(y'\in I\) such that \(y<y'\). By Lemma~\ref{lem:intervalorder-prop}(iv), \(U(x)\) and \(U(y)\) are comparable under set inclusion. Suppose, without loss of generality, that
\(U(x)\subseteq U(y)\). Since \(x'\in U(x)\), we get \(x'\in U(y)\), so \(y<x'\). Therefore \(x'\) is a common upper bound of \(x\) and \(y\).

It remains to check that \(x'\in S\). Since \(x'\in I\), it is enough to show that \(x'\notin\mydownarrow\max(I)\). If \(x'\leq m\) for some
\(m\in\max(I)\), then \(x<x'\leq m\), so \(x\in\mydownarrow\max(I)\), contradicting \(x\in S\). Hence, \(x'\in S\). Thus, \(S\) is updirected.

Next we prove that $S$ has no largest element. Suppose, by way of a contradiction, that $S$ has a largest element \(x\).  Then, on one hand, $x$ is not below or equal to any maximal element of $I$, but, then, on the other hand, no other elements of $I$ is strictly above $x$, so $x$ is maximal in $I$, a contradiction.

Since $\mydownarrow{S}$ is a nonprincipal initial segment and $S$ is updirected it follows readily that $\mydownarrow{S}$ is a nonprincipal ideal.

We now prove that for each $x\in \max(I)$, $D(x)\subseteq \mydownarrow {S}$. Indeed, let $t<x$ and let $t'\in S$. Since $S$ has no largest element we infer that there exists $u\in S$ such that $t'<u$. By Lemma~\ref{lem:intervalorder-prop}(iv), $U(t)$ and $U(t')$ are comparable with respect to set inclusion. Since $t<x$ and $t'\nless x$ we infer that $U(t')\subseteq U(t)$. Hence, $t<u$ and therefore $t\in \mydownarrow {S}$. This proves $D(x)\subseteq \mydownarrow {S}$.

Hence, $\max(I)=I\setminus \mydownarrow {S}$, that is, $I=\max(I)\cup\mydownarrow {S}$. Hence, if  $I\neq \mydownarrow \max(I)$, then $I$ is the union of an antichain, $\max(I)$, and a nonprincipal ideal, $\mydownarrow (I\setminus \mydownarrow \max(I))$.
\end{proof}

\begin{corollary}\label{cor:ideal-nomax}Let $P$ be an interval order and $I$ be a nonempty initial segment of $P$. If $\max(I)=\varnothing$, then $I$ is a nonprincipal ideal.
\end{corollary}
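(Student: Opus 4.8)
This is an immediate corollary of Lemma~\ref{lem:is2}, so the plan is simply to instantiate that lemma in the degenerate situation $Max(I)=\varnothing$. First I would assume $I\neq\varnothing$ (the statement is to be read for nonempty initial segments, consistently with the convention that an ideal is nonempty). Since $Max(I)=\varnothing$ we have $\mydownarrow Max(I)=\mydownarrow\varnothing=\varnothing$; in particular $\mydownarrow Max(I)\neq I$, so the hypothesis $I\neq\mydownarrow Max(I)$ of Lemma~\ref{lem:is2} is satisfied. Applying that lemma, $I\setminus\mydownarrow Max(I)$ is a nonprincipal ideal, and since $I\setminus\mydownarrow Max(I)=I\setminus\varnothing=I$, we conclude that $I$ itself is a nonprincipal ideal, as desired.

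Alternatively, one can argue directly without citing Lemma~\ref{lem:is2}: if $I$ were not updirected there would be $x,y\in I$ with no common upper bound in $I$, hence (as $Max(I)=\varnothing$ forces $x,y$ to have upper bounds in $I$ is not needed — rather) $U(x)$ and $U(y)$ would be nonempty and disjoint, which is impossible in an interval order by Lemma~\ref{lem:intervalorder-prop}; and $I$ is nonprincipal because $I=\mydownarrow z$ would force $z\in Max(I)=\varnothing$. Either way there is essentially no obstacle: the substantive content lies entirely in Lemma~\ref{lem:is2} (equivalently, in the chain property of the up-sets of an interval order), and the only point to watch is the trivial case $I=\varnothing$, which is excluded by the convention on ideals.
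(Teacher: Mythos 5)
Your first paragraph is exactly the paper's intended derivation: the corollary is stated without a separate proof precisely because, when $Max(I)=\varnothing$, one has $\mydownarrow Max(I)=\varnothing$, so Lemma~\ref{lem:is2} applies to the nonempty initial segment $I$ and returns $I$ itself as a nonprincipal ideal; your instantiation is correct. (The parenthetical in your alternative direct argument is garbled, and the step ``no common upper bound in $I$ implies $U(x)\cap U(y)=\varnothing$ in $P$'' really rests on the chain property of up-sets from Lemma~\ref{lem:intervalorder-prop} together with $Max(I)=\varnothing$, but that alternative is not needed for the corollary.)
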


\begin{theorem}\label{thm:cofinal-chain}Every ideal in an interval order contains a cofinal chain.
\end{theorem}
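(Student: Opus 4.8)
The plan is to produce the cofinal chain as a \emph{maximal} chain of $I$. By Zorn's Lemma (the union of a chain of chains is a chain) $I$ contains a maximal chain $C$, and the whole argument consists in showing that maximality already forces $C$ to be cofinal. The proof splits according to the dichotomy: either $C$ has an upper bound in $I$, or it does not.

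Suppose first that $C$ has an upper bound $z\in I$. Then $C\cup\{z\}$ is a chain of $I$ containing $C$, so by maximality $z\in C$, i.e.\ $z=\max C$. I then claim $z=\max I$. If not, pick $y\in I$ with $y\not\le z$. If $y>z$, then $y$ lies above every element of $C$ and $C\cup\{y\}$ is a strictly larger chain; if $y\parallel z$, then updirectedness of the ideal $I$ yields $w\in I$ with $w\ge y$ and $w\ge z$, and since $y\parallel z$ we get $w>z=\max C$, so again $C\cup\{w\}$ is a strictly larger chain of $I$. Both contradict maximality of $C$, so $z=\max I$ and $C$ is cofinal.

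The substantive case is when $C$ has no upper bound in $I$, and here I would isolate a self-contained statement: \emph{a chain $C$ contained in an ideal $I$ of an interval order that has no upper bound in $I$ is cofinal in $I$}. To prove it, suppose for contradiction that $y\in I$ lies below no element of $C$. Then every $c\in C$ satisfies $c<y$ or $c\parallel y$; if $c<y$ held for all $c$, then $y$ would be an upper bound of $C$ in $I$, so there is $c_0\in C$ with $c_0\parallel y$, and the final segment $C_0:=\{c\in C: c\ge c_0\}$ of $C$ is then entirely incomparable to $y$ and still has no upper bound in $I$. Updirectedness gives $z\in I$ with $z>c_0$ and $z>y$; since $z$ is not an upper bound of $C_0$, choose $c_1\in C_0$ with $z\not\ge c_1$, and note $z<c_1$ is impossible (it would force $y<z<c_1$, contradicting $c_1\parallel y$), so $z\parallel c_1$. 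Since $c_1$ itself is not an upper bound of $C_0$, choose $c_2\in C_0$ with $c_2>c_1$; then $z>c_2$ would give $z>c_1$ and $z<c_2$ would give $y<c_2$, so $z\parallel c_2$. Now $c_1<c_2$ and $y<z$ are two disjoint $2$-element chains with all four cross pairs incomparable, i.e.\ an embedded copy of $\mathbf{2}\oplus\mathbf{2}$, contradicting Lemma~\ref{lem:intervalorder-prop}. This proves the statement, and the theorem follows since the maximal chain $C$ lies in one of the two cases.

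The step needing the most care is the last one: carefully deriving each of the incomparabilities $c_1\parallel y$, $c_2\parallel y$, $c_1\parallel z$, $c_2\parallel z$ from the relations already in hand, so that the four-element configuration really is an induced $\mathbf{2}\oplus\mathbf{2}$. One should also keep in mind \emph{why} the maximal-chain device is necessary: updirectedness of an ideal provides upper bounds only for finite subsets, so the naive attempt to build a cofinal chain by transfinite recursion stalls at limit stages — it is precisely this obstacle that the "no upper bound $\Rightarrow$ cofinal" lemma circumvents.
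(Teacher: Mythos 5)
Your proof is correct, but it follows a genuinely different route from the paper's. The paper argues by cases on the cofinality $\kappa$ of the ideal $I$: for $\kappa=1$ it takes the maximum, for $\kappa=\omega$ it invokes Milner--Pouzet, and for uncountable $\kappa$ it uses Iwamura's lemma to write $I$ as a strictly increasing union of ideals $(I_\alpha)_{\alpha<\kappa}$ and then uses the interval-order property (in the spirit of Lemma~\ref{lem:is3}) to pick $x_\alpha\in I_{\alpha+1}\setminus I_\alpha$ with $I_\alpha$ below $x_\alpha$; the resulting cofinal chain is well ordered of type $\kappa$. You instead take a Zorn-maximal chain $C\subseteq I$ and show directly that maximality forces cofinality: the bounded case gives $\max I\in C$, and the unbounded case rests on your self-contained lemma that an unbounded chain in an ideal of an interval order is automatically cofinal, proved by extracting an induced $\mathbf{2}\oplus\mathbf{2}$ from a hypothetical $y$ not dominated by $C$ (all four incomparabilities $c_1\parallel y$, $c_2\parallel y$, $c_1\parallel z$, $c_2\parallel z$ check out, and the four points are distinct). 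Your argument is more elementary and self-contained --- it avoids Iwamura/Krasner and the cofinality stratification entirely, and the ``no upper bound implies cofinal'' lemma is a clean reusable fact --- but it buys only the bare statement: the cofinal chain it produces need not be well ordered, whereas the paper's construction yields a cofinal chain of order type exactly $\mathrm{cf}(I)$, which is the sharper form one sometimes wants downstream. Both proofs locate the essential use of the interval-order hypothesis in the same place, namely $\mathbf{2}\oplus\mathbf{2}$-freedom; your closing remark about transfinite recursion stalling at limit stages correctly identifies why the statement is not a triviality about updirected sets.
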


To prove Theorem \ref{thm:cofinal-chain}, we will need the following result of Iwamura \cite{iwamura}.

\begin{lemma}\label{lem:iwamura} Let $P$ be an updirected ordered set of cofinality $\kappa$.  Then one can write $P$ as the union of a strictly increasing sequence $(I_\alpha)_{\alpha<\kappa}$ of ideals.
\end{lemma}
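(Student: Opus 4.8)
The statement is: every updirected poset $P$ of cofinality $\kappa$ can be written as the union of a strictly increasing sequence $(I_\alpha)_{\alpha<\kappa}$ of ideals. I would first dispose of the case $\kappa = \omega$ (or $\kappa$ finite): a countable cofinal chain $x_0 \le x_1 \le \cdots$ gives $I_n := \mydownarrow x_n$, and these are ideals (principal initial segments), strictly increasing after passing to a subsequence, with union $P$. So assume $\kappa$ is an uncountable regular cardinal, and fix a cofinal subset $\{p_\xi : \xi < \kappa\} \subseteq P$.

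The heart of the argument is a transfinite recursion that builds ideals $I_\alpha$ for $\alpha < \kappa$ so that: (i) each $I_\alpha$ is an ideal of $P$; (ii) $\alpha < \beta \Rightarrow I_\alpha \subsetneq I_\beta$; (iii) $p_\alpha \in I_{\alpha+1}$ (to guarantee $\bigcup_\alpha I_\alpha = P$); and (iv) at limit stages $\lambda$, $I_\lambda = \bigcup_{\alpha<\lambda} I_\alpha$ — which is again an ideal precisely because $\lambda$ has cofinality $< \kappa$, so a cofinal $<\kappa$-sequence of ideals has updirected union (given two elements, each lies in some $I_{\alpha}$, then both lie in a common later $I_\beta$, which is updirected). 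The nontrivial step is the successor stage: given an ideal $I_\alpha$ with $|I_\alpha| < \kappa$, I want an ideal $I_{\alpha+1} \supseteq I_\alpha \cup \{p_\alpha\}$ that is still of size $< \kappa$ and strictly larger. Here I would build $I_{\alpha+1}$ as an increasing union of a countable chain of sets $J_0 \subseteq J_1 \subseteq \cdots$: start with $J_0 = I_\alpha \cup \{p_\alpha\}$, and having defined $J_n$ (of size $<\kappa$, using regularity of $\kappa > \omega$ and that $|I_\alpha|<\kappa$), for every pair $x,y \in J_n$ pick an upper bound $b_{x,y} \in P$ (using updirectedness of $P$), and set $J_{n+1} = \mydownarrow(J_n \cup \{b_{x,y} : x,y \in J_n\})$. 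Then $I_{\alpha+1} := \bigcup_n J_n$ is an initial segment; it is updirected because any two of its elements lie in a common $J_n$ and hence have an upper bound in $J_{n+1} \subseteq I_{\alpha+1}$; its cardinality is $< \kappa$ by regularity (a countable union of sets of size $<\kappa$, each step adding at most $|J_n|^2$ new generators plus their down-sets — one must be slightly careful that $\mydownarrow$ of a small set is still small, which holds only if $P$ itself is, say, of size $\kappa$; if $P$ is larger, replace "$<\kappa$" throughout by "cofinality $\le$ that of $I_\alpha$", noting a cofinal subset of size $<\kappa$ suffices). Strict growth is ensured since $p_\alpha$ may or may not already be in $I_\alpha$, but in any case the cofinal set is unbounded in $P$ so we can always also throw in one more element of $P \setminus I_\alpha$ if needed.

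Finally, after the recursion, I would verify $\bigcup_{\alpha<\kappa} I_\alpha = P$: any $q \in P$, being below some $p_\xi$ in the cofinal set by updirectedness together with the fact that $\{p_\xi\}$ is cofinal — actually one needs the cofinal set to be cofinal, so $q \le p_\xi$ for some $\xi$, hence $q \in \mydownarrow p_\xi \subseteq I_{\xi+1}$. Strict increase along $\alpha < \kappa$ was arranged at each successor step and is preserved at limits by (iv). Thus $(I_\alpha)_{\alpha<\kappa}$ is the desired strictly increasing $\kappa$-sequence of ideals.

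\textbf{The main obstacle} I anticipate is the bookkeeping at the successor step: ensuring simultaneously that $I_{\alpha+1}$ is updirected (the Iwamura "closing off under upper bounds" trick, iterated $\omega$ times), that it remains small enough that the recursion does not outrun $\kappa$ (this is exactly where regularity and uncountability of $\kappa$ are used), and that it is a genuine initial segment. The limit-stage regularity argument — that a union of $<\kappa$ ideals along a chain of length of cofinality $<\kappa$ is an ideal — is the other point requiring care, and it is the reason the construction can only be indexed by a \emph{regular} cardinal; for singular $\kappa$ one passes first to its cofinality. Since the lemma is quoted from Iwamura~\cite{iwamura} and used only as a black box in what follows, I would keep this proof brief and refer the reader there for the finer details.
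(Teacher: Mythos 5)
Your proof is correct in its main line, but it takes a genuinely different route from the paper. The paper gives no self-contained recursion: it invokes Krasner's theorem (as quoted just below the lemma) to obtain an order-preserving map $f:[\kappa]^{<\omega}\to P$ with cofinal range, sets $I_\alpha:=\mydownarrow f([\alpha]^{<\omega})$, observes that these are nondecreasing ideals, and refers to \cite{delorme-pouzet} for the fact that the sequence is strictly increasing. You instead run the classical Iwamura recursion directly: at each successor stage you close off under chosen upper bounds in $\omega$ rounds, and you take unions at limits. Your argument is more elementary and self-contained; the paper's is shorter but outsources both the existence of the sequence and its strict monotonicity to external results.

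Two loose ends in your write-up deserve attention. First, as you yourself flag, the cardinality bookkeeping must be carried out on a \emph{generating set} rather than on $I_\alpha$ itself, since $\mydownarrow x$ can have arbitrary cardinality; the clean repair is to maintain $I_\alpha=\mydownarrow G_\alpha$ with $G_\alpha$ updirected and $|G_\alpha|\le |\alpha|+\aleph_0<\kappa$. This is also what guarantees $I_\alpha\neq P$ (no subset of size $<\kappa=cf(P)$ is cofinal), hence that you can always adjoin a point of $P\setminus I_\alpha$ and keep the sequence strictly increasing. Second, your closing remark that ``for singular $\kappa$ one passes first to its cofinality'' would prove the wrong statement: the lemma asks for a sequence of length $\kappa$, and the cofinality of an updirected poset can indeed be singular (e.g. $([\aleph_\omega]^{<\omega},\subseteq)$ has cofinality $\aleph_\omega$). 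Fortunately no such detour is needed: with the bound $|G_\alpha|\le|\alpha|+\aleph_0$ the same recursion works verbatim for singular infinite $\kappa$, since $|\alpha|+\aleph_0<\kappa$ for every $\alpha<\kappa$. Finally, note that the union of any chain of ideals is automatically an ideal, so no cofinality hypothesis on the limit ordinal is required at limit stages.
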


Lemma \ref{lem:iwamura} can be proved using the following result which is due to Krasner \cite{krasner} (see also \cite{delorme-pouzet}).

\begin{theorem}If $P$ up directed and $cf(P)=\kappa$ infinite, then there is an order preserving map $f: [\kappa]^{<\omega}\rightarrow P$ such that the image is cofinal in $P$.
\end{theorem}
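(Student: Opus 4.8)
The plan is to build the map $f$ explicitly by recursion on the cardinality of a finite subset of $\kappa$, using at each step only the hypothesis that $P$ is updirected. First I would fix, by the definition of cofinality, a cofinal subset $\{p_\xi : \xi<\kappa\}$ of $P$ of cardinality $\kappa$ (cardinality at most $\kappa$ would suffice). Observe also that $[\kappa]^{<\omega}$, ordered by inclusion, is itself updirected, since the union of two finite subsets of $\kappa$ is a finite subset of $\kappa$ lying above both; so the statement concerns a map between two updirected posets.

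Next I would define $f(s)$ for $s\in[\kappa]^{<\omega}$ by recursion on $n:=|s|$. Set $f(\varnothing):=p_0$. Assuming $f$ has already been defined on every subset of $\kappa$ of size $<n$, and given $s$ with $|s|=n\geq 1$, consider the finite nonempty set $F_s:=\{f(s') : s'\subsetneq s\}\cup\{p_\xi : \xi\in s\}$; since $P$ is updirected, $F_s$ has an upper bound in $P$, and I would let $f(s)$ be any such upper bound. The recursion is on the natural number $|s|$, so it is well founded; performing all these choices simultaneously uses the axiom of choice.

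It then remains to check the two asserted properties, and this is routine. For order preservation: if $s\subseteq t$ in $[\kappa]^{<\omega}$ then either $s=t$, where there is nothing to prove, or $s\subsetneq t$, in which case $f(s)\in F_t$ by construction, whence $f(s)\leq f(t)$. For cofinality of the image: given $p\in P$, choose $\xi<\kappa$ with $p\leq p_\xi$ (possible since $\{p_\xi\}_{\xi<\kappa}$ is cofinal in $P$); then $p_\xi\in F_{\{\xi\}}$, so $p\leq p_\xi\leq f(\{\xi\})$ and $f(\{\xi\})$ is in the image. I do not expect a genuine obstacle; the only delicate point is to include in $F_s$ \emph{all} the previously constructed values $f(s')$ with $s'\subsetneq s$, and not just the generators $p_\xi$ for $\xi\in s$ --- this is exactly what forces $f$ to be monotone --- together with the observation that this is legitimate because each such $s'$ has size strictly smaller than $|s|$, so $f(s')$ is already available at the stage where $f(s)$ is defined.
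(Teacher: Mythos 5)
Your proof is correct and complete. The paper itself does not prove this statement --- it records it as Krasner's result with a citation to \cite{krasner} and \cite{delorme-pouzet} --- so there is no in-paper argument to compare against; your recursion on $|s|$ is the standard one, and the key point you correctly identify (folding all previously defined values $f(s')$ for $s'\subsetneq s$ into the finite set $F_s$ being bounded, so that monotonicity is forced rather than hoped for) is exactly what makes it work, the only implicit step being the routine induction showing that an updirected poset bounds every finite nonempty subset, not just pairs.
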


To prove Theorem \ref{thm:cofinal-chain}, we will also need the following lemma.

\begin{lemma}\label{lem:is3}Let $P$ be an interval order, $I$ and $J$ be two ideals of $P$. If $I$ and $J$ are not principal, then $I$ and $J$ are comparable with respect to set inclusion. Moreover, if $I\subsetneq J$, then there exists $a\in J\setminus I$ such that $I \subseteq \mydownarrow a$.
\end{lemma}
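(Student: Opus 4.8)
The plan is to exploit the interval-order characterization from Lemma \ref{lem:intervalorder-prop}, namely that $\{U(x):x\in V\}$ is totally ordered by inclusion, and to combine it with the updirectedness of ideals. First I would prove comparability. Suppose for contradiction that $I$ and $J$ are incomparable, so there exist $a\in I\setminus J$ and $b\in J\setminus I$. Since $I$ is a nonprincipal ideal, $a$ is not a maximal element of $I$, so there is $a'\in I$ with $a<a'$; likewise there is $b'\in J$ with $b<b'$. I claim $a$ and $b$ are incomparable: if $a\leq b$ then $a\in J$ (as $J$ is an initial segment), contradiction, and symmetrically $b\leq a$ is impossible. Now consider $U(a)$ and $U(b)$. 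By Lemma \ref{lem:intervalorder-prop}$(iv)$ they are comparable, say $U(a)\subseteq U(b)$ (the other case being symmetric). Since $a<a'$ we have $a'\in U(a)\subseteq U(b)$, so $b<a'$, hence $a'\in I$ forces $b\in I$ — contradiction. (One must check $U(a),U(b)$ are both nonempty, which holds because $a<a'$ and $b<b'$.) Thus $I$ and $J$ are comparable.

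For the second assertion, assume $I\subsetneq J$ and pick any $c\in J\setminus I$. If $c$ is already an upper bound of $I$ (i.e.\ $x\leq c$ for all $x\in I$) we are done with $a=c$, since then $I\subseteq\mydownarrow c$ and $c\notin I$. Otherwise there is $x_0\in I$ with $x_0\nleq c$; using updirectedness of $J$ there is $d\in J$ with $c\leq d$ and $x_0\leq d$. The element $d$ still lies outside $I$: if $d\in I$ then $c\in I$ since $c\leq d$ and $I$ is an initial segment, contradiction. So replacing $c$ by such a $d$, I would like to argue that one can reach an upper bound of $I$. The cleanest route: since $J$ is nonprincipal and $I$ is a proper sub-initial-segment, consider the set $I':=I\setminus\mydownarrow Max(I)$ versus the structure of $J$; but more directly, I would use the following. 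For every $x\in I$, the sets $U(x)$ for $x\in I$ are totally ordered by inclusion, and I want an element of $J\setminus I$ lying below nothing of $I$ in the "wrong" way — precisely, an $a\in J\setminus I$ with $I\cap U(a)=\varnothing$, which is exactly $I\subseteq\mydownarrow a$ together with $a\notin I$ (note $a\notin I$ already gives $a\not<$ any element of $I$... careful: $a\notin I$ does not by itself forbid $a<x$ for some $x\in I$, since $I$ is an initial segment that would force $a\in I$; so in fact $a\notin I$ already implies $a$ is not below any element of $I$). Hence the only obstruction to $I\subseteq\mydownarrow a$ is the existence of $x\in I$ incomparable to $a$.

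So the real task is: find $a\in J\setminus I$ comparable to every element of $I$. I would take $a\in J\setminus I$ arbitrary; the set $B:=\{x\in I: x\incg a\}$ of elements of $I$ incomparable to $a$ is the obstruction. If $B=\varnothing$ we are done. If $B\neq\varnothing$, pick $x\in B$; since $I$ is nonprincipal there is $x'\in I$ with $x<x'$. Now $U(a)$ and $U(x)$ are comparable by Lemma \ref{lem:intervalorder-prop}$(iv)$; since $x'\in U(x)$ and $a\incg x$ forces $a\notin U(x)$ while $x\notin U(a)$, and since $a\in J\setminus I$ means $a$ has an upper bound $a''\in J$ (nonprincipality), we get $a''\in U(a)$. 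If $U(a)\subseteq U(x)$ then $a''\in U(x)$, i.e.\ $x<a''\in J$, consistent; if $U(x)\subseteq U(a)$ then $x'\in U(a)$, i.e.\ $a<x'\in I$, so $a\in I$, contradiction. Hence $U(a)\subsetneq U(x)$ for every $x\in B$. Now use updirectedness: pick $a''\in J$ above $a$ and above one such $x$; then $a''\notin I$ (else $a\in I$), and $a''$ dominates strictly more of $I$ than $a$ does — because $U(a'')\subseteq U(a)\subsetneq U(x)$ shows $x$ is now $\leq a''$ or incomparable... I would then iterate this, or better, argue by choosing $a$ to have $U(a)$ minimal among $\{U(y):y\in J\setminus I\}$, which is possible since this family, being a subfamily of a chain with no infinite antichain restrictions needed here, has... and conclude $B=\varnothing$. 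The main obstacle is exactly making this minimality/iteration argument rigorous: showing that an element $a\in J\setminus I$ with $U(a)$ as small as possible (in the total order of $U$-sets) must satisfy $B=\varnothing$, using the strict inequality $U(a)\subsetneq U(x)$ derived above together with updirectedness to produce a contradiction if $B\neq\varnothing$. I expect this to follow by noting that an upper bound $a''\in J$ of $a$ and $x\in B$ lies in $J\setminus I$ with $U(a'')\subseteq U(a)$, and comparing with the minimality of $U(a)$ yields $U(a'')=U(a)$, whence $x\leq a''$ is incompatible with $a\incg x$ and $U(a'')=U(a)$ — giving the desired contradiction and completing the proof.
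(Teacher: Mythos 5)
Your comparability argument is correct and is a legitimate alternative to the paper's: the paper observes that $K=I\cup J$ is an initial segment with no maximal elements, hence an ideal by Corollary \ref{cor:ideal-nomax}, and reads comparability off updirectedness of $K$; you instead compare $U(a)$ and $U(b)$ for $a\in I\setminus J$, $b\in J\setminus I$ via Lemma \ref{lem:intervalorder-prop}$(iv)$. Both work, and yours is self-contained.

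The ``moreover'' part, however, has a genuine gap that you yourself flag. Your plan hinges on choosing $a\in J\setminus I$ with $U(a)$ minimal in the chain $\{U(y):y\in J\setminus I\}$, but nothing in the hypotheses guarantees such a minimal element exists: this family is totally ordered by inclusion, yet a chain need not have a least element, and the lemma assumes no well-foundedness or finite-antichain condition (already for $P=\QQ$, $J=\QQ$, $I=\{q<0\}$ the sets $U(q)$, $q\ge 0$, form a chain with no minimum). Your sentence justifying the choice trails off precisely at this point, and the subsequent one-step improvement $U(a'')\subsetneq U(a)$ only yields a contradiction \emph{with} that unavailable minimality. The fix is to abandon the minimality scheme and argue directly, as the paper does: pick $v\in J\setminus I$ and, by nonprincipality of $J$, some $v'\in J$ with $v<v'$; then $v'\in J\setminus I$ (else $v\in I$), and if some $u\in I$ satisfied $u\not\le v'$, nonprincipality of $I$ gives $u<u'\in I$, and the two $2$-chains $v<v'$ and $u<u'$ violate the ${\rm \bf 2}\oplus{\rm \bf 2}$-free characterization of interval orders, since $u<v'$ fails by choice of $u$ and $v<u'$ would put $v$ in $I$. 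Hence $I\subseteq\mydownarrow v'$. (Your own first idea in that paragraph --- replace $c$ by a common upper bound $d\in J$ of $c$ and a witness $x_0$ --- can also be closed off in one step by the same ${\rm \bf 2}\oplus{\rm \bf 2}$ argument applied to the pair $c<d$, but as written you neither carry this out nor justify the minimality route, so the proof is incomplete.)
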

\begin{proof}Consider $K=I\cup J$. Then $K$ is an initial segment having no maximal elements. It follows from  Corollary \ref{cor:ideal-nomax} that $K$ is an ideal and is in particular updirected. It follows that $I$ and $J$ are comparable with respect to set inclusion. Indeed, if not, then $I\setminus J\neq \varnothing \neq  J\setminus I$ and let $i\in I\setminus J$ and $j\in J\setminus I$. Because $K$ is an ideal there exists $k\in K$ above $i$ and $j$. Since $K=I\cup J$ we infer that $k\in I$ or $k\in J$ leading to $j\in I$ or $i\in J$. A contradiction.

For the moreover, suppose \(I\subsetneq J\), and choose \(v\in J\setminus I\). Since \(J\) is nonprincipal, \(J\) has no largest element. Hence, there exists \(v'\in J\) such that \(v<v'\). Since \(v\notin I\) and \(I\) is an initial segment, we must also have \(v'\notin I\).

We claim that \(I\subseteq \mydownarrow v'\). Suppose not. Then there exists \(u\in I\) such that \(u\nleq v'\). Since \(I\) is nonprincipal, it has no largest element. Hence, there exists \(u'\in I\) such that \(u<u'\). Because \(v\notin I\) and \(u'\in I\), we cannot have \(v\leq u'\). Hence, \(v\) and \(u'\) are incomparable.

Similarly, since \(u\nleq v'\) and \(u\in I\subseteq J\), while \(v'\in J\),
we cannot have \(v'\leq u\), for otherwise \(v'\in I\) by initiality of \(I\),
contrary to \(v'\notin I\). Hence, \(u\) and \(v'\) are incomparable.

Thus, \(u<u', v<v'\), and  \(u\) incomparable with \(v'\) and \(v\) incomparable with \(u'\).  Therefore \(\{u,u',v,v'\}\) induces a \(2\oplus2\), contradicting that \(P\) is an interval order.

Hence, \(I\subseteq \mydownarrow v'\). Taking \(a:=v'\) proves the required conclusion.
\end{proof}

\begin{proof}(Of Theorem \ref{thm:cofinal-chain})
 Let $I$ be an ideal in an interval order. Let $\kappa=cf(I)$. If $cf(I)=1$, then $I$ has a largest element. If $cf(I)=\omega$, then there exists a cofinal chain of order type $\omega$ \cite{milner-pouzet}. For larger cofinalities, we use Lemma \ref{lem:iwamura}.  Let $(I_\alpha)_\alpha<\kappa$ be a strictly increasing sequence of ideals whose union is $I$. We prove that there exists \(x_\alpha\in I_{\alpha+1}\setminus I_\alpha\) such that \(I_\alpha\subseteq \mydownarrow x_\alpha\).

  Suppose \(I_{\alpha+1}\) is principal, that is, \(I_{\alpha+1}=\mydownarrow p\) for some \(p\in I_{\alpha+1}\). Then necessarily \(p\not\in I_{\alpha}\). Indeed, suppose that \(p\in I_\alpha\). Since \(I_\alpha\) is an initial segment, we have
\(
\mydownarrow p\subseteq I_\alpha.
\)
But
\(
\mydownarrow p=I_{\alpha+1},
\)
and therefore
\(
I_{\alpha+1}\subseteq I_\alpha,
\)
contradicting
\(
I_\alpha\subsetneq I_{\alpha+1}.
\)
Hence, \(p\notin I_\alpha\). Thus, \(p\in I_{\alpha+1}\setminus I_{\alpha}\) and \(I_{\alpha}\subseteq I_{\alpha+1}= \mydownarrow p\).  Set \(x_\alpha:=p\). Next we suppose that \(I_{\alpha+1}\) is not principal.

 Suppose \(I_{\alpha}\) is principal, that is, \(I_{\alpha}=\mydownarrow p\) for some \(p\in I_{\alpha}\). Choose \(q\in I_{\alpha+1}\setminus I_{\alpha}\). Since \(I_{\alpha+1}\) is updirected, there exists \(r\in I_{\alpha+1}\) such that \(p,q\leq r\). Then clearly \(r\not \in I_{\alpha}\) and \(I_{\alpha}\subseteq \mydownarrow r\).  Set \(x_\alpha:=r\). Next we suppose that \(I_{\alpha}\) is not principal.

We are left with the case both \(I_{\alpha}\) and \(I_{\alpha+1}\) are nonprincipal. By Lemma \ref{lem:is3} applied to \(I_\alpha\subsetneq I_{\alpha+1}\),  there exists \(x_\alpha\in I_{\alpha+1}\setminus I_\alpha\) such that \(I_\alpha\subseteq \mydownarrow x_\alpha\).

We now show that the sequence \((x_\alpha)_{\alpha<\kappa}\) is increasing.
Let \(\alpha<\beta<\kappa\). By construction, \(x_\alpha\in I_{\alpha+1}\). Since the sequence \((I_\alpha)_{\alpha<\kappa}\) is increasing and
\(\alpha+1\leq \beta\), we have \(I_{\alpha+1}\subseteq I_\beta\). Also, by the choice of \(x_\beta\), \(I_\beta\subseteq \downarrow x_\beta\). Therefore, \(x_\alpha\in I_{\alpha+1}\subseteq I_\beta\subseteq \downarrow x_\beta\). Hence, \(x_\alpha\leq x_\beta\). Thus \((x_\alpha)_{\alpha<\kappa}\) is an increasing chain.

Finally, we prove that this chain is cofinal in \(I\). Let \(u\in I\). Since \(I=\cup_{\alpha<\kappa} I_\alpha\),
there exists \(\alpha<\kappa\) such that \(u\in I_\alpha\). By the choice of \(x_\alpha\), we have \(I_\alpha\subseteq \downarrow x_\alpha
\). Therefore, \(u\leq x_\alpha\). Hence, \((x_\alpha)_{\alpha<\kappa}\) is cofinal in \(I\).
\end{proof}

An ordered set $P$ is \emph{pure} if every proper initial segment $I$ has an upper bound in $P\setminus I$. This condition amounts to the fact that every noncofinal subset of P is strictly bounded above (indeed, if a subset $A$ of $P$ is not cofinal, then $\mydownarrow A \neq P$ hence from purity, $\mydownarrow A$, and thus $A$, is strictly bounded above. The converse is immediate).

We extract the following result from Theorem 24 of \cite{assous-pouzet}. %For reader's convenience we provide a proof.

\begin{theorem}\label{thm:assous-pouzet}Let $P$ be an ordered set with infinite uncountable cofinality $\nu$. Then $P$ is pure if and only if $P$ is a lexicographical sum $\sum_{\alpha\in K} P_\alpha$ where $K$ is a chain of order type $\nu$.
\end{theorem}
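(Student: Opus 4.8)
The plan is to treat the two implications separately; $(\Leftarrow)$ is routine and the content lies in $(\Rightarrow)$.

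For $(\Leftarrow)$, suppose $P=\sum_{\alpha\in K}P_\alpha$ with $K$ a chain of order type $\nu$. Since $\nu$ is an infinite cardinal it is a limit ordinal, so $K$ has a least element and no greatest one. Given a proper initial segment $I$ of $P$, let $\gamma$ be the least index with $V_\gamma\not\subseteq I$; it exists because $I\neq P$. Then $\gamma+1$ is again an index of the sum, $V_{\gamma+1}$ is disjoint from $I$ (otherwise $V_\gamma$, lying entirely below some element of $V_{\gamma+1}\cap I$, would be contained in $I$), and any element $b\in V_{\gamma+1}$ lies above every element of $I$: an element of $I$ cannot sit in a component $V_\delta$ with $\delta>\gamma$, since then $V_\gamma$ would lie below it and hence be contained in the downward closure $\mydownarrow b'\subseteq I$, contradicting the choice of $\gamma$. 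Thus every proper initial segment is strictly bounded above by an element of its complement, i.e. $P$ is pure.

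For $(\Rightarrow)$, assume $P$ is pure and $cf(P)=\nu$ is uncountable, hence regular. Call an initial segment $S$ of $P$ a \emph{cut} if $x<y$ for every $x\in S$ and every $y\in P\setminus S$. Cuts are linearly ordered by inclusion and any intersection of cuts is a cut, so each $x\in P$ is contained in a smallest cut $c(x)$, and $x\mapsto c(x)$ is order preserving. The crucial observation is: if $c(x)\subsetneq c(y)$ then $c(x)$ is a cut containing $x$ but not $y$, whence $x<_P y$ by the cut property. Consequently incomparable elements have the same principal cut, and $x<_P y$ holds exactly when $c(x)\subsetneq c(y)$, or $c(x)=c(y)$ and $x<y$ inside the fibre. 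Writing $V_c:=\{x:c(x)=c\}$ and $K:=\{c(x):x\in P\}$ ordered by inclusion, this says precisely that $P=\sum_{c\in K}V_c$, a lexicographical sum over the chain $K$ (the blocks are nonempty and partition $P$). Next one shows $cf(K)=\nu$: choosing a cofinal $F\subseteq P$ with $|F|=\nu$, the set $\{c(f):f\in F\}$ is cofinal in $K$, so $cf(K)\le\nu$; conversely, provided $K$ has no greatest element, a cofinal $G\subseteq K$ gives the proper initial segments $\bigcup_{c\le g}V_c$ ($g\in G$) whose union is $P$, and purity bounds each above by an element of its complement, yielding a cofinal subset of $P$ of size $|G|$, so $cf(P)\le cf(K)$. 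Finally, since $\nu$ is regular, $K$ has a cofinal well-ordered subset $(k_\alpha)_{\alpha<\nu}$ of order type $\nu$; partitioning $K$ into the consecutive intervals cut out by this sequence — each nonempty, and with every block in an earlier interval below every block in a later one — and amalgamating the blocks of the lexicographical sum accordingly rewrites $P$ as $\sum_{\alpha\in K'}P_\alpha$ with $K'$ of order type $\nu$, as required.

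The step I expect to be the main obstacle is justifying that $K$ has no greatest element, i.e. ruling out the degenerate case in the $cf(K)=\nu$ argument in which $P$ has a ``top block'' $V_c$ that is itself \emph{cut-irreducible} (its only cuts are $\varnothing$ and $V_c$). In that situation the construction above would only produce a lexicographical sum over a chain with a maximum, and the theorem would genuinely fail. One therefore needs the sub-lemma — the substance of Theorem 24 of \cite{assous-pouzet} — that a pure poset with uncountable cofinality always admits a nontrivial cut, and it is precisely here that uncountability of $\nu$ is used (it is false for $cf=\omega$). A natural route: a pure poset of infinite cofinality is up-directed and, using purity, admits a cofinal chain of order type $\nu$; assuming cut-irreducibility, the fact that no principal down-set $\mydownarrow x$ can be a cut forces along such a chain an unbounded accumulation of incomparabilities that is incompatible with $\nu$ being regular and uncountable.
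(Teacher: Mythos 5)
The paper does not actually prove this statement; it is imported verbatim from Theorem 24 of \cite{assous-pouzet}, so there is no internal argument to compare yours against. Your scaffolding is correct: the $(\Leftarrow)$ direction works, the cut decomposition $P=\sum_{c\in K}V_c$ is the right object, and the cofinality transfer between $P$ and $K$ plus the final amalgamation along a cofinal $\nu$-sequence of $K$ are routine once $K$ is known to have no greatest element. But that is precisely where the entire content of the theorem sits, and there your proposal stops at a promissory note: the sub-lemma that a pure poset of uncountable cofinality admits a nontrivial cut is ``proved'' only by the phrase about an ``unbounded accumulation of incomparabilities incompatible with $\nu$ regular and uncountable,'' which restates the goal rather than arguing for it. The cut decomposition exists for every poset and is vacuous when the only cuts are $\varnothing$ and $P$, so without this sub-lemma the hard direction is not established. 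This is a genuine gap, not a presentational one.

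The gap can be closed, and it is worth recording that the missing step is a concrete closure-point argument. Since no set of size $<\nu$ is cofinal, purity lets you build by recursion a strictly increasing cofinal chain $(c_\alpha)_{\alpha<\nu}$ (at stage $\alpha$ take a strict upper bound of the proper initial segment generated by the previously chosen elements together with the first $\alpha$ members of a fixed cofinal set). For each $\alpha$, the set $A_\alpha:=\{y\in P: y\not\geq c_\alpha\}$ cannot be cofinal, since otherwise $c_\alpha\in\mydownarrow A_\alpha$ would give $c_\alpha\leq y$ for some $y\not\geq c_\alpha$; hence by purity $A_\alpha\subseteq\mydownarrow c_{g(\alpha)}$ for some $g(\alpha)<\nu$. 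Because $\nu$ is regular and uncountable, the function $g$ has unboundedly many limit closure points $\lambda<\nu$, and for each such $\lambda$ the set $S_\lambda:=\{x: x\leq c_\beta\text{ for some }\beta<\lambda\}$ is a proper nontrivial cut: if $x\leq c_\beta$ with $\beta<\lambda$ and $z\notin S_\lambda$ is not strictly above $x$, then $z\not\geq c_\beta$, so $z\leq c_{g(\beta)}$ with $g(\beta)<\lambda$, contradicting $z\notin S_\lambda$. These cuts cover $P$, so $K$ has no greatest element and your argument then runs to completion. Note that uncountability is used exactly in producing closure points of $g$ below $\nu$; for $\nu=\omega$ the statement fails (e.g.\ the prime pure poset on $\{a_n,b_n:n\in\NN\}$ with $a_n,b_n$ incomparable and both below $a_{n+1}$ and $b_{n+1}$), so no argument avoiding this point could succeed.
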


We will need the next three lemmas to prove that an uncountable scattered interval order with no infinite antichains cannot be prime.

\begin{lemma}\label{lem:ideal-pure}Let $P$ be an interval order with no infinite antichain. Then every ideal in $P$ is pure.
\end{lemma}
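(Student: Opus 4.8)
The plan is to show that an ideal $I$ in an interval order $P$ with no infinite antichain is pure, i.e.\ that every proper initial segment $J$ of $I$ has an upper bound lying in $I\setminus J$. First I would dispose of the easy cases: if $I$ has a largest element (equivalently $cf(I)=1$), then $I$ itself is principal and purity is immediate because any proper initial segment $J$ of $I$ is dominated by the top of $I$, which lies in $I\setminus J$. So assume $I$ is a nonprincipal ideal and let $J\subsetneq I$ be a proper initial segment. The goal is to produce $a\in I\setminus J$ with $J\subseteq\mydownarrow a$.

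Next I would split on the structure of $J$ using Lemma \ref{lem:is2} applied inside $I$ (which is itself an interval order, being a suborder of $P$). If $J=\mydownarrow_{I}Max(J)$, then $Max(J)$ is a nonempty antichain, hence finite since $P$ has no infinite antichain; say $Max(J)=\{b_1,\dots,b_k\}$. Because $I$ is updirected, the finitely many $b_i$ have a common upper bound $c$ in $I$, and since $J$ is a proper initial segment of $I$ we may, using nonprincipality of $I$, push $c$ strictly up to some $a\in I$ with $c<a$; then $a\notin J$ (it dominates $Max(J)$ strictly, or lies above it and is not among the $b_i$) and $J=\mydownarrow Max(J)\subseteq\mydownarrow a$. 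If instead $J\neq\mydownarrow Max(J)$, then by Lemma \ref{lem:is2} the set $J':=J\setminus\mydownarrow Max(J)$ is a nonprincipal ideal of $I$ (hence of $P$), and $J$ itself is the union of the finite antichain $Max(J)$ with $J'$. Since $J\subsetneq I$ and both $J'$ and $I$ are nonprincipal ideals, Lemma \ref{lem:is3} gives $J'\subsetneq I$ together with an element $a'\in I\setminus J'$ such that $J'\subseteq\mydownarrow a'$; then combining $a'$ with a common upper bound in $I$ of the finite set $Max(J)$ and using updirectedness and nonprincipality of $I$ once more yields a single $a\in I$ with $J\subseteq\mydownarrow a$, and one checks $a$ can be chosen outside $J$ (if the natural candidate lands in $J$, it lies in $\mydownarrow Max(J)$, so a further strict step up inside the nonprincipal ideal $I$ fixes it, after verifying no ${\rm\bf 2}\oplus{\rm\bf 2}$ is created — exactly the kind of argument used in Lemmas \ref{lem:is3} and \ref{lem:is4}).

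The main obstacle I anticipate is the bookkeeping in the last case: ensuring that the upper bound $a$ can be taken to dominate \emph{all} of $J$ simultaneously (both the nonprincipal tail $J'$ and the finite antichain $Max(J)$) while still staying strictly outside $J$, and confirming at each step that introducing $a$ does not embed ${\rm\bf 2}\oplus{\rm\bf 2}$. I expect this to go through by the same ${\rm\bf 2}\oplus{\rm\bf 2}$-avoidance technique already deployed repeatedly above: whenever two incomparable elements with independent strict successors would appear, one derives a forbidden ${\rm\bf 2}\oplus{\rm\bf 2}$. The finiteness of antichains is used in exactly one place — to make $Max(J)$ finite so that updirectedness of $I$ yields a single common upper bound — which is consistent with the way the hypothesis is invoked throughout this section.
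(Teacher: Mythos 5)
Your proof is correct and takes essentially the same route as the paper's: decompose the relevant initial segment via Lemma \ref{lem:is2} into a nonprincipal ideal together with the down-set of a (necessarily finite) antichain, majorize the antichain part by updirectedness of $I$ and the ideal part by the ``moreover'' clause of Lemma \ref{lem:is3}, and then take a common upper bound. The only cosmetic differences are that the paper applies this to $Q:=P\setminus\myuparrow x$ rather than directly to an arbitrary proper initial segment of $I$, and that your concern about ``introducing'' a ${\rm \bf 2}\oplus{\rm \bf 2}$ is moot, since you only ever select elements already present in $I$.
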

\begin{proof}
Let \(I\) be an ideal of \(P\). If \(I\) has a largest element, then \(I\) is
pure. Suppose therefore that \(I\) has no largest element.

Let \(J\subsetneq I\) be a proper initial segment. We show that \(J\) has an
upper bound in \(I\setminus J\).

First suppose that \(J=\mydownarrow \max(J)\). Since \(P\) has no infinite antichain, \(\max(J)\) is finite. Since \(I\) is
updirected, there exists \(b\in I\) such that
\(
m\leq b \text{ for every }m\in \max(J).
\)
Since \(I\) has no largest element, choose \(c\in I\) such that \(b<c\). Then \(c\notin J\), because if \(c\in J\), then \(c\leq m\) for some
\(m\in\max(J)\), while \(m\leq b<c\), a contradiction. Hence, \(c\in I\setminus J\) and  \(c\) is an upper bound of \(J\).

Now suppose that \(J\neq \mydownarrow \max(J)\).  By Lemma~\ref{lem:is2}, we may write \(J=A\cup K\), where \(A=\max(J)\) is an antichain and \(K\) is a nonprincipal ideal. Since \(P\) has no infinite antichain, \(A\) is finite. By Lemma~\ref{lem:is3}, applied to the nonprincipal ideals \(K\) and \(I\),
there exists \(a\in I\setminus K\) such that \(K\subseteq \mydownarrow a\). Now use the updirectedness of \(I\). Since \(A\) is finite and \(a\in I\),
there exists \(b\in I\) such that
\(
a\leq b
\quad\text{and}\quad
m\leq b\text{ for every }m\in A.
\)
Since \(I\) has no largest element, choose \(c\in I\) such that \(b<c\).  Then \(c\notin J\). Indeed, if \(c\in J\), then either \(c\in K\) or
\(c\in A\). If \(c\in K\), then \(c\leq a\leq b<c\), impossible. If \(c\in A=\max(J)\), then \(c\leq b<c\), again impossible. Thus, \(c\in I\setminus J\). Moreover, \(c\) is above every element of \(K\), since \(K\subseteq\mydownarrow a
\subseteq \mydownarrow c\), and \(c\) is above every element of \(A\). Therefore
\(c\) is an upper bound of \(J\) in \(I\setminus J\).

Hence, every proper initial segment \(J\) of \(I\) has an upper bound in \(I\setminus J\). Therefore \(I\) is pure.
\end{proof}

\begin{lemma}\label{lem:is4}Let $P=(V,\leq)$ be an interval order and $I$ be a nonprincipal ideal and
\[
I^+:=\{x\in V\setminus I:\exists y\in I,\ y<x\}
\]
Then $A:=V\setminus (I\cup I^+)$ is an antichain and every element of $A$ is incomparable with every element of $I$.
\end{lemma}
\begin{proof}
First we show that every element of \(A\) is incomparable with every element
of \(I\). Let \(a\in A\) and \(u\in I\). We cannot have \(a<u\), since
\(I\) is an initial segment and \(u\in I\); this would imply \(a\in I\),
contrary to \(a\in A\). We also cannot have \(u<a\), because then
\(a\in I^+\), again contrary to \(a\in A\). Hence, \(a\) is incomparable with \(u\).

It remains to prove that \(A\) is an antichain. Suppose, for a contradiction, that there exist \(v,v'\in A\) such that \(v<v'\).

Choose \(u\in I\). Since \(I\) is a nonprincipal ideal, it has no largest
element. Hence, there exists \(u'\in I\) such that \(u<u'\).

By the first paragraph, both \(v\) and \(v'\) are incomparable with every
element of \(I\). In particular, \(u\) and \(u'\) are both incomparable with \(v\) and \(v'\). Thus, the four elements \(u,u',v,v'\) induce a copy of \(2\oplus 2\), with \(u<u'\) and \(v<v'\). This contradicts the fact that \(P\) is an interval order. Therefore \(A\) is an antichain.
\end{proof}

\begin{lemma}\label{lem:notprime}
Let \(P=(V,\leq)\) be an interval order with no infinite antichains. If some
nonprincipal ideal \(I\) of \(P\) is a lexicographical sum
\[
I=\sum_{\alpha<\xi}K_\alpha,
\]
where \(\xi\) is a limit ordinal and \(K_\alpha\neq\varnothing\) for every \(\alpha<\xi\), then \(P\) is not prime.
\end{lemma}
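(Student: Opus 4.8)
Suppose $P=(V,\leq)$ is an interval order with no infinite antichain, and some ideal $I$ of $P$ decomposes as a lexicographical sum $\sum_{\alpha<\xi} K_\alpha$ with $\xi$ infinite and each $K_\alpha$ nonempty. The goal is to produce a nontrivial module in $P$, thereby showing $P$ is not prime. The natural candidate for a module is one of the pieces $K_\alpha$, or a union $\bigcup_{\alpha\in J}K_\alpha$ for a suitable interval $J$ of $\xi$ — but such a set need only be a module \emph{relative to $I$}, and the content of the lemma is to leverage the structure of interval orders (no $\mathbf 2\oplus\mathbf 2$, Lemma \ref{lem:intervalorder-prop}) to see that it is in fact a module in the whole of $P$.

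**Plan.** The plan is to use the material already developed about ideals in interval orders. First I would observe that $I$ itself, being an ideal with the nontrivial decomposition $\sum_{\alpha<\xi}K_\alpha$, has infinite cofinality: the chain $\xi$ is cofinal in the index set, so the tail pieces witness that no finite subset is cofinal; in particular $I$ has no largest element and is a nonprincipal ideal. Then I would invoke Lemma \ref{lem:is4}: writing $I^+:=\{x\in V: x>y\text{ for some }y\in I\}$, the set $A:=V\setminus(I\cup I^+)$ is an antichain, and every element of $A$ is incomparable to every element of $I$, while every element of $I^+$ is strictly above every element of $I$ (for $x\in I^+$ with $x>y_0\in I$, and any $y\in I$, update $y_0$ to a common upper bound $y'\in I$ of $y_0,y$ using that $I$ is updirected — then $x>y'\geq y$). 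Thus, as seen from the outside, every vertex of $V\setminus I$ sits in a fixed position (either wholly above $I$, or wholly incomparable to $I$) relative to \emph{all} of $I$.

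**Extracting the module.** Now I would argue that a proper nonempty initial segment of $I$ of the form $I':=\bigcup_{\alpha<\beta}K_\alpha$ (for $0<\beta<\xi$, which is itself an ideal of $P$ by Corollary \ref{cor:ideal-nomax} provided it has no maximal element — one can arrange this by picking $\beta$ a limit ordinal, or more simply work with the nonprincipal ideal $I'$ obtained from Lemma \ref{lem:is2}) gives rise to a nontrivial module. The key point: a vertex $x\in V\setminus I$ relates to the pieces $K_\alpha$ in one of exactly two ways — above all of $I$ (so above every $K_\alpha$) or incomparable to all of $I$ (so incomparable to every $K_\alpha$) — hence relates to $I'$ and to $I\setminus I'$ in the same way. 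And within $I$, every element of $I\setminus I'$ is $>$ every element of $I'$ by definition of the lexicographical sum. Therefore $I'$ satisfies the module condition in $P$: for $x\notin I'$, whether $x\in I\setminus I'$ or $x\in V\setminus I$, the truth values of $x\leq a$ and $a\leq x$ do not depend on which $a\in I'$ is chosen. Since $\xi$ is infinite we can choose $\beta$ with $1<\beta<\xi$, making $|I'|\geq 2$ and $I'\neq V$ (indeed $I'\subsetneq I\subsetneq V$ as $I$ is nonprincipal, or at worst $I\subseteq V$ with $I\setminus I'\neq\varnothing$), so $I'$ is a nontrivial module and $P$ is not prime.

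**Main obstacle.** The only delicate point is the bookkeeping that turns the lexicographic pieces into a genuine \emph{ideal} of $P$ (so that Lemmas \ref{lem:is3}, \ref{lem:is4} apply cleanly) and the verification that an element of $I^+$ really dominates \emph{all} of $I'$, not just some element of it — this is exactly where updirectedness of the ideal $I$ is used. Once those two observations are in hand, the module condition for $I'$ is a direct case check against the three types of outside vertices (members of $I\setminus I'$, members of $I^+$, members of the antichain $A$), none of which can "see a difference" inside $I'$.
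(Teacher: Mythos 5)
There is a genuine gap at the step where you claim that every element of $I^+$ is strictly above every element of $I$. The justification offered --- take a common upper bound $y'\in I$ of $y_0$ and $y$, ``then $x>y'\geq y$'' --- is a non sequitur: from $x>y_0$ and $y'\geq y_0$ nothing gives $x>y'$ (that is the wrong direction of transitivity). Moreover the claim itself is false. Take $I$ to be a chain $a_0<a_1<a_2<\cdots$ of type $\omega$ (so $K_\alpha=\{a_\alpha\}$) and adjoin a single element $x$ with $x>a_0$ and $x$ incomparable to every $a_i$ with $i\geq 1$; this is an interval order (represent $a_i$ by $[i,i]$ and $x$ by $(1/2,\infty)$), $I$ is a nonprincipal ideal, and $x\in I^+$ dominates only $a_0$. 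In this example your candidate module $I'=\bigcup_{\alpha<\beta}K_\alpha$, say $\{a_0,a_1\}$, is not a module: $x$ is above $a_0$ but incomparable to $a_1$. So the case check ``members of $I^+$ cannot see a difference inside $I'$'' breaks down, and an initial union of the $K_\alpha$ is in general not a module in $P$. A telling symptom is that your argument nowhere uses the hypothesis that $P$ has no infinite antichain, which is essential to the lemma.

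This is precisely the difficulty the paper's proof is organized around. It shows that if $v\in I^+$ is incomparable to some $u\in I$, then every strict predecessor of $v$ lies in $I$ (otherwise $u<u'$ inside $I$ together with $v'<v$ outside $I$ yields a ${\rm \bf 2}\oplus{\rm \bf 2}$), so such exceptional $v$ are minimal outside $I$ and hence form an antichain; by the no-infinite-antichain hypothesis there are only finitely many of them. Each of them is comparable only to an initial portion of the lexicographical sum, so one can pick a nontrivial \emph{final} interval $\lambda$ of $\xi$ beyond the reach of all of them. Then every vertex outside $M:=\bigcup_{\alpha\in\lambda}K_\alpha$ is below all of $M$ (the earlier $K_\alpha$'s), above all of $M$ (the elements of $I^+$ comparable to all of $I$), or incomparable to all of $M$ (the antichain $A$ of Lemma \ref{lem:is4} and the finitely many exceptional elements of $I^+$), so $M$ is a nontrivial module. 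Your treatment of the vertices in $A$ and in $I\setminus I'$ is fine, but the module has to be taken as a final rather than an initial union, and the finiteness of the exceptional part of $I^+$ is the missing heart of the argument.
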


\begin{proof}
Set
\[
I^+:=\{x\in V\setminus I:\exists y\in I,\ y<x\}
\]
and
\[
A:=V\setminus (I\cup I^+).
\]
By Lemma~\ref{lem:is4}, \(A\) is an antichain and every element of \(A\) is
incomparable with every element of \(I\).

We first prove that only finitely many elements of \(I^+\) are incomparable
with some element of \(I\).

Let \(v\in I^+\), and suppose that \(v\) is incomparable with some
\(u\in I\). Since \(I\) is a nonprincipal ideal, \(I\) has no largest element.
Hence, there exists \(u'\in I\) such that
\(
u<u'.
\)
Then \(u'\) is also incomparable with \(v\). Indeed, if \(u'<v\), then
\(u<v\), contradicting \(u\) is incomparable with \( v\). If \(v<u'\), then \(v\in I\), since
\(I\) is an initial segment and \(u'\in I\), contradicting
\(v\in I^+\subseteq V\setminus I\).

We claim that \(v\) is minimal in \(I^+\). Suppose not. Let
\(
v'\in I^+
\)
with
\(
v'<v.
\)
Since \(v'\notin I\), we have \(v'\nless u\) and \(v'\nless u'\), because
otherwise \(v'\in I\). Also, \(u\nless v'\), since otherwise \(u<v\), and
\(u'\nless v'\), since otherwise \(u'<v\). Therefore \(v'\) is incomparable
with both \(u\) and \(u'\).

Thus, \(
u<u'
\quad\text{and}\quad
v'<v
\)
form a copy of \({\bf 2}\oplus{\bf 2}\), contradicting that \(P\) is an
interval order. Hence, \(v\) is minimal in \(I^+\).

Therefore every element of \(I^+\) which is incomparable with some element of
\(I\) is minimal in \(I^+\). These elements form an antichain. Since \(P\) has
no infinite antichain, there are only finitely many of them. Let
\[
F:=\{v\in I^+: v\text{ is incomparable with some element of }I\}.
\]
Then \(F\) is finite. For each \(v\in F\), the set \(D(v)\cap I\) is a proper initial segment of
\(I\). We now choose a final interval of the index ordinal \(\xi\) which avoids all the sets
\(D(v)\cap I\), for \(v\in F\).

We first record an elementary fact. If \(J\) is an initial segment of \(I\) which is unbounded in
the index order \(\xi\), then \(J=I\).

Indeed, let \(u\in K_\beta\), where \(\beta<\xi\). Since \(J\) is unbounded in
the index order \(\xi\), there exists \(\alpha>\beta\) such that
\(
J\cap K_\alpha\neq\varnothing.
\)
Choose
\(
w\in J\cap K_\alpha.
\)
Since \(\beta<\alpha\), the lexicographical sum gives
\(
u<w.
\)
Because \(J\) is an initial segment of \(I\) and \(w\in J\), it follows that
\(
u\in J.
\)
Since \(\beta<\xi\) and \(u\in K_\beta\) were arbitrary, we conclude that
\(
J=I.
\)

Now fix \(v\in F\). The set
\(
D(v)\cap I
\)
is an initial segment of \(I\). Moreover, it is a proper initial segment. Indeed,
by the definition of \(F\), the element \(v\) is incomparable with at least one
element of \(I\). Hence \(v\) is not above all elements of \(I\), and therefore
\(
D(v)\cap I\neq I.
\)
By the elementary fact proved above, \(D(v)\cap I\) must be bounded in the index
order \(\xi\). Thus, for each \(v\in F\), there exists \(\beta_v<\xi\) such that
\(
D(v)\cap I
\subseteq
\cup_{\alpha<\beta_v}K_\alpha.
\)

Since \(F\) is finite, choose \(\beta<\xi\) such that
\(
\beta_v<\beta
\text{ for every }v\in F.
\)
Since \(\xi\) is a limit ordinal, we may choose \(\gamma\) such that
\(
\beta<\gamma<\xi.
\)
Set
\(
\lambda=[\gamma,\xi)
=
\{\alpha<\xi:\gamma\leq \alpha\}.
\)
Then \(\lambda\) is a nonempty proper final interval of \(\xi\). Moreover,
\(
\left(\cup_{\alpha\in\lambda}K_\alpha\right)\cap (D(v)\cap I)
=
\varnothing
\quad\text{for every }v\in F.
\) Setting
\(
M:=\cup_{\alpha\in\lambda}K_\alpha,
\)
we have
\(
M\cap (D(v)\cap I)=\emptyset \text{ for every }v\in F.
\)
Equivalently, every \(v\in F\) is incomparable with every element of \(M\).
We choose \(\lambda\) so that \(M\) is nontrivial and proper.

We now prove that \(M\) is a module of \(P\). Let \(z\in V\setminus M\).

If \(z\in I\setminus M\), then, since \(M\) is a final interval of the
lexicographical sum \(I\), \( z<m\) for every \(m\in M\).

If \(z\in A\), then \(z\) is incomparable with every element of \(I\), hence
with every element of \(M\).

Finally, suppose \(z\in I^+\). If \(z\in F\), then by the choice of \(M\),
\(z\) is incomparable with every element of \(M\). If \(z\notin F\), then
\(z\) is comparable with every element of \(I\). Since \(z\notin I\) and \(I\)
is an initial segment, \(z\) cannot be below any element of \(I\). Therefore
\(
m<z \text{ for every }m\in M.
\)

Thus, every element outside \(M\) is either below all elements of \(M\), above
all elements of \(M\), or incomparable with all elements of \(M\). Hence, \(M\)
is a nontrivial module of \(P\). This prove that \(P\) is not prime.
\end{proof}

\begin{proposition}\label{prop:notprime}An uncountable scattered unbounded interval order with no infinite antichain is not prime.
  \end{proposition}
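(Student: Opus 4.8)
The plan is to reduce the statement to Theorem \ref{thm:assous-pouzet} by way of the auxiliary lemmas just proved. Let $P=(V,\leq)$ be an uncountable scattered unbounded interval order with no infinite antichain; I want to produce a nontrivial module. The first step is to locate a ``large'' ideal inside $P$. Since $P$ is unbounded, $V$ itself is a proper initial segment of nothing, so instead I pass to $\mydownarrow Max(P)$ versus its complement: either $P=\mydownarrow Max(P)$, or $P\setminus \mydownarrow Max(P)$ is a nonprincipal ideal by Lemma \ref{lem:is2}. I will argue that in the first case, since $P$ is unbounded and has no infinite antichain, $Max(P)$ is a finite antichain, so $P=\mydownarrow Max(P)$ is a finite union of principal ideals $\mydownarrow x$; as $P$ is uncountable one of these, say $\mydownarrow x_0$, is uncountable, and it is an ideal. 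In the second case the nonprincipal ideal $I_0:=P\setminus\mydownarrow Max(P)$ satisfies $V=I_0\cup\mydownarrow Max(P)$ with $Max(P)$ finite, so again $I_0$ is uncountable. Either way I obtain an uncountable ideal $I$ of $P$.

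Next I compute the cofinality of $I$. Since $I$ is an uncountable scattered poset with no infinite antichain, and by Theorem \ref{thm:cofinal-chain} it contains a cofinal chain $L$, that chain is a scattered chain; and $L$ is uncountable (if $L$ were countable, then $I=\mydownarrow L$ would be a countable union of principal ideals $\mydownarrow l$, each of which, being an ideal in an interval order with no infinite antichain, I claim is countable — this needs the inductive structure, but alternatively: a countable cofinal chain forces $cf(I)\leq\omega$ and I can instead pass down). The cleanest route: $cf(I)=\kappa$ for some cardinal $\kappa$; if $\kappa\leq\omega$ then $I$ is the directed union of $\omega$-many principal ideals $\mydownarrow x_n$, so some $\mydownarrow x_n$ is uncountable, and I replace $I$ by that principal ideal — but a principal ideal has a largest element, hence cofinality $1$, contradiction, unless the ideal itself was already handled. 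Let me organize: among all uncountable ideals of $P$ choose one, $I$, whose cofinality $\nu$ is as small as possible. If $\nu$ is finite then $I$ has a largest element $x$, so $I=\mydownarrow x$; but then $\mydownarrow x$ uncountable forces (by Lemma \ref{lem:is2} applied inside, or by directedness) that $\mydownarrow x \setminus \mydownarrow Max(\mydownarrow x\setminus\{x\})$-type analysis produces a smaller uncountable ideal — more directly, $D(x)=\mydownarrow x\setminus\{x\}$ is an initial segment of $P$, and if it is uncountable then it is a directed-or-not initial segment; write $D(x)=J\cup\mydownarrow L$ with $L$ a finite antichain and $J$ a nonprincipal ideal (Lemma \ref{lem:is2}), and since $L$ is finite $J$ is uncountable with $cf(J)<cf(\mydownarrow x)$-wait, $cf(\mydownarrow x)=1$ so this is automatic; anyway $J$ is an uncountable ideal of smaller cofinality unless $cf(J)$ is already infinite. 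Iterating, minimality of $\nu$ forces $\nu$ to be infinite, and if $\nu=\omega$ then $I=\bigcup_n \mydownarrow x_n$ gives an uncountable $\mydownarrow x_n$, contradicting minimality as above. Hence $\nu$ is infinite and uncountable.

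Now comes the main step. The ideal $I$ has infinite uncountable cofinality $\nu$, and by Lemma \ref{lem:ideal-pure} $I$ is pure (here I use that $P$ has no infinite antichain). By Theorem \ref{thm:assous-pouzet}, $I$ is a lexicographical sum $\sum_{\alpha\in K}P_\alpha$ where $K$ is a chain of order type $\nu$; in particular $I$ is a lexicographical sum $\sum_{\alpha<\nu}K_\alpha$ along the infinite ordinal $\nu$ with all $K_\alpha\neq\varnothing$. Finally I invoke Lemma \ref{lem:notprime}: an interval order $P$ with no infinite antichain that has an ideal of this form is not prime. This yields the conclusion.

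The main obstacle I anticipate is the bookkeeping in the second paragraph: establishing that $P$ really does contain an uncountable ideal of infinite cofinality, rather than dissolving under repeated passage to principal subideals into something countable. The key facts that make it work are that $P$ has no infinite antichain (so all the ``antichain parts'' appearing via Lemma \ref{lem:is2} are finite and can be discarded without losing uncountability) and that $P$ is scattered (ensuring cofinal chains are scattered, which is what lets the cofinality analysis terminate). A slicker alternative to the minimal-cofinality argument: show directly that if every ideal of $P$ were countable then $P$ itself would be countable — an uncountable scattered interval order is a well-founded-like increasing union built along $AM_\leq(P)$, which is a scattered chain by Theorem \ref{thm:scattered}, and one can find an ideal with uncountable cofinality by taking a point of uncountable cofinality in that chain. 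I would pick whichever of these is shortest to write cleanly, but the minimality argument is the most self-contained given only the lemmas in the excerpt.
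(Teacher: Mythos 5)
Your endgame coincides with the paper's: once an ideal $I$ of uncountable cofinality is in hand, Lemma \ref{lem:ideal-pure}, Theorem \ref{thm:assous-pouzet} and Lemma \ref{lem:notprime} finish the proof. The gap is entirely in your second paragraph: you never actually establish that $P$ contains an ideal of uncountable cofinality, and this is \emph{not} a consequence of the hypotheses. Your minimal-cofinality argument bottoms out at $\nu=1$: an uncountable \emph{principal} ideal has cofinality $1$, which is already minimal, so no contradiction with minimality is available there; and your attempt to ``descend'' from $\mydownarrow x$ only produces either another uncountable principal ideal $\mydownarrow y\subsetneq\mydownarrow x$ (cofinality $1$ again, no progress) or a nonprincipal ideal whose cofinality may be $\omega$, which sends you back to the principal case. (Also, $\mydownarrow L$ for $L$ a finite antichain need not be countable, so ``$J$ is uncountable'' does not follow.) This loop can genuinely fail to terminate: take a linear sum of finite antichains indexed by $\omega_1^*$ (with an $\omega$-tail glued on top if you want it unbounded). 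This is an uncountable scattered interval order with no infinite antichain in which every ideal is principal or of countable cofinality, because every nonempty updirected subset is capped by a single component $K_{\alpha_0}$ with $\alpha_0$ least in a final segment of $\omega_1$. So the statement ``every such $P$ has an ideal of uncountable cofinality,'' which your argument silently relies on, is false, and your ``slicker alternative'' via a point of uncountable cofinality in $AM_\leq(P)$ fails for the same reason.

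The paper's fix is a dualization step you are missing. By the Dushnik--Miller theorem an uncountable poset with no infinite antichain contains an uncountable chain; since $P$ is scattered that chain contains a copy of $\omega_1$ or of $\omega_1^*$. In the first case $I:=\mydownarrow A$ with $A\cong\omega_1$ is an ideal of cofinality $\omega_1$, and your third paragraph goes through verbatim. In the second case one runs the whole argument on the dual of $P$, which is again an interval order with no infinite antichain, primeness being self-dual. With Dushnik--Miller and the dual case added, your proof becomes essentially the paper's; without them it does not close.
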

  \begin{proof}
Let \(P=(V,\leq)\) be an uncountable scattered unbounded interval order with no
infinite antichain. It follows from Erd\H{o}s, Dushnik and Miller Theorem \cite{dushnik-miller} that $P$ contains either the first uncountable ordinal $\omega_1$, or its dual ${\omega_1}^*$.

We first assume that \(P\) contains a chain
\(
A=\{a_\alpha:\alpha<\omega_1\}
\) with
\(
a_\alpha<a_\beta \text{ whenever } \alpha<\beta.
\)
and let
\(
I:=\mydownarrow A
\). Then \(I\) is an ideal of \(P\). We next prove that \(I\) is nonprincipal. Suppose, to the contrary, that
\(
I=\mydownarrow p
\)
for some \(p\in P\). Since \(p\in I\), there exists \(\alpha<\omega_1\) such that
\(
p\leq a_\alpha.
\)
But \(a_{\alpha+1}\in I=\mydownarrow p\), so
\(
a_{\alpha+1}\leq p\leq a_\alpha,
\)
contradicting
\(
a_\alpha<a_{\alpha+1}.
\)
Thus, \(I\) is nonprincipal.

We now claim that \(I\) has cofinality \(\omega_1\). The chain \(A\) is cofinal
in \(I\), so
\(
\operatorname{cf}(I)\leq \omega_1.
\)
On the other hand, \(I\) cannot have a countable cofinal subset. Indeed, suppose
\(
C=\{c_n:n\in\mathbb N\}
\)
were cofinal in \(I\). For each \(n\), choose \(\alpha_n<\omega_1\) such that
\(
c_n\leq a_{\alpha_n}.
\)
Let
\(
\beta>\sup_{n\in\mathbb N}\alpha_n
\)
with \(\beta<\omega_1\). Since \(C\) is cofinal in \(I\), there exists \(n\) such
that
\(
a_\beta\leq c_n.
\)
Therefore
\(
a_\beta\leq c_n\leq a_{\alpha_n},
\)
contradicting \(\alpha_n<\beta\). Hence, \(I\) has no countable cofinal subset, and so
\(
\operatorname{cf}(I)=\omega_1.
\)

By Lemma~\ref{lem:ideal-pure}, every ideal of an interval order with no infinite
antichain is pure. Hence, \(I\) is a pure nonprincipal ideal of cofinality \(\omega_1\).

By Theorem \ref{thm:assous-pouzet}, \(I\) can be written as a lexicographical sum
\(
I=\sum_{\alpha<\omega_1}K_\alpha,
\)
where each \(K_\alpha\) is nonempty. Since \(\omega_1\) is a limit ordinal, we may
apply Lemma~\ref{lem:notprime}. Therefore \(P\) is not prime.

If $P$ contains ${\omega_1}^*$ apply the previous argument to the dual of $P$.
\end{proof}

\subsection{Prime interval graphs with no an infinite clique are at most countable}

A \textit{graph} is a pair $G:= (V, E)$, where $E$ is a subset of $[V]^2$, the set of $2$-element subsets of $V$. Elements of $V$ are the \textit{vertices} of $G$ and elements of $ E$ its \textit{edges}. The \textit{complement} of $G$ is the graph $G^c$ whose vertex set is $V$ and edge set ${\overline { E}}:= [V]^2\setminus  E$. The \emph{comparability graph}, respectively the \emph{incomparability graph},
of an ordered set $P= (V,\leq)$ is the (undirected) graph, denoted by $\comp(P)$, respectively $\incg(P)$, with vertex set $V$ and edges the pairs
$\{u,v\}$ of comparable distinct vertices (that is, either $u< v$ or $v<u$) respectively incomparable vertices. A graph $G=  (V, E)$ is a
\emph{comparability graph} if its edge set is the set of comparabilities of some order on $V$.  A graph is called an \emph{interval graph} if each of its vertices can be associated with a nonempty interval on a chain in such a way that two vertices are adjacent in the graph if and only if the associated intervals have a nonempty intersection. An interval graph is the incomparability graph of an interval order (see \cite{gilmore}).

The following is due to Gallai \cite{gallai} in the finite case and to Kelly \cite{kelly85} in the infinite case.

\begin{theorem}\label{thm:kelly}
An ordered set is prime if and only if its comparability graph is prime.
\end{theorem}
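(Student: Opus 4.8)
I would prove the two implications separately, via contrapositives, comparing the modules of $P=(V,\leq)$ with those of its comparability graph $G:=Comp(P)$.

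The direction ``$G$ prime $\Rightarrow$ $P$ prime'' is immediate: if $M$ is a nontrivial module of $P$, then for all $a,a'\in M$ and $x\notin M$ we have $x<a\Leftrightarrow x<a'$ and $a<x\Leftrightarrow a'<x$, hence $x$ is comparable to $a$ (i.e.\ $x<a$ or $a<x$) if and only if it is comparable to $a'$; so $M$ is a nontrivial module of $G$. Thus $P$ not prime implies $G$ not prime.

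For ``$P$ prime $\Rightarrow$ $G$ prime'', suppose $G$ has a nontrivial module $M$; I want to extract a nontrivial module of $P$. The module property of $M$ in $G$ splits $V\setminus M$ into the set $N$ of vertices joined in $G$ to all of $M$ and the set $\bar N$ of vertices joined to none: every $x\in\bar N$ is incomparable in $P$ to every element of $M$, and every $x\in N$ is comparable in $P$ to every element of $M$. If each $x\in N$ lies (in $\leq$) entirely above $M$ or entirely below $M$, then $M$ is already a module of $P$ and we are done. Otherwise there is $x\in N$ and $a,a'\in M$ with $x<a$ and $a'<x$ (or the dual), so $a'<a$ and $M$ contains a comparable pair. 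In this ``mixed'' case the desired module of $P$ will in general be a \emph{different} set, and producing it is the substantive part. Here I would use the basic fact that in a transitive order a vertex comparable to both ends of an incomparable pair cannot lie strictly between them; consequently the way an outside neighbour in $N$ ``splits'' $M$ is constrained, and one can decompose $G{\restriction}_M$ into connected components — each a module of $G$ contained in $M$ — and dually decompose its complement into co-components, iterating. Either one reaches inside $M$ a clique or an independent set (which in $P$ is a chain, respectively an antichain, and hence, since $|M|\geq 2$, yields a nontrivial module of $P$), or one reaches a submodule on which $G$ is both connected and co-connected, and one argues that this submodule, with the induced order, is a module of $P$.

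\textbf{Main obstacle.} The last case — a submodule of $G{\restriction}_M$ that is both connected and co-connected — is where the argument has real content: one must show it is a module of the order $\leq$ and not merely of $G$, which forces an essential, global use of transitivity. This is exactly Gallai's theorem that the modular decompositions of $P$ and of $Comp(P)$ coincide (a lexicographic sum of $P$ over a chain corresponding to a ``series'' node of $G$, a disjoint sum to a ``parallel'' node, and the prime quotients being shared), extended to infinite posets by Kelly using that the modular decomposition exists for arbitrary graphs. Since the statement is classical, I would either carry out this component/co-component analysis in detail or simply invoke \cite{gallai} and \cite{kelly85}.
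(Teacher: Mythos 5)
The paper does not actually prove this statement: it is quoted with the attribution to Gallai (finite case) and Kelly (infinite case), so your fallback of invoking \cite{gallai} and \cite{kelly85} matches what the paper itself does. Your first direction is correct and complete: a nontrivial module of $P$ is a nontrivial module of $Comp(P)$, so primeness of the graph forces primeness of the order.

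Your sketch of the converse, however, would fail at the terminal ``clique'' case if carried out as written. If $M$ is a module of $G:=Comp(P)$ on which $G$ is complete, then $M$ is a chain of $P$, but it need not be a module of $P$, and neither is a $2$-element subset of it: in the three-element chain $a<y<b$ the set $\{a,b\}$ is a nontrivial module of $Comp(P)=K_3$ and a chain of $P$, yet not a module of $P$. So ``a clique inside $M$ \dots yields a nontrivial module of $P$'' is false as stated. (The independent-set case is fine, and your transitivity observation actually gives more than you use: for a module $M$ of $G$, any outside vertex comparable to all of $M$ relates the same way to the two ends of every incomparable pair in $M$, hence is constant on each connected component of $Inc(P){\restriction}_M$; so co-connectedness of $G{\restriction}_M$ alone already makes $M$ a module of $P$, and any component of $Inc(P){\restriction}_M$ with at least two elements is a nontrivial module of $P$ --- no iteration, and hence no termination issue in the infinite case, is needed.) To repair the clique case: any $2$-subset $\{a,b\}$, $a<b$, of a complete module of $G$ is again a module of $G$, so $\inc_P(a)=\inc_P(b)$ and every vertex outside $\{a,b\}$ comparable to them is below both, above both, or strictly between them; if the open interval $(a,b):=\{x: a<x<b\}$ is empty then $\{a,b\}$ is a module of $P$, and otherwise $\{a\}\cup(a,b)$ is a nontrivial module of $P$, since transitivity forces every element of $\inc_P(a)$ to be incomparable to all of $(a,b)$. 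With these two cases your component analysis closes the hard direction elementarily, without appealing to the full modular decomposition machinery.
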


Since a graph is prime if and only if its complement is prime, it follows that:

\begin{corollary}Every prime interval graph with no infinite clique is at most countable.
\end{corollary}

\section{Modular decomposition of graphs with no infinite independent sets}\label{section:tree}

Let $R=(V,\rho)$ be a binary relation. A module $A$ is \emph{strong} if it is nonempty and every other module is disjoint from $A$ or is comparable to $A$ with respect to set inclusion. The singletons in $V$ and the set $V$ are (trivial) strong modules. The modules of a chain are its intervals; in particular the strong modules of a chain are trivial. It may happen that a binary relation has no maximal proper strong modules (see Example 4.3 from \cite{courcelle-delhomme}). The notion of strong module was introduced by Gallai \cite{gallai} who showed that \emph{a finite ordered set and its comparability graph have the same strong modules}, from which it follows that \emph{an ordered set is prime if and only if its comparability graph is prime}. These results were extended to infinite comparability graphs by Kelly \cite{kelly85}.

Following Courcelle and Delhomm\'e \cite{courcelle-delhomme}, for any nonempty $A \subseteq V$, we let $S(A)$ denote the intersection of all strong modules including $A$; thus $S(A)$ is the smallest strong module including $A$. For possibly equal vertices $x$ and $y$ of a binary relation $R$, let $S(x, y)$ denote the least strong module $S(\{x,y\})$ containing $x$ and $y$. We call \emph{robust} any such strong module. Notice that a module is robust if and only if it is of the form $S(F)$ for some nonempty finite set $F$ of vertices (see Lemma 3.1(4) of \cite{courcelle-delhomme}). For an example, if $A$ is a module and $R_{\restriction A}$ is prime, then $A$ is robust. For finite binary relations, the notions of a strong module and of a robust module coincide.

\begin{df}
Let $R=(V,\rho)$ be a binary relation. Let $M$ be a strong module in $R$. Let $x,y \in M$. We set $x \equiv_M y$ if either $x=y$ or there is a strong module containing $x$ and $y$ and properly contained in $M$.
\end{df}

The following is Lemma 6.5 from \cite{hahn-pouzet-woodrow}.

\begin{lemma}
Let $R=(V,\rho)$ be a binary relation. Let $M$ be a strong module in $R$. The relation $\equiv_M$ is an equivalence relation on $M$ whose equivalence classes are strong modules. If $M$ has more than one element, then there are at least two equivalence classes if and only if $M$ is robust. Furthermore, these classes are the maximal strong modules properly included in $M$.
\end{lemma}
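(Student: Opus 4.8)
The plan is to verify the four assertions in order: (1) $\equiv_A$ is an equivalence relation, (2) its classes are strong modules, (3) assuming $|A|>1$, there are $\geq 2$ classes iff $A$ is robust, (4) the classes are exactly the maximal strong modules properly included in $A$. For reflexivity and symmetry of $\equiv_A$ there is nothing to do. For transitivity, if $x\equiv_A y$ via a strong module $M\subsetneq A$ and $y\equiv_A z$ via a strong module $N\subsetneq A$, then $M\cap N\ni y$ is nonempty, so by Lemma \ref{lem:strong-module}(i) the set $M\cap N$ is a strong module; but a strong module is comparable to or disjoint from every module, so $M$ and $N$ are comparable, whence $M\cup N$ is one of $M,N$, a strong module properly contained in $A$ and containing $x$ and $z$. (If one prefers to avoid the union, note $M\cup N$ is itself a module by Lemma \ref{lem:1}(3) since $M\cap N\neq\varnothing$, and being a union of two strong modules one of which contains it, it is strong.) Hence $x\equiv_A z$.

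For (2), fix $x\in A$ and let $[x]$ be its class. I would show $[x]=\bigcup\{M : M \text{ strong}, x\in M, M\subsetneq A\}$ when $[x]\neq\{x\}$, and $[x]=\{x\}$ otherwise. The displayed union, call it $M_x$, is a directed family: any two strong modules containing $x$ are comparable (intersection is a nonempty strong module, hence comparable to each), so by Lemma \ref{lem:strong-module}(ii) the union $M_x$ is a strong module. It is properly contained in $A$ provided each member is; and $M_x\subseteq[x]$ is clear, while $[x]\subseteq M_x$ holds since any witness for $y\equiv_A x$ is such an $M$. If instead $[x]=\{x\}$, the singleton $\{x\}$ is a strong module. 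So every class is a strong module.

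For (3): if $A$ is not robust, then by the characterization recalled before Lemma \ref{lem:strong-well-founded} (a strong module is robust iff it equals $S(F)$ for some finite $F$, equivalently $S(x,y)=A$ for no pair $x,y\in A$ when... ) — more directly, $A$ robust means $A=S(x,y)$ for some $x,y\in A$; if $A$ is \emph{not} robust then for every pair $x,y\in A$ the least strong module $S(x,y)$ containing them is properly contained in $A$, i.e. $x\equiv_A y$, so there is exactly one class. Conversely, if $A$ is robust, $A=S(x_0,y_0)$ for some $x_0,y_0$; if there were only one class, then $x_0\equiv_A y_0$ would give a strong module $M$ with $x_0,y_0\in M\subsetneq A$, contradicting minimality of $S(x_0,y_0)=A$. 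Hence $\geq 2$ classes iff $A$ robust. (When $|A|=1$ there is trivially one class; the hypothesis $|A|>1$ is used exactly to make "one class" meaningful as the negation of "$\geq 2$ classes".)

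For (4): each class $M_x$ is, by construction, a strong module properly contained in $A$ (using that there are at least two classes, hence $A$ is robust and each class is proper). It is maximal among such: if $N$ is a strong module with $M_x\subseteq N\subsetneq A$, then $N$ is one of the modules in the union defining $M_x$, so $N\subseteq M_x$, whence $N=M_x$. Conversely any maximal strong module $N\subsetneq A$ contains some $x$, hence $N\subseteq M_x\subsetneq A$, and maximality forces $N=M_x$. This identifies the classes with the maximal strong modules properly included in $A$.

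The main obstacle I anticipate is purely bookkeeping: making sure at each step that the relevant family of strong modules is directed (so that Lemma \ref{lem:strong-module}(ii) applies) and that "properly contained in $A$" is preserved under the unions taken — the directedness always comes from the fact that two strong modules containing a common point are comparable, which is the one structural input used repeatedly. No deep idea is needed beyond Lemma \ref{lem:strong-module} and the defining property of strong modules.
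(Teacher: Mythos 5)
The paper states this lemma without proof (it is imported from the Courcelle--Delhomm\'e framework), so there is no in-paper argument to compare against; judged on its own, your proof is essentially correct and uses exactly the right structural inputs: the defining property of strong modules (comparable to any module they meet) for transitivity, Lemma \ref{lem:strong-module}(ii) for the classes, and the definition of robustness for the dichotomy. Two small points to tidy up. First, the parenthetical in step (2), ``it is properly contained in $A$ provided each member is,'' is false in general: a directed union of proper strong modules can equal $A$ (this is exactly what happens when $A$ is not robust, and is the point of Example 4.3 of Courcelle--Delhomm\'e). This does not damage your argument, because in step (2) you only need the class to be a strong module (and when $A$ is not robust the unique class is $A$ itself, which is fine), and in step (4) you correctly derive properness from the existence of a second, disjoint class rather than from the union; just delete or correct that sentence. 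Second, in step (3) the witnessing pair for robustness may a priori satisfy $x_0=y_0$ (robust means $A=S(x,y)$ for \emph{possibly equal} $x,y$), in which case ``one class'' yields no proper strong module containing $x_0$ and $y_0$; this is repaired in one line by noting that if $|A|>1$ and $A=S(x_0)$ then for any $y\neq x_0$ one has $S(x_0)\subseteq S(x_0,y)\subseteq A$, so $A=S(x_0,y)$ with $x_0\neq y$, and your contradiction goes through.
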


We call \emph{components} of a robust module $M$ the equivalence classes of the relation $\equiv_M$. The components need not be robust. If $M$ is a robust module with at least two elements of a binary relation $R$, then for two distinct components $I$, $J$ in $M$, $(x,y)\in \rho$ for $x\in I$ and $y \in J$ depends only upon $I$ and $J$. Hence, the binary relation  on $M$ induces a binary relation $R/\equiv_M$ on the set $M/\equiv_M$ of components of $M$, called the \emph{Gallai quotient} of $M$, and $R_{\restriction M}$ is the lexicographical sum of the components indexed by $R_{\restriction M}/\equiv_M$. This quotient $R_{\restriction M}/\equiv_M$ has a special structure: its strong modules are trivial.
The following theorem is the central result of the decomposition theory of binary relation. It describes the structure of the Gallai quotient. It is due to Gallai [14] for finite graphs, to Ehrenfeucht and Rozenberg \cite{EhrenfeuchtRozenbergII} for finite binary relations and to Harju and Rozenberg \cite{HarjuRozenberg} for infinite binary relations (see also \cite{courcelle-delhomme} and \cite{boussairi}).

\begin{theorem}\label{thm:no-strong}A binary relation $R$ with at least two vertices has no nontrivial strong modules if and only if $R$ is prime, or is edge-free, or is complete or is a linear order.
\end{theorem}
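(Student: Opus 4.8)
The plan is to prove the two implications separately, treating the converse (that each of the four listed types has only trivial strong modules) as an elementary verification and the forward implication as the classical structure theorem for the Gallai quotient already quoted before the statement.

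For the converse I would argue by exhibiting overlapping modules. The unifying observation is that in each non-prime type every candidate set is a module: if $R$ is edge-free then $\rho$ is the diagonal, and if $R$ is complete then $\rho=V\times V$, so in either case \emph{every} subset of $V$ is a module; if $R$ is a linear order its modules are exactly the intervals of the underlying chain. Fix $A$ with $2\le |A|< |V|$ and suppose $A$ is strong. In the edge-free and complete cases, choose distinct $a,b\in A$ and $c\in V\setminus A$; then $\{a,c\}$ is a module meeting $A$ in $a$, not contained in $A$ since $c\notin A$, and not containing $A$ since $b\in A\setminus\{a,c\}$, so it overlaps $A$, contradicting strongness. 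In the linear-order case, fix distinct $w_1<w_2$ in $A$ and $c\in V\setminus A$; as $A$ is an interval, $c$ lies above or below all of $A$, and the interval $[w_2,c]$ (resp.\ $[c,w_1]$) is a module meeting $A$, containing $c\notin A$, and omitting $w_1$ (resp.\ $w_2$), hence overlapping $A$, again a contradiction. Thus no such $A$ is strong, and when $R$ is prime it has no nontrivial module at all; in every case the strong modules are trivial. (For $|V|=2$ there are no nontrivial subsets, and every two-vertex structure is edge-free, complete, or a linear order, so the statement holds vacuously.)

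For the forward implication I assume $R$ has at least two vertices and only trivial strong modules. Under this hypothesis the relation $\equiv_V$ is equality: the only strong modules properly inside $V$ are singletons, which cannot contain two distinct points, so the components of $V$ are exactly the singletons and the Gallai quotient $R/\equiv_V$ equals $R$. Hence $R$ \emph{is} a structure whose strong modules are all trivial, and the classification of such structures is the central decomposition theorem cited above, due to Gallai \cite{gallai} for finite graphs, to Ehrenfeucht and Rozenberg \cite{EhrenfeuchtRozenbergII} for finite binary relational structures, and to Harju and Rozenberg \cite{HarjuRozenberg} in the infinite case (equivalently Corollary 4.4 of \cite{courcelle-delhomme}); it asserts precisely that such an $R$ is prime, edge-free, complete, or a linear order. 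I would therefore invoke this result directly. For a self-contained route one runs the classical overlap analysis: if $R$ is not prime, fix a nontrivial module $M$; since $M$ is not strong there is a module $N$ overlapping it, and by Lemma \ref{lem:1} the sets $M\cup N$, $M\cap N$, $M\setminus N$ and $N\setminus M$ are modules; closing under overlaps yields a partition of $V$ into modules whose quotient carries no proper nontrivial module, and reading off the relation between blocks (no arc, symmetric arc, oriented arc) identifies the quotient as edge-free, complete, or a linear order.

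The main obstacle lies entirely in the forward direction and is the content of the cited theorems rather than of the easy verification: controlling the overlap closure and the resulting maximal modular partition in the \emph{infinite} setting, where finite induction must be replaced by a Zorn-type argument together with the facts that a directed union of modules is a module and, for strong modules, Lemma \ref{lem:strong-module}. Since this is exactly the difficulty already resolved by Harju–Rozenberg and by Courcelle–Delhommé, my proposal is to quote their theorem for the forward implication and to supply only the short overlapping-module arguments above for the converse.
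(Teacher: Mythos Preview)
The paper does not prove this theorem at all: it is stated as a cited result (attributed to Gallai, Ehrenfeucht--Rozenberg, Harju--Rozenberg, and Courcelle--Delhomm\'e) with no accompanying argument. Your proposal agrees with this treatment on the forward implication---you ultimately just invoke the same external references---and correctly supplements it with the short overlapping-module verification of the converse; so your approach is consistent with, and slightly more explicit than, the paper's own handling of the statement.
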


Specializing to the case of posets we have,

\begin{corollary}\label{cor:no-strong}A poset $P$ with at least two vertices has no nontrivial strong modules if and only if $P$ is prime, an antichain or is a linear order.
\end{corollary}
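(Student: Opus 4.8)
The plan is to deduce this directly from Theorem \ref{thm:no-strong} by checking that, when the binary relational structure $R$ happens to be a poset $P = (V,\leq)$, the four alternatives in that theorem collapse to exactly three. So I would start by applying Theorem \ref{thm:no-strong}: a poset $P$ with at least two vertices has no nontrivial strong modules if and only if $P$, viewed as a binary relational structure, is prime, or edge-free, or complete, or a linear order. Conversely, each of "prime", "antichain", "linear order" is a poset with no nontrivial strong modules (primeness gives no nontrivial modules at all, hence no nontrivial strong ones; in an antichain or a linear order every module is trivial or, in the chain case, an interval, and the strong modules of a chain are trivial as recalled in the excerpt). So the only real content is translating the middle two cases.

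The key step is the following dictionary between the relational vocabulary and the order vocabulary. The relation $\rho$ attached to the poset $P$ is $\rho = \{(x,y) : x \leq y\}$, which by reflexivity always contains the diagonal. Then $R$ is \emph{edge-free} means $\rho = \varnothing$ — but since $\rho$ must contain every pair $(x,x)$, strictly this is impossible once $V \neq \varnothing$; under the convention in force for binary relational structures of posets (where the diagonal is present by fiat and "edge-free" refers to the off-diagonal part), $R$ edge-free means exactly that no two distinct elements are comparable, i.e. $P$ is an antichain. Next, $R$ is \emph{complete} means it is total and undirected; for the relation of a poset, undirectedness forces $x \leq y \Leftrightarrow y \leq x$ for all $x,y$, which by antisymmetry forces $x = y$ whenever $x \leq y$, so again $P$ would be an antichain of size $\leq 1$, contradicting $|V| \geq 2$ unless the "complete" case is simply vacuous here. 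Thus the "complete" alternative of Theorem \ref{thm:no-strong} does not arise for a genuine partial order on at least two vertices. Finally, $R$ is a \emph{linear order} means precisely that $P$ is a chain. Assembling these: the alternatives "prime or edge-free or complete or linear order" reduce, for a poset on at least two vertices, to "prime or antichain or linear order", which is the claim.

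The only subtlety — and the place where I would be most careful — is the treatment of the "edge-free" and "complete" cases, because it hinges on the convention (stated at the top of Section \ref{section:tree}: "for all $x \notin V$, $(x,x) \in \rho$", evidently meant as "for all $x \in V$") that the reflexive pairs are always present and are not counted as edges. I would spell out explicitly that, with this convention, "edge-free" for the relational encoding of a poset means "antichain", and that "complete" cannot occur for a partial order with two or more elements, so that alternative is simply dropped. Once that bookkeeping is pinned down, the corollary is immediate from Theorem \ref{thm:no-strong} together with Corollary \ref{cor:no-strong} being symmetric in "if and only if" — the forward direction from the theorem, the backward direction from the elementary observations above that chains, antichains and prime posets each have only trivial strong modules.
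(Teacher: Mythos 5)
Your proposal is correct and follows the same route as the paper, which states the corollary as an immediate specialization of Theorem \ref{thm:no-strong} without further argument; you simply make the translation explicit (edge-free $=$ antichain under the reflexive-diagonal convention, the ``complete'' alternative vacuous for a partial order on at least two vertices, linear order $=$ chain). The only nitpick is your aside that in an antichain ``every module is trivial'' --- in an antichain every subset is a module; what is true, and what you actually need, is that every \emph{strong} module is trivial, which in any case already follows from the backward direction of Theorem \ref{thm:no-strong}.
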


\begin{theorem}\label{thm:robust-lexico}
If $R$ is a binary relation, then every robust module with at least two elements is the lexicographic sum of its components and the quotient is either prime, or is edge-free, or is complete or is a linear order.
\end{theorem}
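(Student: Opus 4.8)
The plan is to assemble the statement from the pieces already developed: the equivalence relation $\equiv_A$ on a robust module $A$, the description of its classes as the maximal strong modules properly contained in $A$, and Theorem \ref{thm:no-strong} on binary relational structures with only trivial strong modules. First I would fix a robust module $A$ with $|A|\geq 2$. By the lemma preceding the definition of components, $A$ robust with at least two elements guarantees that $\equiv_A$ has at least two classes, and these classes are exactly the maximal strong modules properly included in $A$. Each class $I$ is a strong module of $R$, hence a module of $R_{\restriction A}$; I would check directly from Definition \ref{def:module} that for distinct classes $I,J$ and any $x,x'\in I$, $y,y'\in J$, the relation between $x$ and $y$ agrees with that between $x'$ and $y'$ (using that $I$ and $J$ are modules and are disjoint), so the quotient relation on $A/\equiv_A$ is well defined and $R_{\restriction A}$ is literally the lexicographical sum $\sum_{C\in A/\equiv_A} R_{\restriction C}$ indexed by the Gallai quotient $R_{\restriction A}/\equiv_A$. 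This is the paragraph of the excerpt just before the statement, so most of it may simply be cited.

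The second step is to identify the quotient structure. I would show that $R_{\restriction A}/\equiv_A$ has no nontrivial strong modules: a strong module of the quotient would pull back, via the union of the corresponding classes, to a strong module of $R_{\restriction A}$ strictly between a class and $A$, contradicting maximality of the classes (one uses here that a union of a set of $\equiv_A$-classes that forms a module of the quotient is a module of $R_{\restriction A}$, and that strongness is inherited because the classes are strong in $R$ and $A$ is strong). Having established that the quotient has at least two vertices and only trivial strong modules, Theorem \ref{thm:no-strong} applies and gives that the quotient is prime, edge-free, complete, or a linear order. Combining with the first step yields the theorem.

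The main obstacle I anticipate is the bookkeeping around \emph{strongness} rather than mere modularity: I must be careful that (a) each $\equiv_A$-class is genuinely a strong module of the ambient $R$ (not just of $R_{\restriction A}$), which the preceding lemma provides, and (b) the correspondence between strong modules of the Gallai quotient $R_{\restriction A}/\equiv_A$ and strong modules of $R_{\restriction A}$ lying between a single class and $A$ is a bijection preserving the strongness property — this requires checking that an arbitrary module of $R_{\restriction A}$ containing a full class and contained in $A$ is necessarily a union of classes, which follows since $\equiv_A$-classes are the minimal-containing strong modules and any module meeting a strong module either contains it or is contained in it or is disjoint from it. Once these two points are nailed down, the rest is a direct appeal to Theorem \ref{thm:no-strong}, and no genuinely new argument is needed; the theorem is essentially a restatement of Corollary 4.4 of \cite{courcelle-delhomme} in the language set up in this section.
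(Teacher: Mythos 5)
Your proposal is correct and follows exactly the route the paper intends: the paper gives no separate proof of Theorem \ref{thm:robust-lexico}, presenting it as the assembly of the paragraph defining the Gallai quotient (which asserts the lexicographic-sum decomposition and that the quotient has only trivial strong modules) with Theorem \ref{thm:no-strong}, citing Corollary 4.4 of Courcelle--Delhomm\'e. Your two flagged verification points (strongness of the classes in the ambient $R$, and the correspondence between strong modules of the quotient and strong modules of $R_{\restriction A}$ above a class) are exactly the details the paper leaves implicit, so if anything your write-up is more careful than the source.
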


Specializing to the case of posets we have,

\begin{corollary}\label{cor:robust-lexico}
If $P$ is a poset, then every robust module with at least two elements is the lexicographic sum of its components and the quotient is either prime, or is an antichain, or is a chain.
\end{corollary}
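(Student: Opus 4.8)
The plan is to derive this directly from Theorem \ref{thm:robust-lexico} by specializing the classification of the Gallai quotient to the case where the ambient binary relational structure is a poset. First I would recall that, in the sense of the earlier definitions, a poset $P=(V,\leq)$ is a directed binary relational structure (reflexivity being imposed by convention, and antisymmetry ruling out the simultaneous presence of $(x,y)$ and $(y,x)$ for $x\neq y$). In particular, $P$ is never complete in the sense of Section \ref{section:tree}, since completeness requires the relation to be undirected, whereas in a poset with at least two comparable vertices the comparability is strictly directed. Likewise, an "edge-free" or "independent" binary relational structure, when it happens to be (the relation of) a poset, is precisely an antichain, and a binary relational structure that is a linear order is precisely a chain.

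Next I would apply Theorem \ref{thm:robust-lexico} to $R:=P$: every robust module $A$ with at least two elements is the lexicographic sum of its components, indexed by the Gallai quotient $P_{\restriction A}/\!\equiv_A$, and that quotient is either prime, or edge-free, or complete, or a linear order. It remains only to observe that the quotient $P_{\restriction A}/\!\equiv_A$ is again (the relation of) a poset: the lexicographic sum decomposition shows that the induced relation on the set of components is a partial order, being the partial order induced on the quotient $V/\!\equiv_A$ by $\leq$ in the usual way (components are convex by Lemma \ref{lem:1}(1), so no "mixed" comparabilities arise, and antisymmetry and transitivity descend to the quotient). Hence the quotient, being a poset, cannot be complete; applying the observations of the first paragraph, the only surviving options are: prime, an antichain, or a chain. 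This is exactly the assertion of the corollary.

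The only point requiring a little care — the "main obstacle," such as it is — is verifying that the Gallai quotient of a poset is again a poset rather than some more general directed relational structure, i.e. that the induced relation on components is genuinely antisymmetric and transitive. This follows because, by the discussion preceding Theorem \ref{thm:robust-lexico}, for distinct components $I,J$ of $A$ the relation between $x\in I$ and $y\in J$ depends only on $I$ and $J$; combined with the convexity of modules (Lemma \ref{lem:1}(1)) this forces the induced relation on $A/\!\equiv_A$ to be exactly the quotient order, which inherits transitivity and antisymmetry from $\leq$. Everything else is a direct transcription of Theorem \ref{thm:robust-lexico} together with the trivial identifications (edge-free $=$ antichain, linear order $=$ chain, complete impossible) noted above.
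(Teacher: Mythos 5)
Your proposal is correct and matches the paper's (implicit) argument: the corollary is stated as an immediate specialization of Theorem \ref{thm:robust-lexico} to posets, with no written proof, precisely because the Gallai quotient of a poset is again a poset, so the ``complete'' alternative is impossible and ``edge-free'' and ``linear order'' become ``antichain'' and ``chain.'' Your extra verification that antisymmetry and transitivity descend to the quotient is sound and fills in the only detail the paper leaves unstated.
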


\begin{lemma}\label{lem:maximal-independent}Let $G=(V,E)$ be a graph. Let $M$ be a module in $G$. If $M$ contains a maximal (with respect to set inclusion) independent set in $G$, then $V\setminus M$ is a module in $G$.%$\overline{R}$ is not connected.
\end{lemma}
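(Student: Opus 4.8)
The plan is to prove that if $M$ is a module in $G$ containing a maximal independent set $S$, then every vertex outside $M$ is adjacent to every vertex of $M$; once this ``module looks like a clique joined to everything outside'' fact is established, the complement property $V\setminus M$ being a module is essentially automatic.

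First I would fix a maximal independent set $S$ with $S\subseteq M$ and take an arbitrary vertex $x\in V\setminus M$. Since $M$ is a module and $x\notin M$, the adjacency of $x$ to the elements of $M$ is uniform: either $x$ is adjacent to every element of $M$, or $x$ is adjacent to no element of $M$. I claim the second case is impossible. Indeed, if $x$ were adjacent to no element of $M$, then in particular $x$ is adjacent to no element of $S$; but then $S\cup\{x\}$ would be an independent set properly containing $S$, contradicting maximality of $S$. (Here I use that $x\notin M\supseteq S$, so $x\notin S$ and the containment is proper.) Hence every $x\in V\setminus M$ is adjacent to every element of $M$.

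Next I would verify that $V\setminus M$ is a module, i.e. that for all $y,y'\in V\setminus M$ and all $z\notin V\setminus M$ (that is, $z\in M$), we have $\{z,y\}\in E \Leftrightarrow \{z,y'\}\in E$. But by the previous paragraph both $\{z,y\}$ and $\{z,y'\}$ are edges (since $z\in M$ and $y,y'\in V\setminus M$), so the biconditional holds trivially. The adjacency condition ``within'' $V\setminus M$ relative to outside vertices is thus vacuously satisfied, and since $G$ is an undirected graph there is no separate ``incoming/outgoing'' condition to check. Therefore $V\setminus M$ satisfies Definition \ref{def:module} and is a module. (If one prefers, this is also immediate from Lemma \ref{lem:1}(4): since $S\subseteq M$ and, say, any $x\in V\setminus M$ gives $M\setminus (V\setminus M)=M\neq\varnothing$... but the direct argument above is cleaner, so I would use that.)

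I do not expect a genuine obstacle here: the only point requiring care is the edge case where $V\setminus M=\varnothing$ or $V\setminus M$ is a singleton, in which case the conclusion holds trivially since the whole set and singletons are modules; and the observation that $S$ being contained in $M$ forces $x\notin S$ for every $x\notin M$, which is what converts maximality of $S$ into the uniform-adjacency statement. The mild subtlety worth flagging explicitly is that we need $S$ to be a \emph{maximal} independent set of the whole graph $G$, not merely maximal inside $M$ — that is exactly what rules out the non-edge case for outside vertices.
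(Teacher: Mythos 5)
Your proof is correct and follows essentially the same route as the paper: both arguments use the maximality of the independent set $S\subseteq M$ together with the module property of $M$ to show that every vertex outside $M$ is adjacent to every vertex of $M$, from which the module property of $V\setminus M$ is immediate. The only cosmetic difference is that the paper first produces a neighbor $i\in S$ of each outside vertex and then invokes the module property, whereas you first invoke the uniform-adjacency dichotomy and then rule out the all-non-adjacent case; these are the same argument.
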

\begin{proof}Let $M$ be a module in $G$. Let $I$ be a maximal independent set in $G$ and suppose $I\subseteq M$. Let $v\not \in M$. Then $v\not \in I$. From the maximality of $I$ we infer that there exists $i\in I$ such that $\{v,i\}\in E$. Since $I\subset M$ and $M$ is a module we infer that for all $m\in M$, $\{v,m\}\in E$. Hence, every element not in $M$ is adjacent to all elements in $M$. This proves that  $V\setminus M$ is a module in  $G$.
\end{proof}

\begin{lemma}\label{lem:strong-robust}Let $G=(V,E)$ be a graph with no infinite independent set or no infinite clique. Then every strong module is robust.
\end{lemma}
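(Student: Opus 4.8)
The plan is to prove the formally equivalent statement $(i)$ of Lemma \ref{lem:strong-well-founded}: the collection of robust modules of $G$, ordered by reverse inclusion, is well-founded; equivalently, there is no strictly increasing $\omega$-chain $A_0\subsetneq A_1\subsetneq\cdots$ of robust modules. I would argue by contradiction: assuming such a chain exists, I will produce either an infinite independent set (contradicting the hypothesis on $G$) or a module that properly overlaps a strong module (impossible).

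The starting observation is that for each $n$, since $A_n$ is a module and $A_{n+1}\setminus A_n\ne\varnothing$, every vertex $v\in A_{n+1}\setminus A_n$ is adjacent to all vertices of $A_n$ or to none of them. Set $S:=\{n\in\NN:\text{some } v\in A_{n+1}\setminus A_n \text{ is adjacent to no vertex of } A_n\}$. If $S$ is infinite, then choosing, for each $n\in S$, a vertex $v_n\in A_{n+1}\setminus A_n$ nonadjacent to every vertex of $A_n$, the set $\{v_n:n\in S\}$ is infinite (for $m<n$ in $S$ we have $v_m\in A_{m+1}\subseteq A_n$, hence $v_m\ne v_n$) and independent (for $m<n$ in $S$, $v_n$ is nonadjacent to $v_m\in A_n$), contradicting that $G$ has no infinite independent set.

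Hence $S$ is finite; let $N$ exceed every element of $S$. Then for every $n\ge N$, \emph{every} vertex of $A_{n+1}\setminus A_n$ is adjacent to every vertex of $A_n$. Using this for $n=N$ and $n=N+1$, together with the fact that $A_{N+2}$ is a module, I would show that $W:=A_{N+2}\setminus A_N$ is a module of $G$: given $w,w'\in W$ and $x\notin W$, if $x\in A_N$ then each of $w,w'$ lies in $A_{N+1}\setminus A_N$ or in $A_{N+2}\setminus A_{N+1}$ and in either case is adjacent to $x$ (in the first case by the property for $n=N$, in the second by the property for $n=N+1$, since $A_N\subseteq A_{N+1}$), so $x\sim w\Leftrightarrow x\sim w'$; while if $x\notin A_N$ then $x\notin A_{N+2}=A_N\cup W$, and as $A_{N+2}$ is a module and $w,w'\in A_{N+2}$ we again get $x\sim w\Leftrightarrow x\sim w'$. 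Finally $W$ properly overlaps $A_{N+1}$: indeed $A_{N+1}\cap W=A_{N+1}\setminus A_N\ne\varnothing$, $W\setminus A_{N+1}=A_{N+2}\setminus A_{N+1}\ne\varnothing$, and $A_{N+1}\setminus W=A_N\ne\varnothing$. This contradicts the fact that $A_{N+1}$, being robust hence strong, must be disjoint from or $\subseteq$-comparable with every module. So no such chain exists, and by Lemma \ref{lem:strong-well-founded} every strong module of $G$ is robust.

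The main obstacle is the verification that $W=A_{N+2}\setminus A_N$ is a module; this is exactly where the dichotomy ``adjacent to all of $A_n$ or to none'' (from $A_n$ being a module) is combined with the finiteness of $S$ (which encodes the absence of infinite independent sets). Everything else is bookkeeping with the chain inclusions, using only that robust modules are nonempty and strong. One could bypass Lemma \ref{lem:strong-well-founded} and run the same computation with a strictly increasing chain of robust modules of arbitrary regular length $\kappa$ exhausting a hypothetical non-robust strong module, using that $\kappa$ is a limit ordinal so that $A_N,A_{N+1},A_{N+2}$ still exist, but invoking Lemma \ref{lem:strong-well-founded} keeps the argument in the $\omega$ case and is cleaner.
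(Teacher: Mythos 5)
Your proof is correct, but it takes a genuinely different route from the paper's. The paper argues directly on a given strong module $M$: it takes a maximal independent set $I$ of $G_{\restriction M}$ (finite by hypothesis), observes that $S(I)$ is a robust module contained in $M$, applies Lemma \ref{lem:maximal-independent} to $G_{\restriction M}$ to conclude that $M\setminus S(I)$ is a module, and then shows $M=S(x,y)$ for $x\in S(I)$ and $y\in M\setminus S(I)$ (the case $M=S(I)$ being immediate), thereby exhibiting an explicit finite set $F$ with $S(F)=M$. You instead establish clause $(i)$ of Lemma \ref{lem:strong-well-founded} --- well-foundedness of the robust modules under reverse inclusion --- via a dichotomy on the consecutive differences $A_{n+1}\setminus A_n$ of a hypothetical strictly increasing chain: either infinitely many of them contain a vertex with no neighbour in the preceding module, which produces an infinite independent set, or eventually every new vertex is complete to everything before it, in which case $W=A_{N+2}\setminus A_N$ is a module properly overlapping the strong module $A_{N+1}$. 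Both halves check out: the verification that $W$ is a module and that $W$ overlaps $A_{N+1}$ uses only that the $A_n$ are nonempty strong modules and the eventual completeness, and the appeal to the implication $(i)\Rightarrow(ii)$ of Lemma \ref{lem:strong-well-founded} is not circular, since that lemma is proved in the paper without reference to the present statement. A side benefit of your route is that it yields Corollary \ref{cor:strong-well-founded} directly, whereas the paper derives that corollary by composing the present lemma with the implication $(ii)\Rightarrow(i)$; what the paper's argument buys in exchange is the explicit witness $S(x,y)=M$ and the avoidance of any chain or choice argument.
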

\begin{proof}Since a graph and its complement have the same set of strong modules, we may assume without loss of generality that $G$ has no infinite independent set. Let $M$ be a strong module in $G$. Let $I$ be a maximal independent set in $M$. Since $G$ has no infinite independent set, $I$ must be finite. Then $S(I)$ is a robust module included in $M$. If $S(I)=M$, then we are done. Next we suppose that $S(I)\neq M$. It follows from Lemma \ref{lem:maximal-independent} applied to $G_{\restriction M}$ that $M\setminus S(I)$ is a nonempty module in $G_{\restriction M}$. Let $x\in S(I)$ and $y\in M\setminus S(I)$ and consider $S(x,y)$. This is a strong module included in $M$  that intersects the two modules $S(I)$ and $M\setminus S(I)$. Since $y\not \in S(I)$ and $x\not \in M\setminus S(I)$ we infer that $M\subseteq S(x,y)$. Hence, $M= S(x,y)$. This proves that $M$ is robust as required.
\end{proof}

The following is now a consequence of Lemma \ref{lem:strong-robust} and Theorem \ref{thm:robust-lexico}.

\begin{corollary}If $G=(V,E)$ is a  graph with no infinite independent set or no infinite clique, then $G$ has a Gallai decomposition.
\end{corollary}

\subsection{Gallai decomposition of interval orders with no infinite antichains}\label{subsection:interval-gallai}

Let $P=(V,\leq)$ be an ordered set. We recall that $\sing(P)$ is the set of singular vertices of $P$, that is the set of  vertices that belong to a unique maximal antichain in $P$.

\begin{lemma}\label{lem:singular}Let $P=(V,\leq)$ be an interval order distinct from an antichain. Then \break $(\sing(P),\leq)$ is a linear sum of antichains. If $P$ is prime, then $(\sing(P),\leq)$ is a chain.
\end{lemma}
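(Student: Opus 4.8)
The plan is to analyze the structure of $\sing(P)$ using the chain $AM(P)_\leq$ of maximal antichains, which is linearly ordered since $P$ is an interval order (Lemma~\ref{lem:intervalorder-prop}). Recall that $x$ is singular precisely when $x$ lies in a unique maximal antichain; denote this antichain by $A_x$. First I would show that $(\sing(P),\leq)$ is a linear sum of antichains, which by a standard characterization is equivalent to showing that $\sing(P)$ contains no copy of $\mathbf{2}\oplus\mathbf{1}$ — that is, there is no triple $x<y$, $z$ with $z$ incomparable to both $x$ and $y$, all three in $\sing(P)$. Suppose such a triple existed. Since $z\sim x$ and $z\sim y$, the element $z$ would lie in a maximal antichain with $x$ and in one with $y$; but $x<y$ forces these to be distinct maximal antichains, so $z$ lies in at least two maximal antichains, contradicting $z\in\sing(P)$. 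Hence $\sing(P)$ has no $\mathbf{2}\oplus\mathbf{1}$, so its incomparability graph is a disjoint union of cliques, i.e. $(\sing(P),\leq)$ is a linear sum of antichains.

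Next, for the prime case, I would rule out the possibility that one of these antichain-summands has more than one element. Suppose $x,y\in\sing(P)$ are distinct and incomparable. Then by the argument above they lie in the same maximal antichain, and since each is singular, $A_x = A_y$ is the unique maximal antichain through either of them. The plan is to argue that $\{x,y\}$ is then a nontrivial module (in fact a doubly critical pair in the sense defined before Lemma~\ref{lem:standar}), contradicting primeness. Concretely: take any $t\notin\{x,y\}$. If $t\sim x$, then $t$ lies in some maximal antichain $A$ containing $x$; but $A_x$ is the only such antichain, so $A = A_x\ni y$, whence $t\sim y$. Symmetrically $t\sim y$ implies $t\sim x$. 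Since $P$ is a poset (so comparabilities are determined by $<$, and $z<x \Leftrightarrow z<y$, $x<z\Leftrightarrow y<z$ follow once we know $t$ relates to $x$ iff it relates to $y$ — here one uses that $x,y$ are incomparable and in the same antichain to pin down the direction), $\{x,y\}$ is a module; it is nontrivial, contradicting that $P$ is prime. Therefore every antichain-summand of $\sing(P)$ is a singleton, i.e. $(\sing(P),\leq)$ is a chain.

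The main obstacle I anticipate is the direction-of-comparability bookkeeping in the module argument: knowing that every outside vertex $t$ is comparable to $x$ iff comparable to $y$ is not by itself enough for a module in an order — one must also check that $t<x \Leftrightarrow t<y$. Here the key leverage is that $x,y$ are incomparable and both in $A_x=A_y$: if $t<x$ but $t>y$ (or incomparable to $y$ in a way inconsistent with $A_x$), one derives that $x$ and $y$ sit in different maximal antichains or produces a forbidden $\mathbf{2}\oplus\mathbf{2}$, contradicting singularity or Lemma~\ref{lem:intervalorder-prop}. Making this dichotomy fully rigorous — ideally by invoking Lemma~\ref{lem:intervalorder-prop}(iv)/(v) on the sets $U(x),U(y)$ and $D(x),D(y)$, which must coincide since $x\sim y$ forces $U(x)\cap U(y)\neq\varnothing\neq D(x)\cap D(y)$ in the relevant cases, hence $U(x)=U(y)$ and $D(x)=D(y)$ by linearity of the inclusion chains — is where the care is needed, but it is routine once set up this way.
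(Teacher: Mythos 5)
Your proposal is correct and follows essentially the same route as the paper: incomparable singular vertices must share their unique maximal antichain (yielding the linear sum of antichains), and any two incomparable singular vertices form a nontrivial module, which is impossible when $P$ is prime. The direction-of-comparability issue you flag is settled by transitivity alone (if $t<x$ and $y<t$ then $y<x$, contradicting $x\sim y$), so the $U/D$-chain machinery in your last paragraph is unnecessary.
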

\begin{proof}
Since a singular vertex belongs to a unique maximal antichain, two distinct
singular vertices are either contained in the same maximal antichain or are
comparable. Thus, the incomparability classes of \(\sing(P)\) are antichains,
and these classes are linearly ordered. Hence, \((\sing(P),\leq)\) is a linear
sum of antichains.

Now suppose that \(u,v\in\sing(P)\) are distinct and incomparable. Let \(A\)
be the unique maximal antichain containing \(u\) and \(v\). We prove that
\(\{u,v\}\) is a module.

Let \(x\notin\{u,v\}\). If \(x\in A\), then \(x\) is incomparable with both
\(u\) and \(v\). Suppose now that \(x\notin A\). Since \(u\) and \(v\) are
singular and \(A\) is their unique maximal antichain, \(x\) must be comparable
with both \(u\) and \(v\). Moreover, \(x\) cannot be below one of \(u,v\) and
above the other, because \(u\) is incomparable with \(v\). Therefore either \(x\) is below \(u\) and \(v\) or \(x\) is above \(u\) and \(v\). Hence, \(\{u,v\}\) is a nontrivial module of \(P\).

If \(P\) is prime, no such pair \(u,v\) can exist. Therefore every two singular vertices are comparable, and so \((\sing(P),\leq)\) is a chain.
\end{proof}

\begin{lemma}\label{lem:lexico-sum}Let $(P_i)_{i\in Q}$ be a family of ordered sets indexed by an ordered set $Q$. Then the lexicographical sum $ \sum_{i\in Q} P_{i}$ is an interval order if and only if $Q$ and all the $P_i$'s are interval orders and for all $i\not \in \sing(Q)$, $P_i$ is an antichain  and, for every maximal antichain $A\subseteq \sing(Q)$, there is at most one $a\in A$ such that $P_a$ is not an antichain.
\end{lemma}

\begin{proof} Suppose first that \(P:=\sum_{i\in Q}P_i\) is an interval order. Then each \(P_i\) is an induced
subposet of \(P\), hence each \(P_i\) is an interval order. Also, choosing one
element \(x_i\in P_i\) for each \(i\in Q\), the induced subposet on
\(\{x_i:i\in Q\}\) is isomorphic to \(Q\). Hence, \(Q\) is an interval order.

Let \(i\notin\sing(Q)\). If \(P_i\) were not an antichain, choose
\(a<b\) in \(P_i\). Since \(i\notin\sing(Q)\), the set of elements of \(Q\)
incomparable with \(i\) is not an antichain. Hence, there exist
\(j<k\) in \(Q\), both incomparable with \(i\). Choose
\(c\in P_j\) and \(d\in P_k\). Then, in \(P\),
\(
a<b,\quad c<d,
\)
and all cross pairs are incomparable. Thus, \(P\) contains a \(2\oplus2\),
contradicting that \(P\) is an interval order. Therefore \(P_i\) is an
antichain.

Now let \(A\) be a maximal antichain of \(\sing(Q)\). If two distinct
\(i,j\in A\) had \(P_i\) and \(P_j\) both non-antichains, choose
\(
a_i<b_i\quad\text{in }P_i,
\quad
a_j<b_j\quad\text{in }P_j.
\)
Since \(i\) is incomparable with  \(j\), these two chains form a \(2\oplus2\) in \(P\), again a
contradiction. Hence, at most one \(a\in A\) has \(P_a\) non-antichain.

Conversely, assume that \(Q\) and all \(P_i\)'s are interval orders, that
\(P_i\) is an antichain whenever \(i\notin\sing(Q)\), and that every maximal
antichain of \(\sing(Q)\) contains at most one index \(a\) with \(P_a\) not an
antichain.

Suppose, toward a contradiction, that \(P\) contains an induced
\(2\oplus2\), say
\(
x_1<x_2,\quad y_1<y_2,
\)
with all cross pairs incomparable. Let \(i\) be the index of the component
containing \(x_1,x_2\), if the first chain lies inside one component; otherwise
let \(i_1<i_2\) be the indices of \(x_1,x_2\). Define similarly for
\(y_1,y_2\).

If both chains use two different components, then their indices give a
\(2\oplus2\) in \(Q\), contradicting that \(Q\) is an interval order.

If one chain lies inside a component, say \(x_1,x_2\in P_i\), and the other chain uses two different components \(P_j,P_k\) with \(j<k\),
then \(i\) is incomparable with both \(j\) and \(k\). Since \(j<k\), the set of
elements incomparable with \(i\) is not an antichain. Thus, \(i\notin\sing(Q)\). But \(P_i\) contains the chain \(x_1<x_2\), so \(P_i\) is not an antichain,
contradicting the hypothesis.

Therefore both chains must lie inside components: \(x_1,x_2\in P_i\) and \(y_1,y_2\in P_j\). Since the two chains are mutually incomparable in \(P\), we have \(i\) is incomparable with \(j\) in \(Q\). Also \(P_i\) and \(P_j\) are not antichains, so by hypothesis \(i,j\in\sing(Q)\).
By Lemma~\ref{lem:singular}, \((\sing(Q),\leq)\) is a linear sum of antichains.
Hence, two incomparable singular elements belong to the same maximal antichain
of \(\sing(Q)\). This maximal antichain contains both \(i\) and \(j\), and both
\(P_i\) and \(P_j\) are non-antichains, contradicting the hypothesis.

Thus, \(P\) contains no induced \(2\oplus2\). Hence, \(P\) is an interval order.
\end{proof}

We are now ready to proceed with the proof of Theorem \ref{thm:main-lex}. Let $P$ be an interval order. If $P$ has no nontrivial strong modules, then it follows from Corollary \ref{cor:no-strong} that $P$ is either prime, an antichain or is a linear order. Now suppose $P$ has a nontrivial strong module. Suppose further that $P$ has no infinite antichains. It follows from Corollary \ref{cor:robust-lexico} that $P$ is a lexicographical sum of its maximal strong modules over a prime interval, an antichain or  a chain. The conclusion follows from Lemma \ref{lem:lexico-sum}.

\section{Examples of well-quasi-ordered prime interval orders whose chain of maximal antichains has arbitrary countable Hausdorff rank}\label{section:example}

A characterization of scattered linear orders was obtained by Hausdorff
\cite{hausdorff} in 1908 and rediscovered by Erd\H{o}s and Hajnal
\cite{eh63} in 1962.

\begin{theorem}[Hausdorff's classification theorem]
\label{thm:hausdorff-scattered}
Let \(\mathfrak{S}\) be the smallest class of chains satisfying the following
conditions:
\begin{itemize}
  \item The one-element chain belongs to \(\mathfrak{S}\).
  \item If \(C,C'\in \mathfrak{S}\), then \(C\oplus C'\in \mathfrak{S}\).
  \item If \(\kappa\) is a regular cardinal and
  \(\{C_\gamma : \gamma<\kappa\}\subseteq \mathfrak{S}\), then both
  \[
  \oplus_{\gamma<\kappa}C_\gamma
  \qquad\text{and}\qquad
  \oplus_{\gamma<\kappa^*}C_\gamma
  \]
  belong to \(\mathfrak{S}\), where \(\kappa^*\) denotes the dual of
  \(\kappa\).
\end{itemize}
Then \(\mathfrak{S}\) is precisely the class of scattered chains.
\end{theorem}

The following theorem expresses the decomposition of scattered chains into sums
of chains of strictly smaller Hausdorff rank (see \cite{fraissetr},
Section~6.2.6, p.~171).

\begin{theorem}\label{thm:hausdorff-rank-scattered}
Every scattered chain is either:
\begin{enumerate}[$(1)$]
\item a finite linear sum of chains of strictly smaller rank;
\item a linear sum of chains of strictly smaller rank indexed by a regular
cardinal;
\item a linear sum of chains of strictly smaller rank indexed by the dual of a
regular cardinal.
\end{enumerate}
\end{theorem}

For countable scattered chains, Theorem
\ref{thm:hausdorff-rank-scattered} takes the following form.

\begin{theorem}\label{thm:countable-hausdorff-rank-scattered}
Every countable scattered chain is either a finite linear sum of chains of
strictly smaller rank, or a linear sum of chains of strictly smaller rank
indexed by \(\omega\) or by \(\omega^*\).
\end{theorem}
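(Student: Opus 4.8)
The plan is to read this off directly from Theorem \ref{thm:hausdorff-rank-scattered}, which has already been recorded in the excerpt. Let $C$ be a countable scattered chain. Theorem \ref{thm:hausdorff-rank-scattered} gives two alternatives: either $C$ is a \emph{finite} linear sum of chains of strictly smaller Hausdorff rank, or $C = \bigoplus_{\gamma < \kappa} C_\gamma$, respectively $C = \bigoplus_{\gamma < \kappa^*} C_\gamma$, for some regular cardinal $\kappa$ and chains $C_\gamma$ each of Hausdorff rank strictly smaller than that of $C$. In the first alternative we are already done, so the whole content of the argument lies in the analysis of the second one.

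In the second alternative I would argue as follows. A regular cardinal is infinite, so $\kappa \geq \aleph_0$. Moreover each $C_\gamma$ occurring in the Hausdorff decomposition is nonempty (with the convention, as in \cite{fraissetr}, under which finite chains already have rank $0$, a piece of the decomposition of strictly smaller rank is in particular a chain, hence a nonempty summand). Hence $|C| = \sum_{\gamma < \kappa} |C_\gamma| \geq |\kappa| = \kappa$. Since $C$ is countable this forces $\kappa \leq \aleph_0$, and therefore $\kappa = \aleph_0 = \omega$. The same count applies verbatim to the variant indexed by $\kappa^*$. Thus in the second alternative $C$ is the linear sum of chains of strictly smaller rank along $\omega$ or along $\omega^*$, which is precisely the second case of the statement.

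There is essentially no obstacle here beyond bookkeeping: the only point requiring a touch of care is to make sure that the summands produced by Theorem \ref{thm:hausdorff-rank-scattered} are genuinely nonempty, since it is the inequality $|C| \geq \kappa$ that eliminates every regular cardinal above $\omega$. Once this is pinned down the proof reduces to a one-line cardinality count, and no new idea about scattered chains or Hausdorff rank is needed.
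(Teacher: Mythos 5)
Your proposal is correct and is exactly the argument the paper intends: the paper states Theorem \ref{thm:countable-hausdorff-rank-scattered} as an immediate specialization of Theorem \ref{thm:hausdorff-rank-scattered} without writing out the cardinality count, and your observation that nonempty summands force the regular cardinal to be $\omega$ when the chain is countable is the whole (implicit) content. No gap.
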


We will use Theorem~\ref{thm:countable-hausdorff-rank-scattered}, together
with induction on the Hausdorff rank, to construct examples of
well-quasi-ordered prime interval orders whose chains of maximal antichains
have arbitrary countable Hausdorff rank.

\begin{figure}[ht]
\begin{center}
\leavevmode \epsfxsize=4.5in \epsfbox{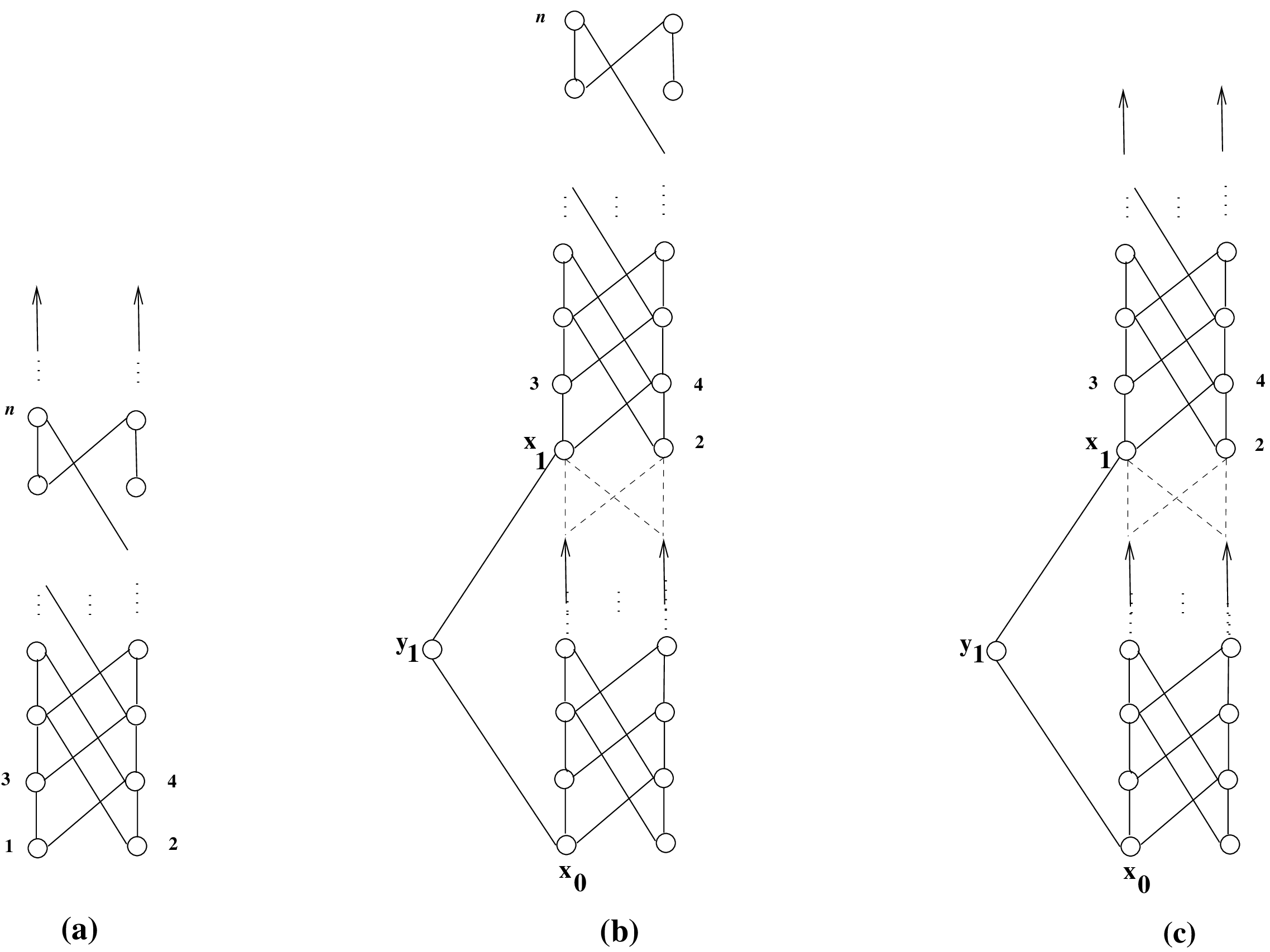}
\end{center}
\caption{The dashed lines in (b) and (c) indicate linear sum.\\
(a) A well-quasi-ordered prime interval order whose chain of maximal antichains has order type $\omega$. The incomparability graph is an infinite one-way path. (b) A well-quasi-ordered prime interval order whose chain of maximal antichains has order type $\omega+n-1$, for $n\in \NN$. (c) A well-quasi-ordered prime interval order whose chain of maximal antichains has order type $\omega\oplus \omega$.}
\label{examples}
\end{figure}

A \emph{path} is a graph \(P\) for which there exists a one-to-one map
\[
f:V(P)\rightarrow I,
\]
where \(I\) is an interval of the chain \(\ZZ\), such that
\[
\{u,v\}\in E(P)
\quad\Longleftrightarrow\quad
|f(u)-f(v)|=1
\]
for all \(u,v\in V(P)\).

If \(I=\{1,\dots,n\}\), then the path is denoted by \(P_n\). Its length is
\(n-1\). Thus, \(P_2\) consists of a single edge, while \(P_1\) consists of a
single vertex. We denote by \(P_{\NN}\) the one-way infinite path obtained by
taking \(I=\NN\).

For every integer \(n\geq 4\), the path \(P_n\) is prime and its complement
\(\overline{P_n}\) is a prime comparability graph. Consequently,
\(\overline{P_n}\) has exactly two transitive orientations. These two
orientations are isomorphic; we denote either of them by \(I_n\). The ordered
set \(I_n\) is a prime interval order whose chain of maximal antichains has
cardinality \(n-1\) and Hausdorff rank \(0\).

Similarly, the one-way infinite path \(P_{\NN}\) is prime and its complement
\(\overline{P_{\NN}}\) is a prime comparability graph. Hence, \(\overline{P_{\NN}}\) has exactly two transitive orientations. One of these
orientations, denoted by \(I_{\NN}\), is a prime well-quasi-ordered interval
order whose chain of maximal antichains has order type \(\omega\) (see Figure
\ref{examples}(a)) and Hausdorff rank \(0\).

The following basic construction will allow us to obtain well-quasi-ordered prime interval orders of arbitrary countable Hausdorff rank.

Let \(\kappa\geq 2\) be either a finite integer or \(\kappa=\omega\). Let
\(
(P_i=(V_i,\leq_i))_{i<\kappa}
\)
be a sequence of prime well-quasi-ordered interval orders such that
\(
|V_i|\geq 4\) for every \(i<\kappa\).

For each \(i<\kappa\), choose
\(
x_i\in \min(P_i)
\)
such that \(U(x_i)\) is maximal, with respect to set inclusion, among the sets
\(
\{U(t): t\in \min(P_i)\}.
\)

For each \(1\leq i<\kappa\), let \(y_i\) be a new element not belonging to
\(
\cup_{i<\kappa}V_i.
\)
Set
\[
V:=(\cup_{i<\kappa}V_i)\cup\{y_i:1\leq i<\kappa\}.
\]

We now define an ordered set \(Q=(V,\leq_Q)\). First, the restriction of
\(\leq_Q\) to \(\cup_{i<\kappa}V_i\) is the linear sum
\(
\sum_{i<\kappa}P_i.
\)
Next, for every \(1\leq i<\kappa\), we impose
\(
x_{i-1}<_{Q} y_{i}<_{Q} x_i.
\)
The order relation \(\leq_Q\) is then obtained by taking the transitive
closure.

Observe that
\(
y_1<_{Q}y_2<_{Q}y_3<_{Q}\cdots,
\)
so the set \(\{y_i:1 \leq i<\kappa\}\) forms a well-founded chain in \(Q\).

See Figure~\ref{construction} for a depiction of the ordered set \(Q\). The
ordered set shown in Figure~\ref{examples}(c) is obtained from this
construction in the case \(\kappa=2\) and \(P_i=I_{\NN}\) for all
\(i<2\).

\begin{figure}[ht]
\begin{center}
\leavevmode \epsfxsize=1.7in \epsfbox{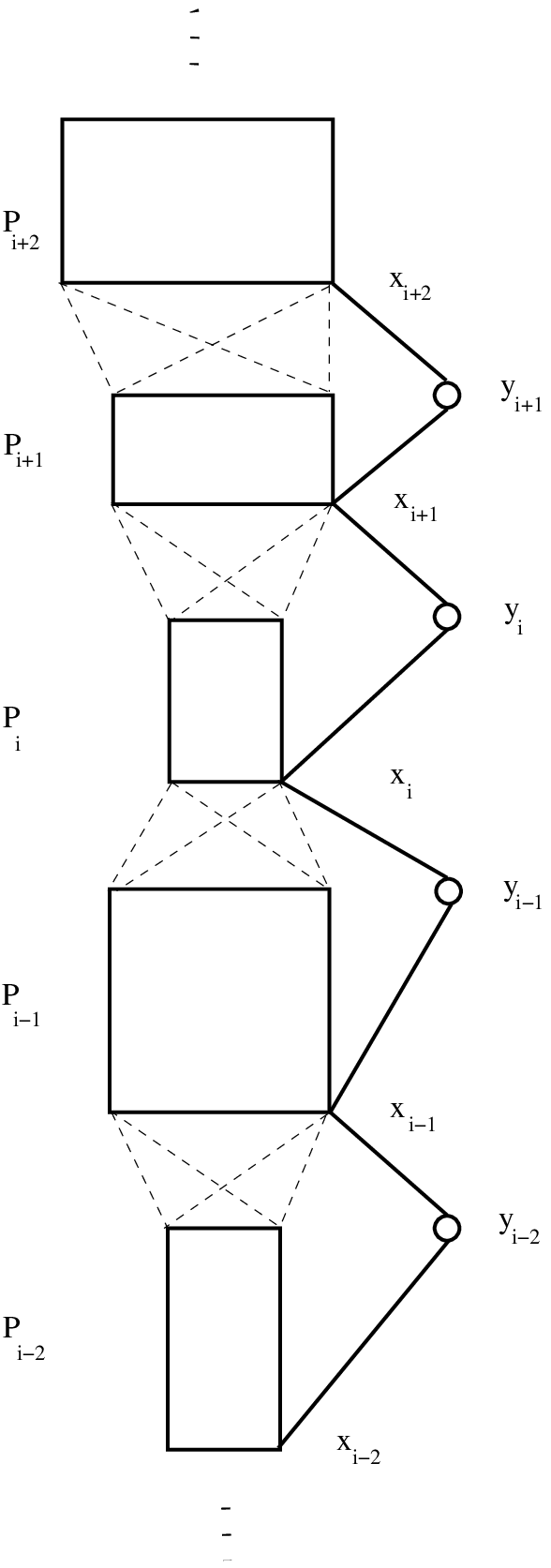}
\end{center}
\caption{The ordered set $Q$. The $P_j$'s are well-quasi-ordered prime interval orders. The dashed lines indicate linear sum.} \label{construction}
\end{figure}

\begin{lemma}\label{lem:basi-construction}
The ordered set \(Q\) is a well-quasi-ordered prime interval order.
Furthermore, \(AM(Q)_\leq\) is isomorphic to \(\sum_{i<\kappa} AM(P_i)_\leq\).
\end{lemma}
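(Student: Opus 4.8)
The strategy splits into three parts: (1) $Q$ is an interval order; (2) $Q$ is prime; (3) $Q$ is well-quasi-ordered; together with the identification of $AM(Q)_\leq$.

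\emph{Step 1 ($Q$ is an interval order).} I would use the characterization $(i)\Leftrightarrow(ii)$ of Lemma~\ref{lem:intervalorder-prop}: it suffices to show $Q$ does not embed ${\rm \bf 2}\oplus{\rm \bf 2}$. The underlying linear sum $\sum_{i<\kappa}P_i$ is an interval order (each $P_i$ is, and a linear sum of interval orders is an interval order, since linear sums cannot create a ${\rm \bf 2}\oplus{\rm \bf 2}$: any two incomparable elements lie in the same summand). Adding the $y_i$'s: each $y_i$ sits strictly between $x_{i-1}$ and $x_i$ in the transitive closure, so $y_i$ is comparable to everything in $V_0\cup\cdots\cup V_{i-1}$ (below it: at least everything $\le_Q x_{i-1}$, and by Lemma~\ref{lem:minimal-element} applied to $P_{i-1}$, the choice of $x_{i-1}$ forces $U_{P_{i-1}}(x_{i-1})=V_{i-1}\setminus Min(P_{i-1})$ — I should record precisely which elements of $V_{i-1}$ and $V_i$ end up comparable to $y_{i-1}$) and comparable to everything in $V_{i+1}\cup V_{i+2}\cup\cdots$; the only elements incomparable to $y_i$ form a subset of $Min(P_i)\cup(\text{something in }V_{i-1})$, and the $y_j$'s are pairwise comparable. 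A short case analysis then shows no ${\rm \bf 2}\oplus{\rm \bf 2}$ can use a $y_i$, since an element incomparable to $y_i$ is confined to one or two consecutive summands and any element below/above it is comparable to $y_i$.

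\emph{Step 2 ($Q$ is prime).} This is the main obstacle and the place where Lemma~\ref{lem:module-linearsum} does the heavy lifting. Suppose $M$ is a nontrivial module of $Q$. I would first argue that $M$ cannot contain a $y_i$ together with anything else: since $y_i$ is comparable to all of $\cup_{j\ne i,i-1}V_j$ and to all $y_j$, while being incomparable to part of $V_i$ (and part of $V_{i-1}$), a module containing $y_i$ and a second element would be forced, by convexity (Lemma~\ref{lem:1}(1)) and the module condition tested against outside elements, to swallow ever more of $Q$ — in fact one shows it must contain all of $\cup_j V_j$ and all $y_j$, i.e. $M=Q$. So any nontrivial $M$ is contained in $\cup_{j<\kappa}V_j$. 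Restricting attention to $R:=Q_{\restriction \cup_j V_j}$, note $M$ is a module of $R$ as well (elements of $R$ outside $M$ witness the module condition, and the $y_i$'s are comparable to enough of each $V_j$ that they impose no new constraint — this needs checking but follows from the comparability facts in Step~1). But $R=\sum_{i<\kappa}P_i$ is a linear sum of prime posets each of size $\ge 4$, so by Lemma~\ref{lem:module-linearsum}, $M=\cup_{j\in J}V_j$ for a nontrivial interval $J$ of $[0,\kappa)$. Finally I derive a contradiction by testing this $M$ against a $y_i$: if $J$ is bounded above in $\kappa$, pick $i$ with $i-1\in J$, $i\notin J$ (or $i\in J$, $i-1\notin J$), and check $y_{i-1}$ distinguishes two elements of $M$ (being comparable to some of $V_{i-1}$ and incomparable to the minimal elements of $P_{i-1}$, or similarly for $V_i$); if $J$ is an initial or final segment, the boundary $y$ still separates. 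Hence no nontrivial module exists.

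\emph{Step 3 (well-quasi-order and $AM(Q)_\leq$).} Since $Q$ has no infinite descending chain: the $y_i$'s form a well-ordered chain $\{y_0,y_1,\ldots\}$ of type $\le\omega$, each $P_i$ is well-founded, and the summands are stacked in order type $\kappa$ (an integer or $\omega$), so any descending sequence is eventually trapped in finitely many summands, each well-founded. No infinite antichain: an antichain must lie within a single summand $V_i$ (plus possibly one $y_i$, which has finite incomparability set), and each $P_i$ is wqo; so $Q$ is wqo. For the maximal antichains: every maximal antichain of $Q$ lies inside a single $V_i$ and is a maximal antichain of $P_i$ — because any element of $V_j$ with $j<i$ is below, any element of $V_j$ with $j>i$ is above, and the $y_k$'s: $y_k$ with $k\le i$ lies below or is absorbed, $y_k$ with $k>i$ lies above; the only subtlety is the finitely many $y$'s near $V_i$, and one checks they are comparable to every maximal antichain of $P_i$ (using again the maximality of $x_i$ via Lemma~\ref{lem:minimal-element}: the unique maximal antichain of $P_i$ through $x_i$ is $Min(P_i)$, and $y_{i-1}$ is below all of $Min(P_i)$ while $y_i$ is above all of it). Conversely every maximal antichain of each $P_i$ stays maximal in $Q$. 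Ordering these antichains by (\ref{eqordreinter}) and observing that antichains in $V_i$ precede those in $V_j$ when $i<j$ gives $AM(Q)_\leq=\sum_{i<\kappa}AM(P_i)_\leq$, completing the proof.
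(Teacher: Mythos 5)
Your Steps 1 and 2 follow essentially the same route as the paper: interval-order-ness via the incomparability pattern of the $y_i$'s, and primality by cutting a putative nontrivial module $M$ down to a module of the linear sum $\sum_{i<\kappa}P_i$, invoking Lemma~\ref{lem:module-linearsum} to get a union of consecutive blocks, and then testing against the $y$'s at the boundary. Both steps are sketchy (the assertion that a module containing some $y_i$ and a second element must swallow all of $Q$ needs the cascading argument written out, and your opening claim that $y_i$ is comparable to everything in $V_0\cup\dots\cup V_{i-1}$ contradicts your own later, correct, observation that $y_i$ is incomparable to $V_{i-1}\setminus\{x_{i-1}\}$), but the plan is the paper's plan and can be completed.

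The genuine gap is in Step 3, in the identification of $AM(Q)_\leq$. It is false that every maximal antichain of $Q$ lies inside a single $V_i$ and false that every maximal antichain of $P_i$ stays maximal in $Q$. In your indexing ($x_{i-1}<y_i<x_i$), the element $y_{i+1}$ satisfies $x_i<y_{i+1}$ and is incomparable to \emph{every} element of $V_i\setminus\{x_i\}$ (within $V_i$, only $x_i$ is comparable to $y_{i+1}$). Hence a maximal antichain $B$ of $P_i$ with $x_i\notin B$ is contained in $V_i\setminus\{x_i\}$ and can be enlarged by $y_{i+1}$, so it is \emph{not} maximal in $Q$; conversely, a maximal antichain of $Q$ meeting $V_i$ and avoiding $x_i$ must contain $y_{i+1}$. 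Your supporting claim that ``$y_i$ is above all of $Min(P_i)$'' is also wrong: $y_i<x_i$ and $y_i$ is incomparable to $Min(P_i)\setminus\{x_i\}$; likewise $y_i$ does not have a finite incomparability set, being incomparable to all of $V_{i-1}\setminus\{x_{i-1}\}$, which is typically infinite. The paper instead proves that a maximal antichain of $Q$ meeting $V_i$ either contains $x_i$, in which case Lemma~\ref{lem:minimal-element} forces it to equal $Min(P_i)$, or omits $x_i$, in which case it has the form $B\cup\{y\}$ with $B$ a maximal antichain of $P_i$ avoiding $x_i$ and $y$ the added point sitting just above $x_i$; the map $B\mapsto B\cup\{y\}$ is the order isomorphism yielding $\sum_{i<\kappa}AM(P_i)_\leq$. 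You need this corrected description to finish Step 3.

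One further caution, which affects your argument and also deserves care in the paper's: when $|Min(P_i)|\geq 2$ (which always holds here, a prime poset on at least three points having no least element), the set $\{y'\}\cup\bigl(Min(P_i)\setminus\{x_i\}\bigr)$, where $y'$ is the added point just \emph{below} $x_i$, is itself a maximal antichain of $Q$ lying strictly between the $V_{i-1}$-block and $Min(P_i)$, and it arises from no maximal antichain of $P_i$. These finitely-placed extra points sit at the junctions between blocks; they do not change the Hausdorff rank, which is all that Theorem~\ref{thm:main-rank} requires, but any clean statement of the ``Furthermore'' clause has to account for them.
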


\begin{proof}We prove several claims.

\noindent
\textbf{Claim 1:} \(Q\) is well-quasi-ordered.

The restriction of \(Q\) to \(\cup_{i<\kappa}V_i\) is the linear sum \(\sum_{i<\kappa}P_i\).
Since each \(P_i\) is well-quasi-ordered and \(\kappa\) is well-ordered, this
linear sum is well-quasi-ordered. Moreover, \(\{y_i : 1\leq i<\kappa\}\) is a chain. Hence, every antichain of \(Q\) is contained in \((\cup_{i<\kappa}V_i)\cup\{y_i\}\) for at most one index \(i\). Since each \(P_i\) is well-quasi-ordered, it follows that \(Q\) contains no infinite antichain.

Finally, every descending chain in \(Q\) is eventually contained in some
\(P_i\), because the indices are well-ordered. Since each \(P_i\) is
well-founded, \(Q\) is well-founded. Therefore \(Q\) is well-quasi-ordered.

\medskip

\noindent
\textbf{Claim 2:}\(AM(Q)_\leq\) is isomorphic to  \(\sum_{i<\kappa}AM(P_i)_\leq\) .

Let \(A\in AM(Q)\). Since the order of \(Q\) on \(\cup_{i<\kappa}V_i\) is a linear sum, \(A\) meets at most one set \(V_i\). Let \(i<\kappa\) be the
unique index such that \(A\cap V_i\neq\emptyset\).

We first determine the maximal antichains of \(Q\) intersecting \(V_i\).

Suppose first that \(x_i\in A\). Since \(U(x_i)\) is maximal among the upper sets of minimal elements of
\(P_i\), Lemma~\ref{lem:minimal-element} implies that \(\min(P_i)\) is the unique maximal antichain of \(P_i\) containing \(x_i\). Since every
\(y_j\) is comparable with \(x_i\), it follows that \(A=\min(P_i)\).

Now suppose that \(x_i\notin A\). Then \(A\cap V_i\) is a maximal antichain of \(P_i\) not containing \(x_i\).
Since \(x_i\) is the unique element of \(P_i\) comparable with \(y_{i+1}\), the set \((A\cap V_i)\cup\{y_{i+1}\}\) is a maximal antichain of \(Q\).

Conversely, every maximal antichain of \(P_i\) not containing \(x_i\) gives rise to a maximal antichain of \(Q\) by adjoining \(y_{i+1}\).

Define
\[
f:AM(Q)_\leq \longrightarrow \sum_{i<\kappa}AM(P_i)_\leq
\]
by
\[
f(A)=
\begin{cases}
\min(P_i), & \text{if }A=\min(P_i),\\[1ex]
A\setminus\{y_{i+1}\}, & \text{otherwise}.
\end{cases}
\]

The preceding discussion shows that \(f\) is well defined and bijective. Moreover, the order relation between maximal antichains is preserved because
the order on \(Q\) between different \(P_i\)'s is linear. Hence, \(f\) is an isomorphism.

\medskip

\noindent
\textbf{Claim 3:} \(Q\) is prime.

Suppose, toward a contradiction, that \(Q\) has a nontrivial module \(M\).

Since the restriction of
\(\leq_Q\) to \(\cup_{i<\kappa}V_i\) is the linear sum
\(
\sum_{i<\kappa}P_i,
\)
Lemma~\ref{lem:module-linearsum} implies that
\[
M\cap(\cup_{i<\kappa}V_i)
=
\cup_{j\in J}V_j
\]
for some interval \(J\subseteq \kappa\).

We first show that if \(V_j\subseteq M\), then \(y_j\in M\). Indeed, \(y_j\) is comparable with exactly one element of \(V_j\), namely
\(x_j\). If \(y_j\notin M\), then \(y_j\) distinguishes \(x_j\) from the other
elements of \(V_j\subseteq M\), contradicting that \(M\) is a module.

Next we show that \(J=\kappa\). Suppose \(J\neq\kappa\).

If \(J\) has a largest element \(\nu\), then
\(
V_\nu\subseteq M\) and \(V_{\nu+1}\cap M=\emptyset.
\)
Since \(y_\nu\in M\), choose
\(
z\in V_{\nu+1}\setminus\{x_{\nu+1}\}.
\)
Then \(z\) is above every element of \(V_\nu\), but incomparable with
\(y_\nu\). Hence, \(z\) distinguishes elements of \(M\), contradiction.

Similarly, if \(J\) has a smallest element \(\varepsilon>0\), then
\(V_{\varepsilon-1}\cap M=\emptyset\) while \(V_\varepsilon\subseteq M\). Since
\(y_{\varepsilon-1}\) is comparable with \(x_\varepsilon\) but incomparable
with all other elements of \(V_\varepsilon\), the module property again fails.

Therefore \(J=\kappa\). Since \(y_j\in M\) whenever \(V_j\subseteq M\), we get \(M=V\).
Thus, \(Q\) is prime.

\medskip

\noindent
\textbf{Claim 4:} \(Q\) is an interval order.

Suppose, toward a contradiction, that \(Q\) contains an induced
\(2\oplus2\), say \(a<b\) and \(c<d,\) with all other pairs incomparable.

Since
\(
Q\setminus\{y_i:1\leq i<\kappa\}
=
\sum_{i<\kappa}P_i
\)
is an interval order, an induced \(2\oplus2\) in \(Q\) cannot be contained
entirely in
\(
Q\setminus\{y_i : i+1<\kappa\}.
\)
Hence, at least one of the four elements belongs to
\[
Y:=\{y_i : 1\leq i<\kappa\}.
\]

We claim that at most one of the four elements belongs to \(Y\). Suppose, by
way of contradiction, that \(y_i,y_j\in Y\) with \(i<j\). Let \(z\in Q\). If
\(z\in P_r\) with \(r\leq i\), then \(z<y_j\). If \(z\in P_r\) with
\(r\geq j+1\), then \(y_i<z\). Finally, if \(i+1\leq r\leq j\), then \(z\) is
comparable with both \(y_i\) and \(y_j\). Thus, every element of \(Q\) is
comparable with at least one of \(y_i\) and \(y_j\). Therefore an induced \(2\oplus2\) cannot contain both \(y_i\) and
\(y_j\). This contradiction proves that at most one of the four elements
belongs to \(Y\).

Hence, exactly one of the four elements belongs to \(Y\).

Now every element outside \(V_j\cup V_{j+1}\cup\{y_j\}\)
is comparable with \(y_j\). Therefore
\[
\{a,b,c,d\}\setminus\{y_j\}
\subseteq
V_j\cup V_{j+1}.
\]

Because the only element of \(V_{j+1}\) below \(y_j\) is \(x_{j+1}\), and all
elements of \(V_j\setminus\min(P_j)\) are above \(x_j\), a direct inspection
shows that no induced \(2\oplus2\) can occur. This contradiction proves that
\(Q\) is an interval order.
\end{proof}

With this lemma, we can now prove Theorem~\ref{thm:main-rank} by induction on
the Hausdorff rank of the chain of maximal antichains.

\begin{proof}
We prove, by transfinite induction on \(\alpha\), that for every well-founded
countable scattered chain \(C\) of Hausdorff rank \(\alpha\), there exists a
well-quasi-ordered prime interval order \(P\) such that \(AM(P)_\leq\) is isomorphic to \(C\).

Suppose first that \(\alpha=0\). Then \(C\) is either finite or isomorphic to
\(\omega\). If \(C\) has \(m\) elements, then the ordered set \(I_{m+1}\) is a
well-quasi-ordered prime interval order and \(AM(I_{m+1})_\leq\) is isomorphic to \(C\).

If \(C\) is isomorphic to \(\omega\), then \(I_{\NN}\) is a well-quasi-ordered prime interval
order and \(AM(I_{\NN})_\leq\) is isomorphic to \(\omega\).

Now let \(\alpha>0\), and suppose the statement has been proved for all
Hausdorff ranks strictly smaller than \(\alpha\). Let \(C\) be a well-founded
countable scattered chain of Hausdorff rank \(\alpha\).

By Theorem~\ref{thm:countable-hausdorff-rank-scattered}, the chain \(C\) is
either a finite linear sum or an \(\omega\)-sum of chains of strictly smaller
Hausdorff rank. Since \(C\) is well-founded, the \(\omega^*\)-sum case cannot
occur. Thus, we may write \(C=\sum_{i<\kappa} C_i\), where \(\kappa\) is a finite integer at least \(2\), or \(\kappa=\omega\), and
each \(C_i\) has Hausdorff rank strictly smaller than \(\alpha\).

By the induction hypothesis, for every \(i<\kappa\), there exists a
well-quasi-ordered prime interval order \(P_i\) such that
\(AM(P_i)_\leq\) is isomorphic to \(C_i\). If necessary, replace \(P_i\) by an isomorphic copy so that the vertex sets are
pairwise disjoint. Applying Lemma~\ref{lem:basi-construction} to the sequence
\((P_i)_{i<\kappa}\), we obtain a well-quasi-ordered prime interval order
\(Q\) such that \(AM(Q)_\leq\) is isomorphic to \(\sum_{i<\kappa} AM(P_i)_\leq\) which is in turn isomorphic to \(\sum_{i<\kappa} C_i=C\).

This completes the induction and proves the theorem.
\end{proof}

\section*{Acknowledgements}

The authors would like to thank the anonymous referees for their helpful
comments and suggestions, which improved the presentation of the paper. In
particular, we would like to thank one referee for suggesting an alternative
proof of part of Theorem~\ref{thm:prime-scattered} and for helping simplify
parts of the proof of Lemma~\ref{lem:basi-construction}.

%\section{Declarations}
%
%
%\noindent \textbf{Ethical approval:} We confirm that we have given due consideration to the protection of intellectual property associated with this work and that there are no impediments to publication, including the timing of publication, with respect to intellectual property. In so doing we confirm that we have followed the regulations of our institutions concerning intellectual property.  We further confirm that no aspect of the work covered in this manuscript has involved either experimental animals or human patients has been conducted.
%
%\noindent \textbf{Funding:} The first author did not receive any funding for this research. The second author received funding from the Canadian Defence Academy Research Program and NSERC.

%\noindent \textbf{Conflict of interest:} We wish to confirm that there are no known conflicts of interest associated with this publication and there has been no significant financial support for this work that could have influenced its outcome.
%
%\noindent \textbf{Authorship and Contribution:} We confirm that there are no other persons who satisfied the criteria for authorship but are not listed. We also confirm that the manuscript was written with input from all authors.%We further confirm that the order of authors listed in the manuscript has been approved by all of us.

%\noindent \textbf{Availability of data and materials:} Not applicable.

\end{document}